\newtheorem{lemma}{Lemma}[section]
\newtheorem{theorem}{Theorem}
\newtheorem{question}{Question}
\newtheorem{remark}{Remark}[section]
\newtheorem{proposition}{Proposition}[section]
\newtheorem{definition}{Definition}[section]
\def\<{\langle}
\def\>{\rangle}
\def\to{\rightarrow}
\begin{document}

\title{On large girth regular graphs and random processes on trees}
\author{{\'Agnes Backhausz   ~~ Bal\'azs Szegedy} \\  \textsc{\footnotesize MTA Alfr\'ed R\'enyi Institute of Mathematics.} {\footnotesize Re\'altanoda utca 13--15., Budapest, Hungary, H-1053}\\ \texttt{\footnotesize backhausz.agnes@renyi.mta.hu, szegedy.balazs@renyi.mta.hu}}

\maketitle

\abstract{We study various classes of random processes defined on the regular tree $T_d$ that are invariant under the automorphism group of $T_d$. Most important ones are factor of i.i.d. processes (randomized local algorithms), branching Markov chains and a new class that we call typical processes. Using Glauber dynamics on processes we give a sufficient condition for a branching Markov chain to be factor of i.i.d. Typical processes are defined in a way that they create a correspondence principle between random $d$-reguar graphs and ergodic theory on $T_d$. Using this correspondence principle together with entropy inequalities for typical processes, we prove that there are no approximative covering maps from random $d$-regular graphs to $d$-regular weighted graphs.}
\bigskip

\begin{small}

\noindent\textbf{Keywords.} { Entropy, factor of i.i.d., Glauber dynamics, graphing, local algorithm, local-global convergence, random $d$-regular graph.   }

\end{small}

\section{Introduction} 

F\"urstenberg's correspondence principle creates a fruitful link between finite combinatorics and ergodic theory. It connects additive combinatorics with the study of shift invariant measures on the Cantor set $\{0,1\}^\mathbb{Z}$. In particular it leads to various strengthenings and generalizations of Szemer\'edi's celebrated theorem on arithmetic progressions. 

The goal of this paper is to study a similar correspondence principle between finite large girth $d$-regular graphs and ${\rm Aut}(T_d)$ invariant probability measures on $F^{V(T_d)}$ where $F$ is a finite set and $T_d$ is the $d$-regular tree with vertex set $V(T_d)$. The case $d=2$ is basically classical ergodic theory however the case $d\geq 3$ is much less developed. 

Our approach can be summarized as follows. Assume that $G$ is a $d$-regular graph of girth $g$. We think of $d$ as a fixed number (say $10$) and $g$ as something very large. We wish to scan the large scale structure of $G$ in the following way. We put a coloring $f:V(G)\rightarrow F$ on the vertices of $G$ with values in a finite set $F$. (It does not have to be a proper coloring i.e. neighboring vertices can have identical color.) Then we look at the colored neighborhoods (of bounded radius) of randomly chosen points $v\in V(G)$. By this sampling we obtain a probability distribution on $F$-colored (bounded) trees that carries valuable information on the global structure of $G$. For example, if there is a coloring $f:V(G)\rightarrow\{0,1\}$ such that, with high probability, a random vertex $v$ has a color different from its neighbours, then $G$ is essentially bipartite.  

It turns out to be very convenient to regard the information obtained from a specific coloring as an approximation of a probability measure on $F^{V(T_d)}$ that is invariant under ${\rm Aut}(T_d)$. This can be made precise by using Benjamini--Schramm limits of colored graphs (see Section \ref{invproc}, or \cite{bs} for the original formulation). We will use the following definition.

\begin{definition} Let $\mathcal{S}=\{G_i\}_{i=1}^\infty$ be a sequence of $d$-regular graphs. We say that $\mathcal{S}$ is a large girth sequence if for every $\varepsilon>0$ there is an index $n$ such that for every $i\geq n$ the probability that a random vertex in $G_i$ is contained in a cycle of length at most $\lceil 1/\varepsilon\rceil$ is at most $\varepsilon$.
\end{definition}

\begin{definition}\label{profile} Let $\mathcal{S}=\{G_i\}_{i=1}^\infty$ be a large girth sequence of $d$-regular graphs, and $F$ a finite set. We denote by $[\mathcal{S}]_F$ the set of ${\rm Aut}(T_d)$ invariant probability measures on $F^{V(T_d)}$ that arise as Benjamini--Schramm limits of $F$-colorings $\{f_i:V(G_i)\rightarrow F\}_{i=1}^\infty$ of $\mathcal{S}$. We denote by $[\mathcal{S}]$ the set $\bigcup_{n\in\mathbb{N}}[\mathcal{S}]_{\{1,2,\dots,n\}}$.
\end{definition}

 It is clear that if $\mathcal S'$ is a subsequence of $\mathcal S$, then $[\mathcal S]\subseteq [\mathcal S']$. If $[\mathcal{S}]=[\mathcal S']$ holds for every subsequence $S'$ of $S$, then $\mathcal{S}$ is called {\it local-global convergent} (see Subsection \ref{corresp} and \cite{HLSz}). Local-global convergent sequences of graphs have limit objects in the form of a {\it graphing} \cite{HLSz}. For a convergent sequence $\mathcal S$ the set $[\mathcal{S}]$ carries important information on the structure of the graphs in $\mathcal{S}$.

We call a process $\mu$  {\it universal} if $\mu\in[\mathcal{S}]$ for every large girth sequence $\mathcal{S}$.  Universality means, roughly speaking, that it defines a structure that is universally present in every large girth $d$-regular graph.
  Weakening the notion of universality, we call a process $\mu$ {\it typical} if $\mu\in [\{\mathbb G_{n_i}\}_{i=1}^\infty]$ holds with probability 1 for some fixed sequence $\{n_i\}_{i=1}^{\infty}$,  where $\{\mathbb G_{n_i}\}_{i=1}^{\infty}$ is a sequence of independently and uniformly chosen random $d$-regular graphs  with $|V(\mathbb G_{n_i})|=n_i$.   We will see that understanding typical processes is basically equivalent with understanding the large scale structure of random $d$-regular graphs. More precisely, we will formulate a correspondence principle (see Subsection \ref{corresp}) between the properties of random $d$-regular graphs and typical processes.  

\medskip

Among universal processes, factor of i.i.d processes on $T_d$ (see \cite{russ} and the references therein) have a distinguished role because of their close connection to local algorithms \cite{gamarnik, HLSz, kungabor}. They can be used to give estimates for various structures (such as large independent sets \cite{csoka, harangi, hoppen, mustazee}, matchings \cite{csokalipp, nazarov}, subgraphs of large girth \cite{damien, kungabor}, etc., see also \cite{goldberg}) in $d$-regular graphs. On the other hand, \cite{cordec} characterizes the  covariance structure of weak limits of factor of i.i.d. processes and thus it gives a necessary condition for a process to be factor of i.i.d. However, there are only few general and widely applicable   sufficient conditions. This is a difficult question even for branching Markov processes that are important in statistical physics (e.g. Ising model, Potts model). In Section \ref{glauber} we give a Dobsrushin-type sufficient condition for a branching Markov chain to be factor of i.i.d. 
We use standard methods from statistical physics, in particular, a heat-bath version of Glauber dynamics. The idea behind this goes back to Ornstein and Weiss: sufficient conditions for fast mixing of Glauber dynamics  often imply that the process is factor of i.i.d. See also the paper of H\"aggstr\"om, Jonasson and Lyons \cite{russregi}.
We will see that the necessary condition on the covariance structure given in \cite{cordec} is not sufficient for a branching Markov chain to be factor of i.i.d.  To show this, we use  our necessary conditions for typical processes (Section \ref{entropy}), which automatically apply for factor of i.i.d. processes.

\medskip

Our paper is built up as follows. In the first part we summarize various known and new facts about factor of i.i.d, universal and typical processes, local-global convergence and graphings. Moreover, in this part, we formulate our correspondence principle between typical processes and random $d$-regular graphs. In Section \ref{glauber} we focus more on branching Markov chains on $T_d$. We give a Dobrushin-type sufficient condition for a branching Markov chain to be factor of i.i.d.  In the last part (Section \ref{entropy}) we give necessary conditions for a process to be typical using joint entropy functions. We will see that this result implies necessary conditions on the large scale structure of random $d$-regular graphs. (Note that our entropy method is closely related to the F-invariant, introduced by Lewis Bowen \cite{lewis} in ergodic theory, and also to the ideas developed by Molloy and Reed \cite{molloyreed} to study random $d$-regular graphs in combinatorics.) In particular, we prove  that the value distributions of eigenvectors of random $d$-regular graphs can not be concentrated around boundedly many values (this is even true for approximative eigenvectors). Moreover, we show that random $d$-regular graphs do not cover bounded $d$-regular weighted graphs (for precise formulation, see Theorem \ref{thm:combap}).  These  results are closely related to the papers of Molloy and Reed \cite{molloyreed} about dominating ratio and Bollob\'as \cite{bollind} about independence numbers.

\section{Invariant processes}\label{invproc}

Let $T_d$ be the (infinite) $d$-regular tree with vertex set $V(T_d)$ and edge set $E(T_d)$. 
Let $M$ be a topological space. We denote by $I_d(M)$ the set of $M$-valued random processes on the $d$-regular tree 
$T_d$ that are invariant under automorphisms of $T_d$. More precisely, $I_d(M)$ is the set of ${\rm Aut}(T_d)$ 
invariant Borel probability measures on the space $M^{V(T_d)}$. (If $\Psi\in {\rm Aut}(T_d)$, then $\Psi$ induces 
a map naturally from $M^{V(T_d)}$ to itself: given a labelling of the vertices of $T_d$, the new label of a vertex is 
the label of its inverse image at $\Psi$. The probability measures should be invariant with respect to this induced map.) 
The set $I_d(M)$ possesses a topological structure; namely the restriction of the weak topology for probability measures on $M^{V(T_d)}$ to $I_d(M)$. Note that most of the time in this paper $M$ is a finite set. We denote by $I_d$ the set of invariant processes on $T_d$ with finitely many values.

Let $T_d^*$ denote the rooted $d$-regular tree: it is $T_d$ with a distinguished vertex $o$, which is called the root. 
Let $N$ be a topological space and $f:M^{V(T_d^*)}\rightarrow N$ be a Borel measurable function that is invariant under 
${\rm Aut}(T_d^*)$, which is the set of root-preserving automorphisms of $T_d^*$. For every $\mu\in I_d(M)$ the function $f$ defines a new process $\nu \in I_d(N)$ by evaluating 
$f$ simultaneously at every vertex $v$ (by placing the root on $v$) on a $\mu$-random element in $M^{V(T_d)}$.
We say that $\nu$ is a {\it factor} of $\mu$. 

A possible way to get processes in $I_d$ goes through Benjamini--Schramm limits. For the general definition see \cite{bs}. We will use and formulate it for colored large-girth graph sequences, as follows. Let $F$ be a finite set. Assume that $\{G_i\}_{i=1}^\infty$ is a large girth sequence of $d$-regular graphs. Let $\{f_i:V(G_i)\rightarrow F\}_{i=1}^\infty$ be a sequence of colorings of $G_i$. For every pair of numbers $r,i\in\mathbb{N}$ we define the probability distribution $\mu_{r,i}$ concentrated on rooted $F$-colored finite  graphs as follows. We pick a random vertex $v\in V(G_i)$ and then we look at the neighborhood $N_r(v)$ of radius $r$ of $v$ (rooted by $v$) together with the coloring $f_i$ restricted to $N_r(v)$. The colored graphs $(G_i,f_i)$ are Benjamini--Schramm convergent if  for every $r\in\mathbb{N}$ the sequence $\{\mu_{r,i}\}_{i=1}^\infty$ weakly converges to some measure $\mu_r$. The limit object is the probability measure $\mu$ on  $F^{V(T_d^*)}$ with the property that the marginal of $\mu$ in the neighborhood of radius $r$ of the root is $\mu_r$. It is easy to see that the measure we get from $\mu$ by forgetting the root is in $I_d(F)$. 

We list various classes of invariant processes on $T_d$ that are related to large girth sequences of finite graphs. 

\bigskip

\noindent{\bf Factor of i.i.d. processes:}~Let $\mu\in I_d([0,1])$ be the uniform distribution on $[0,1]^{V(T_d)}$, which is 
the product measure of the uniform distributions on the interval $[0,1]$. A {\it factor of i.i.d. process is a factor of the process $\mu$}. Let $F_d$ denote the set of such processes in $I_d$. 
See Lemma \ref{ebred} for an easy example producing independent sets as factor of i.i.d. processes.

\bigskip

\noindent{\bf Local processes:}~We say that a process is {\it local} if it is in the closure of factor of i.i.d 
processes in the weak topology. Let $L_d$ denote the set of such processes in $I_d$.

\bigskip

\noindent{\bf Universal processes:}~A process $\mu\in I_d$ is called universal if $\mu\in [\mathcal{S}]$ holds for every large girth sequence $\mathcal{S}$ of $d$-regular graphs.  We denote the set of such processes by $U_d$. 

\bigskip

\noindent{\bf Typical processes:}~A process  $\mu\in I_d$ is called typical if $\mu\in [\{\mathbb G_{n_i}\}_{i=1}^\infty]$ holds with probability 1 for some fixed sequence $\{n_i\}_{i=1}^{\infty}$,  where $\{\mathbb G_{n_i}\}_{i=1}^{\infty}$ is a sequence of independently chosen uniform random $d$-regular graphs  with $|V(\mathbb G_{n_i})|=n_i$. We denote the set of typical processes by $R_d$. 

\bigskip

\begin{lemma} \label{lem:bovul}We have the follwing containments:

$$F_d\subseteq L_d\subseteq U_d\subseteq R_d.$$

\end{lemma}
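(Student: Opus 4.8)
The plan is to establish the three containments one at a time, handling the two easy ones first and concentrating the real work on the middle inclusion. The first containment $F_d\subseteq L_d$ is immediate from the definitions: $L_d$ is by construction the weak closure of $F_d$, and every set lies inside its own closure. The last containment $U_d\subseteq R_d$ reduces to a single structural fact about random regular graphs, so I would dispatch it next. I would show that a sequence $\{\mathbb G_{n_i}\}_{i=1}^\infty$ of independent uniform random $d$-regular graphs with $n_i\to\infty$ is, with probability one, a large girth sequence. For this I would use the classical estimate that the expected number of cycles of length at most $L$ in $\mathbb G_n$ is bounded independently of $n$; hence the expected fraction of vertices lying on such a cycle is $O(1/n)$, and Markov's inequality together with Borel--Cantelli (using independence across $i$ and choosing $n_i$ to grow fast enough) shows that for each fixed $L$ this fraction tends to $0$ almost surely. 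A diagonalization over the countably many values of $L$ yields the large girth property almost surely. Since $\mu\in U_d$ means $\mu\in[\mathcal S]$ for \emph{every} large girth sequence $\mathcal S$, on the full-measure event that $\{\mathbb G_{n_i}\}$ is large girth we get $\mu\in[\{\mathbb G_{n_i}\}]$, i.e. $\mu\in R_d$.

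The substantial containment is $L_d\subseteq U_d$. I would split it into (i) $F_d\subseteq U_d$ and (ii) $U_d$ is closed in the weak topology; together these give $L_d=\overline{F_d}\subseteq\overline{U_d}=U_d$. For (ii) I would argue that for each fixed finite $F$ the set $[\mathcal S]_F$ is closed: the space $I_d(F)$ is compact, and a diagonal argument over radii allows one to realize any weak limit of Benjamini--Schramm limits as itself a Benjamini--Schramm limit of colorings. Consequently each $[\mathcal S]$ is closed, and $U_d=\bigcap_{\mathcal S}[\mathcal S]$ is an intersection of closed sets, hence closed.

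For (i), let $\mu=f_*(\lambda)$ be a factor of the i.i.d. process $\lambda$ on $[0,1]^{V(T_d)}$ (taking finitely many values in some finite set $F$), and fix a large girth sequence $\mathcal S=\{G_i\}$. Since $f$ is measurable, I would first approximate it by a \emph{local} rule $f_r$ depending only on the labels in the radius-$r$ ball around the root, so that $\mu_r:=(f_r)_*(\lambda)\to\mu$ weakly as $r\to\infty$. I would then transfer $f_r$ to each $G_i$: place i.i.d. $[0,1]$ labels on $V(G_i)$ and color each vertex by applying $f_r$ to its labeled radius-$r$ ball. Because the girth grows, for a $1-o(1)$ fraction of vertices this ball is a tree isomorphic to the corresponding ball in $T_d$, so the expected empirical neighborhood statistics converge to those of $\mu_r$. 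Since each label influences only $O(1)$ of the balls, the relevant statistics have bounded differences and therefore concentrate, which lets me replace the random coloring by a deterministic one realizing $\mu_r$ as the Benjamini--Schramm limit along $i$; hence $\mu_r\in[\mathcal S]$. Letting $r\to\infty$ and invoking the closedness from (ii) gives $\mu\in[\mathcal S]$, and since $\mathcal S$ was arbitrary, $\mu\in U_d$.

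I expect the main obstacle to be the quantitative transfer step in (i): one must simultaneously control the (rare) vertices whose radius-$r$ neighborhood fails to be a tree and prove concentration of the neighborhood statistics of the locally defined random coloring sharp enough to extract deterministic colorings with the correct Benjamini--Schramm limit. By contrast, the closedness argument in (ii) and the diagonalizations are routine once the right compactness and a metric on $I_d(F)$ are fixed, and the large girth estimate underlying $U_d\subseteq R_d$ is standard.
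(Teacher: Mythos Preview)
Your proposal is correct and in fact considerably more detailed than the paper's own proof, which simply declares the first and last containments ``trivial'' and defers the middle one, $L_d\subseteq U_d$, to the reference \cite{HLSz}. Your sketch of $F_d\subseteq U_d$ via block-factor approximation, transfer of the local rule to $G_i$, and concentration of neighborhood statistics is exactly the standard argument (and presumably what \cite{HLSz} contains), and your closedness argument for each $[\mathcal S]_F$ by diagonalization is the right way to complete $L_d\subseteq U_d$. Two minor remarks: in the $U_d\subseteq R_d$ step independence is not actually needed for the Borel--Cantelli direction you use, and for closedness one should work within a fixed finite alphabet $F$ (so that $U_d\cap I_d(F)=\bigcap_{\mathcal S}[\mathcal S]_F$ is an intersection of closed sets), which is all that is required here.
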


\begin{proof} The first and last containments are trivial. The containment $L_d\subseteq U_d$ is easy to see. For a proof we refer to \cite{HLSz} where a much stronger theorem is proved. \hfill $\square$
\end{proof}

\medskip

We also know by recent results of Gamarnik and Sudan \cite{gamarnik} and Rahman and Vir\'ag \cite{mustazee} that $L_d\neq R_d$ for 
sufficiently large $d$. Their result implies that the indicator function of a maximal independent set 
(a set of vertices that does not contain any neighbors)  in a random $d$-regular graph is not in $L_d$ (that is, the largest independent set can not be approximated with 
factor of i.i.d. processes); on the other hand, it is in $R_d$.

It is sometimes useful to consider variants of $F_d,L_d,U_d$ and $R_d$ where the values are in an infinite topological space $N$. The definitions can be easily modified using the extension of Benjamini--Schramm limits to colored graphs where the colors are in a topological space. We denote by $F_d(N),L_d(N),U_d(N)$ and $R_d(N)$ the corresponding set of processes.
Using this notation, it was proved in \cite{harangi} that $F_d(\mathbb{R})\neq L_d(\mathbb{R})$. In that paper Harangi and Vir\'ag used random Gaussian wave functions \cite{wave} to show this.  
  See also Corollary 3.3. in the paper of Lyons 
\cite{russ}: it provides a discrete-valued example for a process in $L_d(\lbrace 0,1\rbrace)\setminus U_d(\lbrace 0,1\rbrace)$.
\medskip

The following question remains after these results.

\begin{question} Is it true that $U_d=L_d$?~ Is it true that $U_d=R_d$? 
\end{question}

\medskip

It is an important goal of this paper to give sufficient conditions (for particular models) and necessary conditions for processes to be in one of  the above classes.
A recent result \cite{cordec} in this direction is the following.

\begin{theorem}\label{thmcordec} Let $\mu\in L_d(\mathbb{R})$ and let $v,w\in V(T_d)$ be two vertices of distance $k$. Let $f:T_d\rightarrow\mathbb{R}$ be a $\mu$-random function. Then the correlation of $f(v)$ and $f(w)$ is at most $(k+1-2k/d)(d-1)^{-k/2}$. 
\end{theorem}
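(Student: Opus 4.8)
The plan is to pass to the covariance structure of the process and analyze it through spherical harmonic analysis on $T_d$. After normalizing so that $f$ has mean $0$ and variance $1$, write $c_k$ for the correlation of $f(v)$ and $f(w)$ when $v,w$ are at distance $k$; by ${\rm Aut}(T_d)$-invariance this depends only on $k$ and defines a radial positive-definite function on the tree. The Plancherel/Bochner theory for $T_d$ represents every such function as a superposition of the spherical functions $\phi_k(\lambda)$ — the radial eigenfunctions of the adjacency operator $A$ normalized by $\phi_0=1$ — against a finite positive measure, so that $c_k=\int \phi_k(\lambda)\,d\nu(\lambda)$, and $c_0=1$ forces $\nu$ to be a probability measure. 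The first step is to set up this representation and record the recursion that the $\phi_k$ satisfy.

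The decisive step is to show that for a local process the representing measure $\nu$ is supported on the $\ell^2$-spectrum $[-2\sqrt{d-1},2\sqrt{d-1}]$ of $A$ (the Kesten--McKay interval), rather than on the whole range allowed for a general positive-definite radial function. I would prove this first for block factors: if $f$ depends only on the i.i.d.\ labels in a ball around the root, decompose $f$ into its orthogonal Hoeffding/chaos components, indexed by the finite vertex sets on which they genuinely depend. Orthogonality kills all cross terms except those whose supports match after translation, and the resulting expression for $(c_k)$ exhibits the covariance as subordinate to the adjacency operator acting on $\ell^2(T_d)$; since its operator norm is $2\sqrt{d-1}$, the spectral measure is confined to $[-2\sqrt{d-1},2\sqrt{d-1}]$. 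One then extends from $F_d(\mathbb{R})$ to its weak closure $L_d(\mathbb{R})$ by noting that the correlations are continuous under the relevant limits and that the class of radial positive-definite functions with spectral support in a fixed compact interval is weakly closed. I expect this support statement to be the main obstacle: everything else is either soft functional analysis or an elementary estimate, whereas pinning the spectrum to the $\ell^2$-range is exactly where the factor-of-i.i.d.\ hypothesis must be used.

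Granting the support statement, the conclusion follows by bounding the spherical functions uniformly on the spectrum. Solving the recursion $(d-1)\phi_{k+1}=\lambda\phi_k-\phi_{k-1}$ with $\phi_0=1$ and $\phi_1=\lambda/d$, and writing $\lambda=2\sqrt{d-1}\cos\theta$ for $\lambda\in[-2\sqrt{d-1},2\sqrt{d-1}]$, gives the closed form
$$\phi_k(\theta)=(d-1)^{-k/2}\left(\cos k\theta+\tfrac{d-2}{d}\,\cot\theta\,\sin k\theta\right).$$
Using $|\cos k\theta|\le 1$ together with $|\sin k\theta|\le k\,|\sin\theta|$ (so that $|\cot\theta\,\sin k\theta|\le k$), one obtains $|\phi_k(\theta)|\le (d-1)^{-k/2}\bigl(1+\tfrac{d-2}{d}k\bigr)=(k+1-2k/d)(d-1)^{-k/2}$ for every $\theta$. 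Finally, since $\nu$ is a probability measure supported where this pointwise bound is valid,
$$|c_k|=\left|\int \phi_k(\lambda)\,d\nu(\lambda)\right|\le \max_{\lambda\in[-2\sqrt{d-1},\,2\sqrt{d-1}]}|\phi_k(\lambda)|\le (k+1-2k/d)(d-1)^{-k/2},$$
which is the claimed inequality. It is worth noting that this bound is sharp: the maximum is approached as $\theta\to 0$, i.e.\ at the edge $\lambda=2\sqrt{d-1}$ of the spectrum, where $\phi_k$ equals $(k+1-2k/d)(d-1)^{-k/2}$.
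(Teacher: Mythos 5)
The paper offers no proof of this theorem --- it is imported verbatim from \cite{cordec} --- but your argument is essentially the one given in that reference: represent the radial covariance as $\int\phi_k\,d\nu$ against the spherical functions, use the factor-of-i.i.d.\ structure (via the Efron--Stein/chaos decomposition, which realizes the covariance operator as $G^*G$ for a bounded local map $G$ on $\ell^2(V(T_d))$, hence as a positive radial element of the von Neumann algebra of the adjacency operator) to confine $\nu$ to the Kesten spectrum $[-2\sqrt{d-1},2\sqrt{d-1}]$, and then bound $\phi_k$ pointwise there; your closed form for $\phi_k$ satisfies the recursion and the initial conditions, and the estimate $|\cos k\theta|+\tfrac{d-2}{d}|\cot\theta\sin k\theta|\le 1+\tfrac{d-2}{d}k$ gives exactly $(k+1-2k/d)(d-1)^{-k/2}$. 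The two places where your sketch hides real work are the ones you yourself flag --- turning ``subordinate to the adjacency operator'' into the statement that the radial, bounded, positive covariance operator lies in the algebra generated by $A$ and hence has spectral representation over $\mathrm{spec}(A)$ --- and the passage from $F_d(\mathbb{R})$ to the weak closure $L_d(\mathbb{R})$, where weak convergence of processes does not by itself give convergence of correlations, so one must first prove the bound for bounded continuous post-compositions (which are again factors of i.i.d.) and then approximate in $L^2$.
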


Note that the statement also holds for processes in $R_d$; however the proof of that extension uses the very hard theorem of J. Friedman \cite{friedman} on the second eigenvalue of random $d$-regular graphs. There are various examples showing that the condition of Theorem \ref{thmcordec} is not sufficient. We also give a family of such examples using branching Markov processes (see Theorem \ref{exnotsuff}).
Branching Markov processes will play an important role in this paper so we give a brief description of them.

\medskip

\noindent{\bf Branching Markov processes:}~Now choose $M$ to be a finite state space $S$ with the 
discrete topology. Let $Q$ be the transition matrix of a reversible Markov chain on the 
state space $S$. Choose the state of the root uniformly at random. Then make random steps according 
to the transition matrix $Q$ to obtain the states of the neighbors of the root. These steps 
are made conditionally independently, given the state of the root. Continue this: given the 
state of a vertex at distance $k$ from the root, choose the states of its neighbors which are 
at distance $k+1$ from the root conditionally independently and according to the transition matrix $Q$.
It is easy to see that reversibility implies that the distribution of the collection of the random variables we get is invariant, 
hence the distribution of the branching Markov process (which will be denoted by $\nu_Q$) is in $I_d(S)$.

In the particular case when there is a fixed probability 
of staying at a given state, and another fixed probability of transition between distinct states, the branching Markov process is identical to  
the Potts model on the tree and for $|S|=2$ we get the Ising model. See e.g. \cite{evans, sly} for the description of the connection of the parameters 
of the two models. 

\medskip

\subsection{Correspondence between typical processes and random $d$-regular graphs}

\label{corresp}

Typical processes might be of interest on their own, being the processes that can be modelled on random $d$-regular graphs. In addition to this, we can go in the other direction. As we will see later, results on typical processes imply statements for random $d$-regular graphs. In the last section, based on entropy estimates we give necessary conditions for an invariant process to be typical. In this section we show how these results can be translated to statements about random $d$-regular graphs. We will present a correspondence principle between these objects. 

\subsubsection{Local-global convergence and metric}

When we want to study the correspondence between typical processes (which are defined on 
the vertex set of the $d$-regular tree) and random $d$-regular graphs, another notion of convergence of bounded 
degree graphs will be useful. In this subsection we briefly resume the concept of local-global convergence (also called colored neighborhood convergence) based on the papers of Bollob\'as and Riordan \cite{BR} (where this notion was introduced) and Hatami, Lov\'asz and Szegedy \cite{HLSz}. 

In the beginning of this section, we defined the notion of local (Benjamini--Schramm) convergence of  bounded degree graphs. However, we need a finer convengence notion that captures more of the global structure than local convergence.
 Recall that if $F$ is a finite set (colors) and $G$ is a finite graph with some $f: V(G)\rightarrow F$ , then by picking a random vertex $v\in V(G)$ and looking at its neighborhood $N_r(v)$ of radius $r$, we get a probability distribution $\mu_{r, G,f}$, which is concentrated on rooted $F$-colored finite graphs. (These distributions are called the local statistics of the coloring $f$.)  
Let $[k]=\lbrace 1, \ldots, k\rbrace$, and we define 
\[Q_{r,G,k}=\lbrace \mu_{r,G,f}\vert f:V(G)\rightarrow [k]\rbrace. \] 

Let $U^{r,k}$ be the set of triples $(H, o, f)$ where $(H, o)$ is a rooted graph of radius at most $r$ and $f: V(H)\rightarrow [k]$ is a coloring of its vertices with (at most) $k$ colors. Let $\mathcal M(U^{r,k})$ be the set of probability measures on $U^{r,k}$. With this notation, we have that $Q_{r, G, k}\subseteq \mathcal M(U^{r,k})$. The space $\mathcal M(U^{r,k})$ is a compact metric space equipped with the total variation 
distance of probability measures: 
\[d_{TV}(\mu, \nu)=\sup_{A\subseteq U^{r,k}}|\mu(A)-\nu(A)|.\]

(Note that we will use an equivalent definition of total variation distance later in this paper.)

\begin{definition}[Local-global convergence, \cite{HLSz}.] A sequence of finite graphs $(G_n)_{n=1}^{\infty}$ with uniform degree bound $d$ is locally-globally convergent if for every $r, k\geq 1$, the sequence $(Q_{r,G_n, k})$ converges in the Hausdorff distance inside the compact metric space $(\mathcal M(U^{r,k}),\, d_{TV})$.
\end{definition}

For every locally-globally convergent sequence $(G_n)$ of bounded degree graphs there is a limit object called graphing such that the sets of local statistics of $G_n$ converge to the local stastics of the limit object; see Theorem 3.2 of \cite{HLSz} for the precise statement, and e.g. \cite{aldous, cordec, gabor} for more about graphings. 

 The following metrization of local-global convergence was defined by Bollob\'as and Riordan \cite{BR}.

\begin{definition}[Colored neighborhood metric, \cite{BR}]
 Let $G, G'$ be finite graphs. Their colored neighborhood distance is the following:
\begin{equation}\label{dcn}d_{CN}(G,G')=\sum_{k=1}^{\infty}\sum_{r=1}^{\infty} 2^{-k-r} d_H(Q_{r,G,k}, Q_{r, G',k}),\end{equation}
where $d_H$ denotes the Hausdorff distance of sets in the compact metric space $(\mathcal M(U^{r,k}),\, d_{TV})$. 
\end{definition}

Let $X_d$ be the set of all finite graphs with maximum degree at most $d$. It  is clear from the definition that every 
sequence in $X_d$ contains a locally-globally convergent subsequence \cite{HLSz}. It follows that the completion 
$\overline {X_d}$ of the metric space $(X_d, d_{CN})$ is a compact metric space. It was proved in \cite{HLSz} that the elements of $X_d$ can be represented by certain measurable graphs called 
graphings. 

\begin{definition} [Graphing, \cite{HLSz}.]Let $\Omega$ be a Polish topological space and let $\nu$ be a probability measure on the Borel sets in $X$. A graphing is a graph $\mathcal G$ on $V(\mathcal G)=\Omega$ with Borel measureable edge set $E(\mathcal G)\subset \Omega\times \Omega$ in which all degrees are at most $d$ and 
\[\int_A e(x, B)d\nu(x)=\int_B e(x, A)d\nu(x)\] 
for all measurable sets $A, B\subset \Omega$, where $e(x, S)$ is the number of edges from 
$x\in\Omega$ to $S\subseteq \Omega$.
\end{definition}
If $\mathcal G$ is graphing, then $Q_{r, \mathcal G, k}$ makes sense with the additional condition that the coloring $f: \Omega\rightarrow [k]$ is measurable. Hence local-global convergence and metric both extend to graphings. 

We will need the following two lemmas about the metric $d_{CN}$. We remark that for sake of simplicity we will use the notion of random $d$-regular graphs with $n$ vertices in the sequel without any restriction on $d$ and $n$. If $d$ and $n$ are both odd, then there are no such graphs. We will formulate the statements such that they trivially hold for the empty set as well.

\begin{lemma} \label{lem:halo}For all $d\geq 1$ and $\varepsilon>0$ there exists $F(\varepsilon)$ such that for all $n\geq 1$ in 
the set of $d$-regular graphs with $n$ vertices endowed with $d_{CN}$ there exist an $\varepsilon$-net of size at most $F(\varepsilon)$. \label{lem:net}
\end{lemma}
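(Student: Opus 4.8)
The metric $d_{CN}$ is a weighted sum $\sum_{k,r\ge1}2^{-k-r}d_H(Q_{r,G,k},Q_{r,G',k})$ in which each Hausdorff term is bounded by $1$ (the diameter of $\mathcal M(U^{r,k})$ in $d_{TV}$), and the weights are summable with $\sum_{k,r\ge1}2^{-k-r}=1$. So the first step is to truncate the tail: I would choose integers $K,R$, depending only on $\varepsilon$, so large that $1-(1-2^{-K})(1-2^{-R})\le\varepsilon/2$, which forces the total contribution of all pairs $(k,r)$ with $k>K$ or $r>R$ to be at most $\varepsilon/2$ for \emph{any} two graphs. This reduces the problem to controlling the finitely many ``head'' terms with $k\le K$ and $r\le R$. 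The essential point, where the degree bound $d$ enters, is that for fixed $r,k$ the set $U^{r,k}$ of rooted $[k]$-colored graphs of radius at most $r$ and degree at most $d$ is a \emph{finite} set whose cardinality depends only on $r,k,d$ and not on $n$; hence $\mathcal M(U^{r,k})$ is a fixed finite-dimensional simplex, a compact metric space wholly independent of $n$.

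Next I would pass to the hyperspace. For each head pair $(r,k)$, the collection of closed subsets of the compact metric space $\mathcal M(U^{r,k})$, equipped with the Hausdorff distance $d_H$, is itself a compact metric space (standard); therefore it admits a finite $\delta$-net $\mathcal N_{r,k}$, with $\delta:=\varepsilon/4$. Crucially $\mathcal N_{r,k}$ is independent of $n$. Now for a $d$-regular graph $G$ on $n$ vertices, each $Q_{r,G,k}$ is a \emph{finite} (hence compact) subset of $\mathcal M(U^{r,k})$, so it has a nearest element in $\mathcal N_{r,k}$. I would declare two such graphs $G,G'$ equivalent when, for every head pair $(r,k)$, they select the same nearest net element. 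The number of equivalence classes is at most $F(\varepsilon):=\prod_{k\le K,\,r\le R}|\mathcal N_{r,k}|$, which is finite and independent of $n$.

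Choosing one representative from each nonempty class yields the desired net: if $G,G'$ lie in the same class, then for every head pair both $Q_{r,G,k}$ and $Q_{r,G',k}$ are within $\delta$ of a common net element, so $d_H(Q_{r,G,k},Q_{r,G',k})\le2\delta$ by the triangle inequality, giving a head contribution at most $2\delta\cdot\sum_{k,r}2^{-k-r}\le2\delta=\varepsilon/2$; together with the tail bound $\varepsilon/2$ this gives $d_{CN}(G,G')\le\varepsilon$. Thus every $d$-regular graph on $n$ vertices is within $\varepsilon$ of its representative, and the representatives form an $\varepsilon$-net of size at most $F(\varepsilon)$.

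\textbf{Main obstacle.} The only real danger is the uniformity in $n$: a single $Q_{r,G,k}$ ranges over roughly $k^{n}$ colorings, so one might fear the net size grows with $n$. The resolution, and the heart of the argument, is that one should never net the colorings themselves but rather the \emph{ambient} hyperspace of closed subsets of the $n$-independent compact space $\mathcal M(U^{r,k})$; its finiteness of scale is guaranteed by the boundedness of $d$. Everything else is the routine tail estimate and a triangle-inequality bookkeeping.
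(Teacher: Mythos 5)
Your proof is correct, but it takes a more hands-on route than the paper. The paper's proof is a two-line appeal to compactness: it uses the fact (asserted just before the lemma, via \cite{HLSz}) that the completion $\overline{X_d}$ of $(X_d,d_{CN})$ is compact, picks an $\varepsilon/2$-net $N$ of the whole space, and then, for each point of $N$ whose $\varepsilon/2$-ball meets the set of $d$-regular graphs on $n$ vertices, replaces that point by such a graph -- doubling the radius to $\varepsilon$ but keeping the size bound $F(\varepsilon)=|N|$ uniform in $n$. You instead prove the relevant total boundedness from scratch: truncating the weighted sum to a finite head, observing that each $U^{r,k}$ is finite because of the degree bound so that $\mathcal M(U^{r,k})$ is an $n$-independent compact simplex, netting the hyperspace of its closed subsets, and classifying graphs by their nearest net tuples. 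What the paper's argument buys is brevity, at the cost of leaning on the (non-quantitative, cited) compactness of $\overline{X_d}$; what yours buys is a self-contained and in principle quantitative bound on $F(\varepsilon)$, and it sidesteps the paper's ball-meets-the-set projection step since your class representatives are already $n$-vertex graphs. Your closing remark about the ``main obstacle'' correctly identifies the one place a naive argument would fail (netting colorings rather than the ambient hyperspace), and your resolution is sound.
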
 
\begin{proof}Using compactness, we can choose an $\varepsilon/2$-net $N$ in the space $(\overline {X_d}, d_{CN})$. We show that  $F(\varepsilon):=|N|$ is a good choice. Let $N'$ be the subset of $N$ consisting of points $x$ such that the ball of radius 
$\varepsilon/2$ around $x$ contains a $d$-regular graph with $n$ vertices. To each element in $N'$ we assign a $d$-regular graph with $n$ vertices of distance at most $\varepsilon/2$. It is clear that set of these 
graphs  have the desired properties.   \end{proof}\hfill $\square$

\begin{lemma}\label{lem:lip}
For all $\delta>0$ there exists $i_0$ such that for all $i\geq i_0$ and graphs $G_1, G_2\in X_d$ both on the vertex set  $[i]$ and $|E(G_1)\triangle E(G_2)|=1$ satisfy  
$d_{CN}(G_1, G_2)\leq \delta$.
\end{lemma}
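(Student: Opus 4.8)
The plan is to exploit the geometric weights $2^{-k-r}$ in the definition of $d_{CN}$ in order to reduce the claim to a uniform estimate on finitely many of the Hausdorff distances $d_H(Q_{r,G_1,k},Q_{r,G_2,k})$. Since the total variation distance is always at most $1$, so is each such Hausdorff distance, and $\sum_{k\ge 1}\sum_{r>R}2^{-k-r}=2^{-R}$. I would therefore first fix $R$ with $2^{-R}<\delta/2$; this disposes of all terms with $r>R$ regardless of the two graphs. It then suffices to make $\sum_{k\ge 1}\sum_{r=1}^{R}2^{-k-r}\,d_H(Q_{r,G_1,k},Q_{r,G_2,k})$ smaller than $\delta/2$ by taking $i$ large.

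For the main estimate, the crucial point is that $G_1$ and $G_2$ share the vertex set $[i]$, so any coloring $f\colon[i]\to[k]$ can be used on both graphs simultaneously. I would compare $\mu_{r,G_1,f}$ and $\mu_{r,G_2,f}$ through the natural coupling that selects the same uniform random root $v\in[i]$ for both. Writing $e$ for the single edge of $E(G_1)\triangle E(G_2)$, with endpoints $a,b$, the rooted colored neighborhoods $(N_r(v),v,f)$ computed in $G_1$ and in $G_2$ coincide unless $v$ lies within distance $r$ of $a$ or of $b$: any shortest path of length at most $r$ out of $v$ that avoids $e$ is a path in both graphs, and $e$ can enter the $r$-ball only when one of $a,b$ is itself within distance $r$ of $v$. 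Because every graph in $X_d$ has maximum degree at most $d$, the number of such roots $v$ is at most a constant $C_{d,r}$ depending only on $d$ and $r$. Hence at most $C_{d,r}$ of the $i$ equally likely roots yield differing outputs, so $d_{TV}(\mu_{r,G_1,f},\mu_{r,G_2,f})\le C_{d,r}/i$.

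Since the same coloring realizes the comparison in both directions, this gives $d_H(Q_{r,G_1,k},Q_{r,G_2,k})\le C_{d,r}/i$, a bound independent of $k$. Substituting into the truncated sum yields $\sum_{k\ge 1}\sum_{r=1}^{R}2^{-k-r}\,d_H\le \tfrac{1}{i}\sum_{r=1}^{R}2^{-r}C_{d,r}=:M_{d,R}/i$, a constant divided by $i$, and choosing $i_0$ with $M_{d,R}/i_0<\delta/2$ finishes the proof, with $i_0$ depending only on $\delta$ and $d$. I expect the only step requiring genuine care to be the verification that a single edge change perturbs the $r$-neighborhood of only boundedly many roots; this is the heart of the Lipschitz phenomenon and rests squarely on the degree bound, whereas the geometric truncation and the coupling bound on total variation are routine. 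The boundedness of $C_{d,r}$ and the degenerate conventions for when no $d$-regular graph on $[i]$ exists are then immediate.
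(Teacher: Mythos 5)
Your proof is correct and follows essentially the same route as the paper's: truncate the weighted sum using the geometric weights, then bound each remaining term by observing that for a fixed coloring $f$ used on both graphs, the single changed edge alters the rooted colored $r$-neighborhood of only boundedly many roots (the paper's explicit constant is $2(d+1)^r$, your $C_{d,r}$), giving $d_{TV}(\mu_{r,G_1,f},\mu_{r,G_2,f})\leq C_{d,r}/i$ and hence a Hausdorff bound independent of $k$. Your write-up is if anything slightly more careful than the paper's about the truncation step and about why the total-variation bound transfers to a Hausdorff bound in both directions.
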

\begin{proof} 
Since the sum of the weights is finite in \eqref{dcn}, and the all the Hausdorff distances are at most 1, it is enough to prove the statement for a single term. Let us fix $k$ and $r$. Let  $\mu_{r,G_1,f}\in Q_{r, G_1, k}$ be an arbitrary element corresponding to a coloring $f: [i]\rightarrow [k]$. It is enough to prove that the 
total variation distance of $\mu_{r,G_1,f}$ and $\mu_{r,G_2,f}$ can be bounded from above by a quantity depending only on $i$ and tending to zero as $i$ goes to $\infty$. Let $e$ be the only edge in $E(G_1)\triangle E(G_2)$. In both $G_1$ and $G_2$ there are boundedly many vertices $v$ such that $e$ intersects the neighborhood of radius $r$ of $v$. It is easy to see that $2(d+1)^r$ is such a bound. The colored neighborhoods of the rest of the vertices are the   same in $G_1$ and $G_2$. It follows that the total variation distance of $\mu_{r,G_1,f}$ and $\mu_{r,G_2,f}$ is at most $2(d+1)^r/i$. This completes the proof. \hfill $\square$
\end{proof}

\subsubsection{Typical processes} 

In this section we prove  a correspondence principle between typical processes and random $d$-regular graphs. 

Throughout this section, $d\geq 3$ will be fixed, and  $\mathbb G_n$ will be a uniformly chosen random $d$-regular graph on $n$ vertices. 

\begin{lemma}\label{typl1} For fixed $d\geq 3$ there is a sequence  $\lbrace B_n\rbrace_{n=1}^{\infty}$ of $d$-regular graphs with $|V(B_n)|=n$ such that $d_{CN}(B_n, \mathbb G_n)$ tends to $0$ in probability as $n\rightarrow \infty$.
\end{lemma}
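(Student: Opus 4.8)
The plan is to show that the law of $\mathbb{G}_n$ concentrates in the compact metric space $(\overline{X_d}, d_{CN})$ around a single deterministic point and then to extract $B_n$ from the support of that law. First I would reduce the problem to one coordinate of the metric. Since $d_{CN}(G,G')=\sum_{k,r} 2^{-k-r} d_H(Q_{r,G,k}, Q_{r,G',k})$ with every Hausdorff distance bounded by $1$, it suffices to prove that for each fixed pair $(r,k)$ one has $d_H(Q_{r,\mathbb{G}_n,k}, Q_{r,\mathbb{G}_n',k}) \to 0$ in probability, where $\mathbb{G}_n'$ is an independent copy of $\mathbb{G}_n$: the tail of the double sum is uniformly small, and only finitely many terms remain.

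Fix $(r,k)$. The key structural fact is that $U^{r,k}$ is a finite set (rooted colored graphs of radius $\le r$ and degree $\le d$), so $\mathcal{M}(U^{r,k})$ is a compact finite-dimensional simplex. I would choose a finite $\varepsilon$-net $P_1,\dots,P_m$ of it and, for each net point, define the scalar functional $\phi_j(G) = \mathrm{dist}(P_j, Q_{r,G,k}) = \inf_f d_{TV}(P_j, \mu_{r,G,f})$. A net argument gives, for compact subsets $A,B$, the bound $d_H(A,B) \le 2\varepsilon + \max_j |\mathrm{dist}(P_j,A) - \mathrm{dist}(P_j,B)|$, so it is enough to prove $|\phi_j(\mathbb{G}_n) - \phi_j(\mathbb{G}_n')| \to 0$ in probability for each $j$. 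This converts the set-valued comparison into the concentration of finitely many real random variables.

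To obtain the latter I would show that each $\phi_j$ concentrates around its mean. By the estimate behind Lemma \ref{lem:lip}, modifying a bounded number of edges of $G$ changes $\mu_{r,G,f}$ by at most $c(d+1)^r/n$ in total variation simultaneously for every coloring $f$; hence $Q_{r,G,k}$ moves by at most that amount in Hausdorff distance and $\phi_j$ is Lipschitz with constant $O((d+1)^r/n)$ under a single regularity-preserving switching. Exposing the random graph through the configuration model as a pairing of half-edges yields a Doob martingale of length $O(n)$ with increments of order $(d+1)^r/n$, so the Azuma--Hoeffding inequality gives $P(|\phi_j(\mathbb{G}_n) - \mathbb{E}\phi_j(\mathbb{G}_n)| > t) \le 2\exp(-\Omega(t^2 n /(d+1)^{2r}))$, and conditioning on simplicity (an event of probability bounded away from $0$) transfers this to the uniform model. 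Since $\mathbb{G}_n$ and $\mathbb{G}_n'$ have the same distribution, both $\phi_j(\mathbb{G}_n)$ and $\phi_j(\mathbb{G}_n')$ concentrate around the same value, whence $|\phi_j(\mathbb{G}_n) - \phi_j(\mathbb{G}_n')| \to 0$ in probability and, by the previous paragraph, $d_{CN}(\mathbb{G}_n,\mathbb{G}_n') \to 0$ in probability.

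Finally, to produce the deterministic graphs I would average. Writing $p_n = \mathbb{E}[\min(d_{CN}(\mathbb{G}_n,\mathbb{G}_n'),1)]$, dominated convergence gives $p_n \to 0$; by Fubini there is, for each $n$, an actual $d$-regular graph $B_n$ on $n$ vertices with $\mathbb{E}[\min(d_{CN}(\mathbb{G}_n,B_n),1)] \le p_n$, and Markov's inequality then yields $P(d_{CN}(\mathbb{G}_n,B_n) > \varepsilon) \to 0$ for every $\varepsilon$, as required. The step I expect to be the main obstacle is the concentration argument: justifying the bounded-differences inequality for the \emph{uniform} random regular graph, rather than for independent coordinates, forces one to work in the configuration model, to use switchings that alter two edges at once, and to control the conditioning on simplicity (so that the relevant centering is the common conditional mean of the two copies). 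The reduction of the Hausdorff distance to the finitely many scalar functionals $\phi_j$ is the other point requiring care, but it is routine once the finiteness of $U^{r,k}$ is exploited.
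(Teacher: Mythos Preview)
Your argument is correct, and it rests on the same two pillars as the paper's proof: the Lipschitz dependence of the colored-neighborhood statistics on single edge changes (the paper's Lemma~\ref{lem:lip}) together with an Azuma-type concentration in the configuration model. The route you take, however, is genuinely different. The paper works directly in the graph space: it uses the compactness of $\overline{X_d}$ to fix an $\varepsilon/4$-net of size $F(\varepsilon/4)$ independent of $n$ among $d$-regular graphs on $n$ vertices (Lemma~\ref{lem:halo}), picks by pigeonhole a net point $B_{n,\varepsilon}$ that is $\varepsilon/4$-close to $\mathbb{G}_n$ with probability at least $1/F(\varepsilon/4)$, and then applies concentration \emph{once}, to the single scalar $G\mapsto d_{CN}(B_{n,\varepsilon},G)$, to upgrade ``positive probability of being close'' to ``high probability of being close''; a diagonalization finishes. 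You instead decompose $d_{CN}$ into its $(r,k)$-coordinates, take a net in each finite-dimensional simplex $\mathcal{M}(U^{r,k})$, reduce the Hausdorff comparison to finitely many scalar distance functionals $\phi_j$, prove concentration for each of these, and only then extract the deterministic $B_n$ via an independent-copy/Fubini argument. Your approach is more modular and avoids appealing to compactness of the full completion $\overline{X_d}$, at the cost of a longer chain of reductions; the paper's pigeonhole-on-a-net trick is shorter and needs only one invocation of Azuma, but relies on Lemma~\ref{lem:halo}. Both are valid proofs of the same fact.
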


\begin{proof}
Given $\varepsilon>0$, for all $n\geq 1$, by using Lemma \ref{lem:net}, we choose an $\varepsilon/4$-net $N_n$ of size at most $F(\varepsilon/4)$ in the set of $d$-regular graphs with $n$ vertices  with respect to the colored neighborhood metric. (We emphasize that the size of the net does not depend on the number of vertices of the graph.) For each $n$, let $B_{n, \varepsilon}\in N_n$ be a (deterministic) $d$-regular graph on vertices such that 
\begin{equation}\label{conce1}\mathbb P(d_{CN}(B_{n, \varepsilon}, \mathbb G_n)\leq \varepsilon/4)\geq \frac{1}{F(\varepsilon/4)},\end{equation} 
where $\mathbb G_n$ is a uniform random $d$-regular graph on $n$ vertices.   
Such a $B_{n, \varepsilon}$ must exist according to the definition  of the $\varepsilon/4$-net $N_n$. 

We define $f_{n, \varepsilon}(H_n)=d_{CN}(B_{n, \varepsilon}, H_n)$ for $d$-regular graphs $H_n$ on $n$ vertices. By Lemma \ref{lem:lip}, if $n\geq n_0$ with some fixed $n_0$, then $f_{n, \varepsilon}$ is a Lipschitz function with $\delta$. By well-known concentration inequalities (based on the exploration process and Azuma's inequality on martingales, see e.g. \cite[Chapter 7]{alon}, this implies the following. For all $\eta>0$ there exists $n_1=n_1(\eta)$ such that  
\begin{equation}\label{conce2}\mathbb P(|f_{n, \varepsilon}(\mathbb G_n)-\mathbb E(f_{n, \varepsilon}(\mathbb G_n))|>\eta)\leq \eta \qquad (n\geq n_1).\end{equation}
By choosing $0<\eta<\min(\varepsilon/4, 1/F(\varepsilon/4))$, inequalities \eqref{conce1} and \eqref{conce2} together imply $\mathbb E(f_{n, \varepsilon}(\mathbb G_n))\leq \varepsilon/2$ $(n\geq n_1)$. That is, since $f_{n, \varepsilon}$ is concentrated around its expectation (due to its Lipschitz property) for large $n$, and $\mathbb G_n$ is close to some fixed graph with probability with a positive lower bound not depending on $n$, we conclude that this expectation has to be small for $n$ large enough. 

Putting this together, this yields 
\[\mathbb P(f_{n, \varepsilon}(\mathbb G_n)>\varepsilon)=\mathbb P(d_{CN}(B_{{n, \varepsilon}},\mathbb G_n)>\varepsilon)\leq \varepsilon \qquad (n\geq n(\varepsilon)).\]

By a standard diagonalization argument, let $k(n)=\max\{k\, \vert \, n(1/k)<n\}$ and $B_n=B_{n,1/k(n)}$. It is clear by the last inequality that $\{ B_n\}_{n=1}^{\infty}$ satisfies the requirement. \hfill $\square$

\end{proof}

\begin{proposition} \label{prop:graphing}For all infinite $S\subseteq \mathbb N$ there exists an infinite $S'\subseteq S$ and a graphing $\mathcal G\in \overline{X_d}$ such that if $(\mathbb G_i)_{i\in S'}$ is a sequence of independent $d$-regular random graphs with $|V(\mathbb G_i)|=i$, then $(\mathbb G_i)_{i\in S'}$ locally-globally converges to the 
graphing $\mathcal G$ with probability $1$.

\end{proposition}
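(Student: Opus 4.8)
The plan is to build the graphing $\mathcal G$ as the local-global limit of the deterministic sequence $\{B_n\}$ produced by Lemma \ref{typl1}, and then argue that the random sequence $(\mathbb G_i)_{i\in S'}$ converges to the same object with probability $1$. First I would invoke Lemma \ref{typl1} to obtain $\{B_n\}_{n=1}^\infty$ with $d_{CN}(B_n,\mathbb G_n)\to 0$ in probability. Since $(\overline{X_d},d_{CN})$ is compact, the deterministic sequence $(B_i)_{i\in S}$ has a convergent subsequence indexed by some infinite $S'\subseteq S$; let $\mathcal G\in\overline{X_d}$ be its limit, so that $d_{CN}(B_i,\mathcal G)\to 0$ as $i\to\infty$ along $S'$. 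By the triangle inequality $d_{CN}(\mathbb G_i,\mathcal G)\le d_{CN}(\mathbb G_i,B_i)+d_{CN}(B_i,\mathcal G)$, so it suffices to upgrade the in-probability convergence $d_{CN}(\mathbb G_i,B_i)\to 0$ to almost-sure convergence along $S'$.

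The key step is this upgrade, and the mechanism is the same concentration estimate that powers Lemma \ref{typl1}. For fixed $\varepsilon>0$ and large $i$ the function $H\mapsto d_{CN}(B_i,H)$ is Lipschitz (Lemma \ref{lem:lip}), so Azuma's inequality on the edge-exploration martingale gives an exponential tail bound
\[
\mathbb P\bigl(|d_{CN}(B_i,\mathbb G_i)-\mathbb E\,d_{CN}(B_i,\mathbb G_i)|>\varepsilon\bigr)\le C\exp(-c\varepsilon^2 i),
\]
with constants independent of $i$. Combined with the bound $\mathbb E\,d_{CN}(B_i,\mathbb G_i)\to 0$ extracted inside the proof of Lemma \ref{typl1}, this yields $\sum_{i\in S'}\mathbb P(d_{CN}(\mathbb G_i,B_i)>\varepsilon)<\infty$ for every $\varepsilon>0$. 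By Borel--Cantelli, $d_{CN}(\mathbb G_i,B_i)\to 0$ almost surely along $S'$, and hence $d_{CN}(\mathbb G_i,\mathcal G)\to 0$ almost surely, which is exactly local-global convergence to $\mathcal G$.

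I expect the main obstacle to be the interplay between the two limits: the subsequence $S'$ is chosen to make the \emph{deterministic} graphs $B_i$ converge, while the almost-sure statement concerns the \emph{random} graphs $\mathbb G_i$, and one must be careful that the Borel--Cantelli bound genuinely holds along the chosen $S'$ and not merely in probability along all of $S$. The summability requires an explicitly exponential (not merely polynomial) tail, so I would need to confirm that the Azuma estimate behind \eqref{conce2} is strong enough to give $\sum_i\mathbb P(\cdot>\varepsilon)<\infty$; if the cited concentration is stated only in the weak form \eqref{conce2}, I would restate it with the full exponential rate, which the exploration-process argument does supply. A secondary technical point is that the diagonal choice of $\varepsilon$-dependence in $\{B_n\}$ must be reconciled with fixing $\varepsilon$ here; taking $\varepsilon\to 0$ through a countable sequence (say $\varepsilon=1/m$) and intersecting the resulting full-measure events handles this cleanly.
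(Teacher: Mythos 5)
Your proposal is correct, and its three ingredients (Lemma \ref{typl1}, compactness of $\overline{X_d}$, triangle inequality) are exactly those of the paper's proof; the difference lies in the order of the two subsequence extractions and in what you need from concentration. The paper first passes from convergence in probability to almost sure convergence: it chooses $S_1\subseteq S$ so sparse that $\sum_{n\in S_1}\mathbb P(d_{CN}(B_n,\mathbb G_n)>\varepsilon_n)<\infty$ for some $\varepsilon_n\to 0$ (the standard ``in probability implies a.s.\ along a subsequence'' step, which needs no input beyond Lemma \ref{typl1} itself), and only then extracts $S'\subseteq S_1$ by compactness so that $B_n\to\mathcal G$. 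You instead fix $S'$ by compactness first and then must prove almost sure convergence along that \emph{prescribed} subsequence, which forces you to upgrade the weak bound \eqref{conce2} to a summable exponential tail via Azuma. That upgrade is genuinely available (the exploration-process martingale has $O(i)$ steps with increments that one can bound using the quantitative form of Lemma \ref{lem:lip}, and boundedness of $d_{CN}$ turns convergence in probability into convergence of the expectations), and your closing remarks correctly identify the two points that need care, so the argument goes through; it just costs more than the paper's route, which buys the same conclusion with an extra thinning of the index set instead of a sharpened concentration inequality.
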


\begin{proof} First, based on Lemma \ref{typl1}, we can choose $S_1\subseteq S$ such that $\{d_{CN}(B_n, \mathbb G_n)\}_{n\in S_1}$ tends to 0 with probability 1. On the other hand, by compactness, there is an infinite subsequence $S'\subseteq S_1$ such that $\{ B_n\}_{n\in S'}$ is locally-globally convergent. Let $\mathcal G$ be its limit. This completes the proof.
 \hfill $\square$

\end{proof}

\medskip

Graphings arising as the local-global limits of sequences of random graphs -- like in Proposition \ref{prop:graphing} -- play an important role when we are dealing with random $d$-regular graphs and typical processes. 

\begin{definition}
A graphing $\mathcal G\in \overline{X_d}$ is called typical if there exists an infinite $S'\subseteq \mathbb N$ such that if $\{\mathbb G_i\}_{i\in S'}$ is a sequence of independent $d$-regular random graphs with $|V(\mathbb G_i)|=i$, then $\{\mathbb G_i\}_{i\in S'}$  locally-globally converges to $\mathcal G$ with probability $1$.
\end{definition} 

We conjecture  that (with respect to local-global equivalence) there is a unique typical graphing. To put it in another way, the almost sure limit of sequences of random regular graphs does not depend on the sequence of the number of vertices. More precisely, the conjecture is the following. If $\mathcal G$ and $\mathcal G'$ are both typical graphings, then $\mathcal G$ and $\mathcal G'$ are locally-globally equivalent (i.e. their 
local-global distance is 0). This is essentially saying that a growing sequence of random $d$-regular graphs is convergent in probability. Deep results in favour of this conjecture were established by Bayati, Gamarnik and Tetali \cite{bayati}. They proved the convergence in probability of various graph parameters, e.g. the independence ratio. Note that the paper \cite{HLSz} has a formally stronger conjecture, which states convergence with probability 1. 

\medskip

We will need the following fact, which would also trivially follow from this conjecture. 

\begin{lemma} \label{lem:closed} The set of typical graphings is closed within the local-global topology in $\overline {X_d}$.
\end{lemma}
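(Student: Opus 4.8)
The plan is to reformulate typicality in terms of the single deterministic sequence $\{B_n\}_{n=1}^\infty$ produced by Lemma \ref{typl1}, and then to invoke the elementary fact that the set of subsequential limits of a sequence in a (compact) metric space is closed. The key claim I would establish first is the following reformulation: a graphing $\mathcal G\in\overline{X_d}$ is typical if and only if it is an accumulation point of the deterministic sequence $\{B_n\}$ in $(\overline{X_d},d_{CN})$. Here I use from Lemma \ref{typl1} that $d_{CN}(B_n,\mathbb G_n)\to 0$ in probability, together with the standard fact that convergence in probability along an infinite index set can be upgraded to almost sure convergence by passing to a further infinite subsequence (first Borel--Cantelli lemma).

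For the forward direction I would argue as follows. Suppose $\mathcal G$ is typical, witnessed by an infinite $S'\subseteq\mathbb N$ along which $d_{CN}(\mathbb G_i,\mathcal G)\to 0$ almost surely. Since the marginal law of each $\mathbb G_i$ is the uniform random $d$-regular graph on $i$ vertices, Lemma \ref{typl1} gives $d_{CN}(B_i,\mathbb G_i)\to 0$ in probability along $S'$, so I may pass to a further infinite $S''\subseteq S'$ on which $d_{CN}(B_i,\mathbb G_i)\to 0$ almost surely. On the probability-one event where both convergences hold, the triangle inequality
\[ d_{CN}(B_i,\mathcal G)\le d_{CN}(B_i,\mathbb G_i)+d_{CN}(\mathbb G_i,\mathcal G) \]
forces $d_{CN}(B_i,\mathcal G)\to 0$ along $S''$; but $B_i$ and $\mathcal G$ are deterministic, so this is simply the deterministic statement $B_i\to\mathcal G$ along $S''$, exhibiting $\mathcal G$ as an accumulation point of $\{B_n\}$. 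For the reverse direction, if $B_i\to\mathcal G$ along some infinite $S_0$, then extracting $S'\subseteq S_0$ with $d_{CN}(B_i,\mathbb G_i)\to 0$ almost surely and using the same triangle inequality yields $d_{CN}(\mathbb G_i,\mathcal G)\to 0$ almost surely along $S'$; since the $\mathbb G_i$ are independent, this is exactly the assertion that $\mathcal G$ is typical.

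With the reformulation in hand, it remains to show that the set $L$ of accumulation points of the fixed sequence $\{B_n\}$ in $(\overline{X_d},d_{CN})$ is closed, which I would do by a direct diagonal argument: given $\mathcal G_k\in L$ with $\mathcal G_k\to\mathcal G$, I would choose indices $n_k\nearrow\infty$ with $d_{CN}(B_{n_k},\mathcal G_k)<1/k$, whence $d_{CN}(B_{n_k},\mathcal G)\le 1/k+d_{CN}(\mathcal G_k,\mathcal G)\to 0$ and therefore $\mathcal G\in L$. Since $L$ coincides with the set of typical graphings, this proves the lemma. The only genuinely delicate point is the bookkeeping in passing between convergence in probability and almost sure convergence along nested subsequences; once the reformulation to accumulation points of a single deterministic sequence is set up, the remaining topology is routine. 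I expect this subsequence bookkeeping, together with checking that the independence hypothesis in the definition of typical survives the reformulation, to be the main points requiring care.
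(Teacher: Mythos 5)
Your proof is correct, but it takes a genuinely different route from the paper's. The paper argues directly on the random graphs: given typical $\mathcal G_i\to\mathcal G$ with $\sum_i d_{CN}(\mathcal G_i,\mathcal G)<\infty$, it picks a single index $j_i$ from each witnessing set $S_i$ so that $\sum_i \mathbb E\bigl(d_{CN}(\mathbb G_{j_i},\mathcal G_i)\bigr)<\infty$, and then the triangle inequality plus (implicitly) Markov and Borel--Cantelli show that the independent sequence $\{\mathbb G_{j_i}\}$ converges to $\mathcal G$ almost surely --- a one-step diagonal argument that never mentions the net sequence $\{B_n\}$. You instead prove the stronger structural statement that the typical graphings are \emph{exactly} the accumulation points of the deterministic sequence $\{B_n\}$ from Lemma \ref{typl1}, and then invoke the elementary fact that the set of subsequential limits of a fixed sequence in a metric space is closed. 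Both arguments lean on the same probabilistic engine (upgrading convergence in probability to almost sure convergence along a subsequence via Borel--Cantelli), and your subsequence bookkeeping is sound: the forward and reverse directions of the equivalence each pass to one further infinite subset and apply the triangle inequality on the intersection of two full-measure events, and independence of the $\mathbb G_i$ is preserved because you only ever restrict the index set. What your reformulation buys is more than the lemma itself: it shows the typical graphings form a nonempty compact set (a closed subset of the compact $\overline{X_d}$), it makes Proposition \ref{prop:graphing} an immediate corollary, and it recasts the paper's uniqueness conjecture as the assertion that $\{B_n\}$ is itself locally-globally convergent. What the paper's proof buys is brevity and independence from the specific net construction of Lemma \ref{typl1}: it works straight from the definition of typicality.
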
 
\begin{proof}Let $\{\mathcal G_n\}_{n=1}^{\infty}$ be a sequence of typical graphings converging locally-globally to $\mathcal G$. We can assume that $\sum_{n=1}^{\infty} d_{CN}(\mathcal G_n, \mathcal G)$ is finite. By definition, for every $i\in \mathbb N$ there is an infinite set $S_i$ such that $\{\mathbb G_n\}_{n\in S_i}$ converges to $\mathcal G$ with probability 1. Choose $j_i\in S_i$ such that \[\sum_{i=1}^{\infty}\mathbb E(d_{CN}(\mathbb G_{j_i}, \mathcal G_i))<\infty.\] 
Using triangle inequality and our assumption on the seqence $\{\mathcal G_i\}$, we may replace $\mathcal G_i$ by $\mathcal G$, and the sum remains finite. This shows that the sequence of independent random graphs $\{\mathbb G_i\}_{i=1}^{\infty}$ locally-globally converges to $\mathcal G$ with probability 1, and hence $\mathcal G$ is a typical graphing. \hfill $\square$ 

\end{proof}

\medskip

Our goal is to understand the consequences of results on typical processes for random $d$-regular graphs. In order to do this, we recall that there is a connection between $d$-regular  graphings and invariant processes on the $d$-regular tree \cite{cordec, HLSz}, with the property that typical graphings correspond to typical processes. Suppose that $\mathcal G$ is a  $d$-regular graphing. Moreover, suppose that the vertices of $\mathcal G$ are colored with a finite color set $S$ in a measurable way. Then choose a random vertex of $G$ and 
map the rooted $d$-regular tree into $\mathcal G$  by a random graph covering such that the root is mapped to the chosen vertex. By assigning to each vertex of the $d$-regular tree the color of its image in $\mathcal G$, we get a random coloring of $T_d^*$. This way we get a random invariant process on $T_d^*$. Now we consider all the processes that can be obtained from $\mathcal G$ with an $S$-coloring. We denote by $\gamma(\mathcal G, S)$ the closure of this set in the weak topology. Note that $\gamma(\mathcal G, S)$ is invariant with respect to local-global equivalence of graphings. 

It follows immediately from the definition that if the graphing $\mathcal G$ is typical and $S$ is an arbitrary finite set, then all processes in $\gamma(\mathcal G, S)$ are typical. Furthermore, every typical process can be obtained this way.   By Lemma 
\ref{lem:closed} we get the next corollary. 

\begin{lemma}\label{lem:closedpr}For every fixed $d$ and finite set $S$, the set of typical processes with values in $S$ is closed with respect to the weak topology.
\end{lemma}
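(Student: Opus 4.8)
The plan is to exploit the correspondence with typical graphings established just above the lemma: the set $R_d(S)$ of typical processes with values in $S$ is exactly the union $\bigcup \gamma(\mathcal G, S)$ taken over all typical graphings $\mathcal G$. Starting from a weakly convergent sequence $\mu_n \to \mu$ with each $\mu_n$ typical, I would first pick typical graphings $\mathcal G_n$ with $\mu_n \in \gamma(\mathcal G_n, S)$. Since $(\overline{X_d}, d_{CN})$ is compact, after passing to a subsequence I may assume $\mathcal G_n \to \mathcal G$ in the local-global topology for some $\mathcal G \in \overline{X_d}$, and by Lemma \ref{lem:closed} the limit $\mathcal G$ is again typical. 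It then suffices to prove $\mu \in \gamma(\mathcal G, S)$, since by the correspondence this makes $\mu$ typical.

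The core of the argument is a marginal-by-marginal comparison. For each radius $r$ let $\mu^{(r)}$ denote the push-forward of $\mu$ to the finite set $U^{r,|S|}$, i.e. the distribution of the $S$-colored $r$-neighborhood of the root; because $U^{r,|S|}$ is finite, weak convergence $\mu_n \to \mu$ is equivalent to $d_{TV}(\mu_n^{(r)}, \mu^{(r)}) \to 0$ for every $r$. As $\mu_n \in \gamma(\mathcal G_n, S)$ is a weak limit of processes realized by colorings of $\mathcal G_n$, its marginal $\mu_n^{(r)}$ lies in the $d_{TV}$-closure of $Q_{r, \mathcal G_n, |S|}$. Local-global convergence $\mathcal G_n \to \mathcal G$ moreover gives $d_H(Q_{r, \mathcal G_n, |S|}, Q_{r, \mathcal G, |S|}) \to 0$ for each fixed $r$. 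Combining these three inputs by a routine triangle-inequality estimate yields $\mu^{(r)} \in \overline{Q_{r, \mathcal G, |S|}}$ for every $r$.

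Next I would upgrade this radius-wise realizability to realizability of the whole process. Fixing $r$ and $\varepsilon>0$, since $\mu^{(r)} \in \overline{Q_{r,\mathcal G,|S|}}$ there is a measurable coloring $f:\Omega \to S$ of $\mathcal G$ with $d_{TV}(\mu_{r,\mathcal G,f}, \mu^{(r)}) < \varepsilon$. This $f$ determines a genuine process $\nu_f \in \gamma(\mathcal G, S)$ whose radius-$r$ marginal is $\mu_{r,\mathcal G, f}$; and because the radius-$r'$ marginal is obtained from the radius-$r$ marginal by the continuous restriction map $U^{r,|S|} \to U^{r',|S|}$, while the marginals of $\mu$ are consistent under the same restriction, all marginals of $\nu_f$ up to radius $r$ lie within $\varepsilon$ of those of $\mu$. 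Letting $r \to \infty$ and $\varepsilon \to 0$ produces a sequence in $\gamma(\mathcal G, S)$ converging weakly to $\mu$, and since $\gamma(\mathcal G, S)$ is by definition weakly closed, $\mu \in \gamma(\mathcal G, S)$, as desired.

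I expect the main obstacle to be precisely this last passage from marginal-wise realizability to realizability of the full process: one must ensure that a single coloring of $\mathcal G$ simultaneously controls all the lower-radius marginals, which is exactly what the restriction-consistency of $\mu$ guarantees. The remaining ingredients — finiteness of $U^{r,k}$ (reducing weak convergence to $d_{TV}$-convergence of finitely many coordinates), Hausdorff convergence of the local statistics under local-global convergence, compactness of $\overline{X_d}$, and the closedness of the set of typical graphings from Lemma \ref{lem:closed} — are then assembled by elementary metric-space arguments.
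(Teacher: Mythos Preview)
Your proposal is correct and follows the paper's approach: both derive the result from Lemma~\ref{lem:closed} via the identification of typical processes with $\bigcup_{\mathcal G\text{ typical}}\gamma(\mathcal G,S)$. The paper states the lemma as an immediate corollary without further detail, whereas you have spelled out the compactness, Hausdorff-convergence, and marginal-approximation arguments that make the deduction rigorous.
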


Now we are ready to prove the following correspondence principle between random graphs and typical graphings. 

\begin{proposition} \label{prop:corres}Let $(\mathbb G_i)_{i \in \mathbb N}$ be a sequence of independent random $d$-regular graphs with the number of vertices tending to infinity. Let $C$ be a closed subset of $\overline {X_d}$ with respect to the local-global topology. Suppose that $C$ does not contain any typical graphings. Then $P(\mathbb G_i\in C)\rightarrow 0$ as 
$i\rightarrow\infty$. 
\end{proposition}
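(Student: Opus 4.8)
The plan is to argue by contradiction, extracting from a failure of the conclusion a typical graphing that lies in the closed set $C$. Suppose that $P(\mathbb G_i\in C)\not\to 0$. Then there is some $\alpha>0$ and an infinite set $S\subseteq\mathbb N$ such that $P(\mathbb G_i\in C)\geq\alpha$ for all $i\in S$. I want to produce from this a typical graphing contained in $C$, which contradicts the hypothesis that $C$ contains no typical graphings.

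First I would invoke Proposition \ref{prop:graphing} applied to the infinite index set $S$: it yields an infinite subset $S'\subseteq S$ and a graphing $\mathcal G\in\overline{X_d}$ such that the sequence of independent random $d$-regular graphs $(\mathbb G_i)_{i\in S'}$ locally-globally converges to $\mathcal G$ with probability $1$. By definition this $\mathcal G$ is a typical graphing. The heart of the argument is then to show that $\mathcal G\in C$. Since $(\mathbb G_i)_{i\in S'}\to\mathcal G$ almost surely and $P(\mathbb G_i\in C)\geq\alpha>0$ for every $i\in S'$, I would argue that the limit $\mathcal G$ cannot escape the closed set $C$. Concretely, with probability $1$ we have $d_{CN}(\mathbb G_i,\mathcal G)\to 0$ along $S'$; but for each $i\in S'$ there is probability at least $\alpha$ that $\mathbb G_i\in C$, so along almost every sample path infinitely many of the $\mathbb G_i$ lie in $C$ (a Borel--Cantelli / second-moment style observation that the events $\{\mathbb G_i\in C\}$ occur infinitely often with positive probability, which can be upgraded since the convergence is almost sure on the whole sequence). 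On such a path, $\mathcal G$ is the $d_{CN}$-limit of a subsequence drawn from $C$, and because $C$ is closed in the local-global topology, $\mathcal G\in C$.

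Thus $\mathcal G$ is a typical graphing lying in $C$, contradicting the assumption that $C$ contains no typical graphings; therefore $P(\mathbb G_i\in C)\to 0$, as claimed.

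The step I expect to be the main obstacle is the passage from ``$P(\mathbb G_i\in C)\geq\alpha$ for every $i$'' to ``$\mathcal G\in C$'' on a positive-probability (hence, by the almost-sure convergence, on an almost-every) sample path. The delicate point is that the events $\{\mathbb G_i\in C\}$ are about different probability spaces (graphs on different vertex numbers) while the convergence statement is almost sure for the coupled independent sequence; I must be careful to combine the almost-sure convergence of the whole sequence with the uniform lower bound $\alpha$ on the marginal probabilities so as to guarantee that, on a set of paths of positive measure, infinitely many indices $i$ satisfy $\mathbb G_i\in C$ simultaneously with $d_{CN}(\mathbb G_i,\mathcal G)\to 0$. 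Once that coexistence is secured on even a single sample path, closedness of $C$ finishes the argument immediately.
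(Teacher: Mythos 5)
Your proof is correct and follows essentially the same route as the paper: pass to an infinite index set where $P(\mathbb G_i\in C)\geq\alpha$, extract a typical graphing $\mathcal G$ via Proposition \ref{prop:graphing}, show $\mathcal G\in C$ by closedness, and contradict the hypothesis. The step you flag as the main obstacle is handled exactly as you suspect: since the $\mathbb G_i$ are independent and $\sum_{i\in S'}P(\mathbb G_i\in C)=\infty$, the second Borel--Cantelli lemma gives that infinitely many events $\{\mathbb G_i\in C\}$ occur with probability $1$, which combined with the almost sure convergence yields a sample path on which $\mathcal G$ is a local-global limit of graphs in $C$.
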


\begin{proof} Assume that $S=\lbrace i\in \mathbb N: P(\mathbb G_i\in C)>\varepsilon\rbrace$ is infinite for some $\varepsilon>0$. Choose $S'\subseteq S$ by Proposition \ref{prop:graphing}; that is, $(\mathbb G_i)_{i\in S'}$ locally-globally converges to a fixed graphing $\mathcal G$ with probability 1. On the other hand, by independence, it follows that with probability 1 we have $\mathbb G_i\in C$ for infinitely many $i\in S'$. 
Since $C$ is closed in the local-global topology, and $\mathcal G$ is the limit of the whole sequence almost surely, this implies that $\mathcal G$ has to be in $C$. But, by definition, $\mathcal G$ is typical. This contradicts our assumption on $C$. \hfill $\square$
\end{proof}

\medskip

The main application of Proposition \ref{prop:corres} is that we can turn statements about typical processes into statements about random $d$-regular graphs. 
As we have explained before,  typical processes are exactly the processes coming from typical graphings. Therefore if we succeed in excluding typical processes from a closed set within the weak topology of invariant processes, then at the same time we exclude typical graphings from a closed set within the local-global topology, and through Proposition \ref{prop:corres} we obtain a result for random $d$-regular graphs. 
We will demonstrate this principle on concrete examples  in Section \ref{dominating}.

\subsection{Joinings and related metric}

\label{joining}

An invariant coupling, or shortly {\it joining}, of two elements $\mu,\nu\in I_d(M)$ is a process $\psi\in I_d(M\times M)$ such that the two marginal processes of $\psi$ (with respect to the first and second coordinate in $M\times M$) are  $\mu$ and $\nu$.
We denote by $C(\mu,\nu)$ the set of all joinings of $\mu$ and $\nu$.

Assume that the topology on $M$ is given by a metric $m:M\times M\rightarrow\mathbb{R}^+\cup\{0\}$. Then we define a distance $m_c$ on $I_d(M)$ in the following way.
\begin{equation}m_c(\mu,\nu)=\inf_{\psi\in C(\mu,\nu)}\mathbb{E}(m(\psi|_v)),\label{eq:metric}\end{equation}   
where $v$ is an arbitrary fixed vertex of $T_d$ and $\psi|_v$ is the restriction of $\psi$ to $v$. Note that automorphism invariance implies that $m_c$ does not depend on the choice of $v$.  
If $M$ has finite diameter, then $m_c(\mu,\nu)$ is a finite number bounded by this diameter. 

This is basically Ornstein's $\bar d$-metric, which was originally defined for $\mathbb Z$-invariant processes, see e.g. \cite{glasner}. See also the recent papers of Lyons and Thom \cite{russ, monoton1}  where 
several results and open questions on $T_d$ are presented,  connecting  the factor of i.i.d. processes to 
this metric.

 The key to the proof of the fact that this is a metric is the notion of relatively independent joining \cite[Chapter 15, Section 7]{glasner}. Assume that $\psi_{1,2}\in C(\mu_1,\mu_2)$ and $\psi_{2,3}\in C(\mu_2,\mu_3)$. Let us consider the unique joining of $\psi_{1,2}$ and $\psi_{2,3}$ that identifies the marginal $\mu_2$ and has the property that $\mu_1$ and $\mu_3$ are conditionally independent with respect to $\mu_2$. 
We remark that using relatively independent joinings and some kind of Borel--Cantelli arguments  one can check that the space of invariant processes is complete with respect to the $\bar d$-metric.

The case when $M$ is a finite set plays a special role in our paper. In this case we define 
$m(x,y)=1$ if $x\neq y$ and $m(x,x)=0$ for $x,y\in M$. The corresponding metric $m_c$ is regarded as the Hamming distance for processes in $I_d(M)$.

\medskip

\section{Glauber dynamics and branching Markov processes}

\label{glauber}

Glauber dynamics is an important tool in statistical physics. In this chapter we consider a variant of 
heat-bath Glauber dynamics that is an $m_c$-continuous transformation on $I_d(M)$. 
We begin with the finite case, then we define the Dobrushin coefficient, and formulate the main results: a Dobrushin-type sufficient condition for branching Markov chains to be factor of i.i.d. 
Then we give a brief description of the Poisson Glauber dynamics that seems to be the closest analogy to classical Glauber dynamics, and we define something similar, that is more technical, but more useful in our applications. 

\subsection{Glauber dynamics on finite graphs}

\label{finiteglaub}

First suppose that $G$ is a (potentially infinite) $d$-regular graph, and we have a reversible Markov chain with finite state space $S$ 
and transition matrix $Q$. We think of $G$ such that each vertex has a state from $S$; the state of the graph is an element in $S^{V(G)}$. A {\it Glauber step  at vertex} $v\in V(G)$ is a way of generating a random state from a given state of the graph. 
We do this by randomizing the state of $v$ conditionally on the states of its neighbors, as follows. 

Let $N(v)$ denote the set of the neighbors of $v$. Let $C=v\cup  N(v)$ and $\mu_C$ the distribution of the branching Markov process restricted to $C$. For a state $\omega\in S^{N(v)}$, we define $B_{v, \omega}$ to be the conditional distribution of the state of $v$ given  $\omega$. The Glauber step at $v$ (the so called heat-bath version) is the operation of randomizing the state of $v$ from $B_{v, \omega}$.

Now we define the Glauber dynamics on a finite graph. It is a Markov chain on the state space of the graph $S^{V(G)}$ obtained by choosing a vertex $v$ uniformly at random, and performing the Glauber step at $v$. 
See e.g. Section 3.3. in \cite{markovmixing} on Glauber dynamics for various models.

 It is also clear from the theory of 
finite state space Markov chains that (with appropriate conditions on $Q$) this Markov chain has a unique stationary 
distribution, which is the limiting distribution of the Glauber dynamics. However, the order of the mixing time depends on $Q$; the question typically is whether the mixing 
time can be bounded by a linear 
function of the number of vertices. Our main result will show that the so called Dobrushin condition, which implies fast mixing, also implies that the process is factor of i.i.d. Note that the connection between fast mixing and factor of i.i.d. property was also implicitly used in \cite{gamarnik}. A paper of Berger, Kenyon, Mossel and Peres \cite{berger} deals with the problem of fast mixing on trees for the Ising model, i.e. when there are only two states. See Theorem 1.4. of \cite{berger}. Furthermore Mossel and Sly \cite{exact} gave a sharp threshold for 
general bounded degree graphs. The recent paper 
of Lubetzky and Sly \cite{spacetime} contains more refined results for the Ising model with underlying graph 
$(\mathbb Z/n\mathbb Z)^d$, and its Theorem 4 refers to analogous results for general graphs.  

It is important to mention the paper of Bubley and Dyer \cite{pathcoupling} on   fast mixing of 
the Glauber dynamics of Markov chains and on the path coupling technique, which 
is applied in  \cite{berger}, 
and whose ideas will be used in what follows. 
See also the paper of Dembo and Montanari \cite{dembo} and Chapter 15 in \cite{markovmixing} for more details on mixing time of the Glauber dynamics.

\subsection{The Dobrushin coefficient and factor of i.id. processes} 

When we examine how the properties of the Glauber dynamics depend on the transition matrix $Q$, it is helpful to investigate the following: how does a change in the state of a single neighbor of $v$ effect the conditional 
distribution of the state of $v$ at the Glauber step? This is the idea of the definition of the Dobrushin coefficient (see e.g. 
\cite{pathcoupling, dobrushin}). 

\begin{definition}[Dobrushin coefficient] \label{def:dobr}Let us consider a reversible Markov chain on a finite state space $S$ with transition matrix $Q$. 
The Dobrushin coefficient of the Markov chain is defined by 
\begin{multline*}D=\sup \bigl \lbrace d_{TV}( B_{v,\omega}, B_{v,\omega'}): \omega, \omega'\in  S^{N(v)},\ |\lbrace u\in N(v): \omega(u)\neq \omega'(u)\rbrace|=1 \bigr \rbrace, \end{multline*} 
where  $d_{TV}$ is the total variation distance of 
probability distributions:
\begin{multline*}d_{TV}(P_1,P_2)
=\frac{1}{2}\sum_{s\in S}|P_1(s)-P_2(s)|\\=\inf\lbrace \mathbb P(X\neq Y): X\sim P_1,\ Y\sim P_2,\ \mathbb P \textrm{\ is a coupling of } X \textrm{\ and\  }Y
\rbrace.\end{multline*}
\end{definition}

To put it in another way, we consider pairs of configurations on the neighbours of $v$ that differ at only one place. 
We calculate the total variation distance of the conditional distributions at $v$ given the two configurations. 
Finally we take the supremum for all these pairs. Note that this definition depends only on $Q$ and 
the number of neighbors of $v$.

\medskip

Now we can formulate the main result of this section, which will be proved in Subsection \ref{proofthm1}. 

\begin{theorem}\label{thm1}
If the condition $D<1/d$ holds for a reversible Markov chain with transition matrix $Q$ on a finite state space $S$, then the 
branching Markov process $\nu_Q$ corresponding to $Q$ on the $d$-regular tree $T_d$ is a factor of i.i.d. process; that is, $\nu_Q\in F_d(S)$.
\end{theorem}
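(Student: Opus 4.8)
The plan is to prove Theorem~\ref{thm1} by showing that when $D<1/d$, the Glauber dynamics on the infinite tree $T_d$ converges, in the $m_c$ (Hamming) metric on $I_d(S)$, to the branching Markov process $\nu_Q$, and that the whole construction can be realized as a factor of an i.i.d.\ process. The strategy follows the Ornstein--Weiss philosophy alluded to in the introduction: fast mixing of Glauber dynamics yields the factor of i.i.d.\ property.

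\bigskip

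\noindent\textbf{Proof proposal.}

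First I would set up the \emph{continuous-time} (Poisson) Glauber dynamics on $T_d$ driven by an i.i.d.\ source. Attach to each vertex $v\in V(T_d)$ an independent rate-$1$ Poisson clock together with an independent stream of uniform $[0,1]$ randomness; both of these are encoded in the i.i.d.\ labels at $v$, so any measurable equivariant functional of the resulting trajectory is automatically a factor of i.i.d. When $v$'s clock rings, resample the state of $v$ from $B_{v,\omega}$, the conditional distribution given the current states $\omega$ of its neighbors. The key point is that this dynamics is defined locally and ${\rm Aut}(T_d)$-equivariantly, so at any fixed time $t$ the law of the configuration lies in $I_d(S)$, and I must argue (i) that the dynamics is well-defined on the infinite tree despite having infinitely many clocks, and (ii) that it is a factor of i.i.d.\ for each fixed $t$, with the factor property surviving the limit $t\to\infty$.

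The analytic heart is a \emph{path-coupling / Dobrushin contraction} estimate. I would run two copies of the dynamics from different initial configurations using the \emph{same} Poisson clocks and couple each resampling step optimally, so that at a vertex $v$ whose neighbor-configurations differ in exactly $m$ places the post-step disagreement probability at $v$ is at most $mD$ (this is precisely what $d_{TV}(B_{v,\omega},B_{v,\omega'})\le D$ per differing coordinate, summed, gives). Writing $p_v(t)$ for the probability that the two copies disagree at $v$ at time $t$, the generator of the disagreement process yields a differential inequality of the form
\[
\frac{d}{dt}\,p_v(t)\;\le\;-p_v(t)\;+\;D\sum_{u\sim v}p_u(t).
\]
Because every vertex has exactly $d$ neighbors and $D<1/d$, the operator $u\mapsto -u + D\!\sum_{\text{nbr}}u$ is a strict contraction (its spectral bound on $\ell^\infty(V(T_d))$ is $-(1-dD)<0$), so $\sup_v p_v(t)$ decays like $e^{-(1-dD)t}$. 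By the coupling characterization of total variation this shows the dynamics is a strict $m_c$-contraction and hence has a \emph{unique} stationary measure in $I_d(S)$ to which every starting law converges. Two facts then finish the identification: the branching Markov process $\nu_Q$ is stationary for this dynamics (since $B_{v,\omega}$ is by construction its conditional distribution, reversibility makes $\nu_Q$ invariant under each Glauber step), and therefore $\nu_Q$ \emph{is} that unique stationary limit.

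Finally I would combine contraction with the factor-of-i.i.d.\ structure. For each fixed $t$ the time-$t$ configuration is, up to an $e^{-(1-dD)t}$ error in the $m_c$-metric, a finitary factor of i.i.d.: the contraction guarantees that the state at the root depends (to within that error) only on clocks and randomness in a bounded neighborhood, so truncating the dynamics to a finite radius gives a genuine factor of i.i.d.\ process $\mu_t$ with $m_c(\mu_t,\nu_Q)\to 0$. Since $F_d(S)$ is $m_c$-complete --- this is exactly the completeness of $I_d(M)$ under the $\bar d$-metric noted after equation~\eqref{eq:metric}, together with the fact that finitary factors of i.i.d.\ are factors of i.i.d.\ and that $F_d(S)$ is $m_c$-closed --- the limit $\nu_Q$ lies in $F_d(S)$, which is the claim.

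\bigskip

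\noindent\textbf{Main obstacle.} I expect the principal difficulty to be making the infinite-volume dynamics rigorous: on $T_d$ infinitely many clocks ring in any interval, so I must show the single-site updates propagate information only at finite speed and that the root's state is an almost-surely well-defined measurable function of finitely many clocks and uniform variables. The clean way is a \emph{graphical-representation / percolation-of-disagreements} argument, controlling the expected size of the space-time cluster of update sites that can influence the root before time $t$; the bound $D<1/d$ (equivalently $dD<1$, a subcriticality condition) is exactly what keeps this cluster finite in expectation and simultaneously drives the $m_c$-contraction. Getting this finite-speed-of-propagation estimate and transferring it cleanly into the statement ``time-$t$ law is a genuine factor of i.i.d.'' is the technically delicate step; everything downstream is then a contraction-plus-completeness argument.
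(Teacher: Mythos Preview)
Your overall strategy---run Glauber dynamics, establish Dobrushin contraction in the coupling Hamming metric, identify $\nu_Q$ as the unique fixed point---is exactly the paper's approach. The paper even sketches your continuous-time Poisson dynamics (Section~3.3) before opting for a discrete-time ``factor of i.i.d.\ Glauber step'' (Section~3.4), in which an invariant, factor-of-i.i.d.\ set $U$ of pairwise distance $\geq 3$ is selected and all vertices in $U$ update simultaneously. Your differential inequality and the paper's contraction estimate (Proposition~3.2) are morally the same computation.

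The genuine gap is in your final paragraph. You assert that $F_d(S)$ is $m_c$-closed, citing the completeness remark after equation~\eqref{eq:metric}. But that remark says only that the ambient space $I_d(M)$ is $\bar d$-complete; it says nothing about $F_d(S)$ being a closed subset, and the paper never claims this. For $\mathbb{Z}$-actions closedness follows from Ornstein theory, but for ${\rm Aut}(T_d)$ with $d\geq 3$ no such result is available here, and the whole point of the proof in Section~3.7 is to \emph{avoid} appealing to it. What the paper does instead is build the entire sequence of Glauber iterates explicitly as factors of a \emph{single common} i.i.d.\ source (countably many independent uniforms per vertex, used for the successive random sets $U$ and the successive optimally-coupled resamplings). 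The exponential contraction then gives, via Borel--Cantelli, that the state at each vertex stabilises after finitely many steps almost surely; the pointwise limit is therefore a measurable, ${\rm Aut}(T_d^*)$-equivariant function of the common i.i.d.\ source, hence lies in $F_d(S)$ directly. No closedness of $F_d(S)$ is needed.

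Your continuous-time argument can be repaired in the same way: rather than producing a sequence $\mu_t\in F_d(S)$ and invoking closedness, run the dynamics from an i.i.d.\ initial state using i.i.d.\ clocks and update randomness, couple the time-$n$ and time-$(n+1)$ configurations so that the disagreement probability at a fixed vertex is at most $Ce^{-(1-dD)n}$, apply Borel--Cantelli to get almost-sure stabilisation at every vertex, and take the measurable pointwise limit. That limit is then a factor of the same i.i.d.\ source.
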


This theorem is 
heuristically in accordance with the results of Bubley and Dyer \cite{pathcoupling}, who proved fast mixing of the Glauber 
dynamics if the condition $D<1/d$ holds.  Moveover, this condition has other consequences for correlation decay and the 
uniqueness of the Gibbs measure under various circumstances \cite{dobrushin, lovasz, sokal, weitz}. However, we do not know in general 
whether fast mixing or the uniqueness of the Gibbs measure implies that the branching Markov process is  factor of i.i.d.

\subsection{Poisson Glauber dynamics on $T_d$}

When the vertex set of the underlying graph is finite, as we have already seen in Subsection \ref{finiteglaub}, it is easy to define the Glauber dynamics. 
From now on we get back to the infinite $d$-regular tree, where it is not possible to choose a vertex uniformly at 
random, and perform Glauber dynamics step by step this way. 
In this subsection we give a heuristic description of the continuous time Glauber dynamics on the infinite tree for motivation. However, for our purposes the discrete version defined in the next subsection is 
more convenient, hence we omit the precise details of the definition of the continuous time model.

We assign independent Poisson processes with rate 1 to the vertices of the tree. That is, each vertex has a 
sequence of random times when it wakes up. At the beginning, at time zero, the vertices are in random 
states chosen independently and uniformly from the finite state space $S$. When a vertex wakes up, it performs a single 
Glauber step defined earlier. This depends only on the state of the neighbors of the 
vertex. However, to know these states, we have to know what has happened when the neighbors have performed Glauber steps earlier. 
This continues, hence it is not trivial whether this process is well-defined. To see this, one can check that the 
expectation of the number of Glauber steps that effect the randomization of a vertex waking up is finite. 

This argument could be made precise (see e.g. \cite[Theorem 1]{howard} for the definition of joint distribution of the Poisson processes on $T_3$). The advantage of the continuous time Glauber dynamics is the fact that 
the probability that neighbors wake up at the same time is zero. When we define the discrete time Glauber step 
in the next subsection, we will have to pay attention to avoid the event that neighbors are waking up simultaneously.

\subsection{The factor of i.i.d. Glauber step on $T_d$}

As we have seen in Subsection \ref{finiteglaub}, the single Glauber step for finite graphs maps each configuration in 
$S^{V(G)}$ to a random configuration. Now we are working with the infinite $d$-regular tree $T_d$, hence 
we deal with random processes, which are probability distributions on $S^{V(T_d)}$. We 
will describe a way of performing Glauber steps simultaneosly at different vertices such that our procedure produces factor of i.i.d. 
processes from factor of i.i.d. processes. 

Given a configuration  $\omega \in S^{V(T_d)}$, which is a labelling of 
the vertices of the $d$-regular tree with labels from the finite state space $S$ of the Markov chain, we will perform a 
single Glauber step to get a random configuration $G\omega$ in $S^{V(T_d)}$. Fix the 
transition matrix $Q$. The scheme is 
the following; we give the details afterwards. 
\begin{enumerate}
\item Choose an invariant random subset $U$ of $V(T_d)$ such that it has positive density  and it does not contain any two vertices of distance less than 3. 
\item For each vertex $v\in U$ perform the usual Glauber step at $v$: randomize the state of vertex $v$ according to 
the conditional distribution with respect to the states of its neighbours.
\end{enumerate}

More precisely, for the first part we need the following lemma. 

\begin{lemma} \label{ebred}It is possible to find an invariant random subset $U$ 
of $V(T_d)$ such that \begin{itemize}
\item it is factor of i.i.d.: the distribution of the indicator function of $U$ is in $F_d(\lbrace 0,1\rbrace)$;
\item it has positive density: the probability 
that the root $o$ is in $U$ is positive;
\item it does not contain any two vertices of distance less than 3. 
\end{itemize}
\end{lemma}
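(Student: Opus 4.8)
The plan is to realize $U$ as the set of strict local maxima, over balls of radius $2$, of an i.i.d.\ labelling. Concretely, place i.i.d.\ uniform $[0,1]$ random variables $\{x_v\}_{v\in V(T_d)}$ on the vertices (this is exactly the base process $\mu$ in the definition of factor of i.i.d.), and declare $v\in U$ precisely when $x_v > x_w$ for every $w$ with $0 < \mathrm{dist}(v,w)\le 2$.

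First I would check that this is a factor of i.i.d.\ process. The indicator of the event $\{v\in U\}$ depends only on the restriction of $\{x_w\}$ to the ball of radius $2$ around $v$, a finite set of coordinates; placing the root at $v$, this indicator is a Borel measurable function on $[0,1]^{V(T_d^*)}$ that is invariant under ${\rm Aut}(T_d^*)$, since it refers only to the rooted metric structure. Hence the indicator of $U$ is a factor of i.i.d.\ process, i.e.\ lies in $F_d(\{0,1\})$.

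Next I would verify the separation property. Suppose $v,v'\in U$ are distinct with $\mathrm{dist}(v,v')\le 2$. Then $v'$ lies in the radius-$2$ ball around $v$, so the defining condition for $v\in U$ forces $x_v > x_{v'}$; by symmetry $x_{v'} > x_v$, a contradiction. Thus any two elements of $U$ are at distance at least $3$, which is exactly the requirement that $U$ contains no two vertices of distance less than $3$. For positive density, let $N=1+d+d(d-1)=1+d^2$ be the number of vertices in the radius-$2$ ball around the root $o$. Since the $x_w$ are i.i.d.\ with a continuous distribution, ties occur with probability zero, and by exchangeability each vertex of that ball is equally likely to carry the strict maximum; hence $\mathbb{P}(o\in U)=1/N>0$.

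There is no serious obstacle here; the only points needing care are (i) using radius $2$ rather than radius $1$, since local maxima over radius-$1$ balls can still sit at distance $2$ from one another and so would fail the separation by distance $\ge 3$, and (ii) invoking the continuity of the labels, so that the ``strict maximum'' event is tie-free almost surely and the exchangeability computation of the density is valid.
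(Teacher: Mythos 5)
Your construction is exactly the one in the paper: a vertex joins $U$ when its i.i.d.\ uniform label exceeds all labels in its radius-$2$ ball, and the verification of the three properties proceeds the same way. Your additional details (the tie-free/exchangeability computation giving density $1/(1+d^2)$ and the explicit distance argument) are correct and only make the paper's brief proof more explicit.
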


\begin{proof} We start with $[0,1]^{V(T_d)}$ endowed with $\mu$, the product measure of the uniform distributions on the 
interval $[0,1]$. That is, vertices have independent and uniformly distributed labels from $[0,1]$. 

A vertex $v\in V(T_d)$ will be in $U$ if its label is larger than the labels of the vertices in its neighbourhood of 
radius 2. That is, for $\omega\in [0,1]^{V(T_d)}$ we set $f(\omega)=1$ if $\omega$ at the root $o$ is larger than $\omega_u$ for all $u\in V(T_d)$  at distance at most 2 from the root. Otherwise $f(\omega)=0$. Then we get the characteristic function of $U$ by placing the root to each vertex and applying $f$. This is a factor of i.i.d. process satisfying all conditions. \hfill $\square$

\end{proof}

\medskip

This lemma ensures that we can perform the first part of the Glauber step as a factor of i.i.d. process. 
As for the second part, we just refer to the definition of the Glauber step at a single vertex: each vertex $v\in U$ randomizes its 
state given the state of its neighbors and according to the distribution of the branching Markov process 
constrained on the finite subset $v\cup N(v)$. Since the distance of any two vertices 
in $U$ is at least 3, these randomizations can be performed simoultaneously and independently.

It is straightforward to  extend the definition of the Glauber step to a map from the set of probability measures on $S^{V(T_d)}$ to 
itself. Namely, choose a random configuration from $S^{V(T_d)}$ according to the given measure, and perform the 
Glauber step described above. This gives a new probability measure on $S^{V(T_d)}$. It is also easy to see 
that if we apply this for an invariant probability measure, then the resulting measure will also be invariant. 
Hence we have extended the definition of the Glauber step to a transformation of the form $G: I_d(S)\rightarrow I_d(S)$.

Moreover, note that if  $\nu$ is factor of i.i.d., then $G(\nu)$ is also factor of i.i.d., 
since the set of vertices performing Glauber steps is chosen by a factor of i.i.d. process by Lemma \ref{ebred}, and 
Glauber steps depend only on the state of the neighbors of these vertices.

\subsection{The invariance of the branching Markov process for the Glauber step} 

In order to prove Theorem \ref{thm1}, we will need the fact that the Glauber step defined above does not change 
the distribution of the branching Markov process. 

\begin{proposition}[Invariance]  \label{prop:inv}
If $\nu_Q\in I_d(S)$ is the branching Markov process with transition matrix $Q$
then it is a fixed point of the Glauber step corresponding to $Q$ and $d$ (i.e. $G(\nu_Q)=\nu_Q$.)
\end{proposition}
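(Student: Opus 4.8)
The plan is to prove invariance by checking that $G$ preserves every finite-dimensional marginal of $\nu_Q$, which is enough because a process in $I_d(S)$ is determined by its restrictions to finite subsets of $V(T_d)$ and the Glauber step is local: the value of $(G\omega)$ on a finite region depends only on $\omega$ and $U$ in a bounded enlargement of that region. The engine of the argument is the spatial Markov property of $\nu_Q$. First I would record the explicit form of $\nu_Q$ on a finite subtree $B$: writing $\pi$ for the (stationary) distribution of the root, the law of $\omega|_B$, when $B$ is rooted at any vertex $\rho$, is proportional to $\pi(\omega_\rho)\prod_{(u\to w)}Q(\omega_u,\omega_w)$ over the edges of $B$ directed away from $\rho$. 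Using reversibility, $\pi(x)Q(x,y)=\pi(y)Q(y,x)$, this rewrites in a root-independent symmetric form, namely a product of an edge factor $\psi(\omega_u,\omega_w)=\pi(\omega_u)Q(\omega_u,\omega_w)$ over the undirected edges times a vertex factor depending on $\omega_v$ alone. From this I would read off that for any vertex $v$ the conditional law of $\omega_v$ given all other coordinates depends only on $\omega|_{N(v)}$ and equals $B_{v,\omega|_{N(v)}}$; that is, $\nu_Q$ is a Markov random field and the conditional distribution used in the Glauber step is exactly the full single-site conditional.

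Next I would show that a single heat-bath update preserves $\nu_Q$: replacing $\omega_v$ by a fresh sample from its own conditional law given the complement leaves the joint law unchanged, which is immediate once the previous step identifies $B_{v,\cdot}$ with the full conditional. To handle the simultaneous update over the random set $U$, I would condition on $U=U_0$ for an admissible set $U_0$ and observe that since no two vertices of $U_0$ are adjacent (distance $\ge 3$ is far more than needed here; an independent set already suffices), no edge of $T_d$ joins two vertices of $U_0$, and every neighbour of a vertex of $U_0$ lies in $V(T_d)\setminus U_0$. Hence, conditioned on $\omega|_{V(T_d)\setminus U_0}$, the symmetric product form above factorizes as a product over $v\in U_0$ of terms each involving only the free variable $\omega_v$ and the fixed values $\omega|_{N(v)}$. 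Therefore, under $\nu_Q$, the coordinates $\{\omega_v\}_{v\in U_0}$ are conditionally independent given $\omega|_{V(T_d)\setminus U_0}$, with $\omega_v\sim B_{v,\omega|_{N(v)}}$; resampling them all simultaneously and independently is exactly redrawing the block $\omega_{U_0}$ from its conditional law given the complement, which again preserves $\nu_Q$.

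Finally I would remove the conditioning on $U$: the set $U$ is produced by a factor of i.i.d. process from auxiliary i.i.d. labels and is therefore independent of $\omega$, so $G(\nu_Q)=\int(\text{law after updating with }U_0)\,dP(U_0)=\int \nu_Q\,dP(U_0)=\nu_Q$. I expect the main obstacle to be the bookkeeping in the second step: making the conditional factorization of $\nu_Q$ over the independent set $U_0$ precise on finite subtrees (the tree-Gibbs / Hammersley--Clifford factorization), checking that fixed boundary values possibly shared by two vertices of $U_0$ at distance $2$ enter only as constants, and confirming that the passage from finite marginals to the infinite tree is legitimate. Once $\nu_Q$ is written in the symmetric edge-product form, however, these verifications are routine.
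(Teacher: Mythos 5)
Your proposal is correct, and its engine is the same as the paper's: the spatial Markov property of $\nu_Q$ identifies $B_{v,\cdot}$ with the full single-site conditional, so a heat-bath update at one vertex preserves the measure. Where you diverge is in handling the simultaneous update over the infinite random set $U$. The paper applies single-vertex updates sequentially, notes they are genuinely independent because the distance-$3$ condition makes the neighbourhoods $N(v)$, $v\in U$, pairwise disjoint, and then passes to the infinite set by an exhaustion $U_n\uparrow V(T_d)$ chosen so that no updated vertex ever sees the boundary. You instead condition on $U=U_0$ and argue directly that $\{\omega_v\}_{v\in U_0}$ are conditionally independent given $\omega|_{V(T_d)\setminus U_0}$ (which only needs $U_0$ to be an independent set, since removing the complement disconnects the updated vertices in the tree), so the simultaneous resampling is a single block-Gibbs update from the correct conditional law. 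Your route is arguably cleaner and isolates the correct observation that distance $\ge 3$ is not needed for invariance (it is used elsewhere, in the contraction argument, to make the couplings at distinct updated vertices independent); the cost is that you must make sense of conditioning on the infinite complement, i.e.\ work with regular conditional probabilities or reduce to finite marginals, which is exactly the bookkeeping you flag and which the paper's finite exhaustion sidesteps. Both arguments are complete once that step is written out.
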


\begin{proof}
First we check that the Glauber step at a single vertex  $u$ does not change the 
distribution of the branching Markov process. It follows from the fact that the distribution of the state of $u$ and the joint distribution of the states at $V(T_d)\setminus \{u\cup N(u)\}$ are conditionally independent given the states of the vertices in $N(u)$.  

Let $U$ be the set of vertices performing Glauber steps when we apply $G$. Since these vertices are far away from each other 
(their distance is at least 3 according to Lemma \ref{ebred}), the randomizations are independent, and therefore, since the Glauber step at a 
single vertex does not change the distribution, it is also invariant for finitely many steps. On the other hand, for arbitrary $U$ it is possible to 
find finite sets of vertices $U_n$ such that (i) $U_n\subseteq U_{n+1}$ for all $n$; (ii) $\bigcup_{n=1}^{\infty} U_n=V(T_d)$; (iii) if a vertex is in $U\cap U_n$, then all its neighbors are in $U_n$. For example, one can use balls of appropriate radius with a few vertices 
omitted from the boundary. Since every $U_n$ contains finitely many vertices, and vertices on the boundary of $U_n$ do not perform Glauber steps, the distribution of the branching Markov process is invariant for the 
Glauber steps at vertices $U\cap U_n$. This also implies that the branching Markov process is invariant for $G$, when we perform Glauber steps at 
the vertices of $U$ simultaneously.
\hfill $\square$
\end{proof}

\subsection{The Glauber step as a contraction}

We will prove that if the Dobrushin coefficient (Definition \ref{def:dobr}) is small enough, then the factor of i.i.d. Glauber step is a contraction 
with respect to the metric 
$m_c$ derived from the Hamming distance on $S$.
First we need a notation and a lemma. 

\begin{definition}[Coupling Hamming distance] Let $S$ be a finite state space with the discrete topology and with the Hamming distance: $m(s,s)=0$ for all $s\in S$ and $m(s,t)=1$ if $s\neq t$. We denote by $h_c$ the metric defined by equation \eqref{eq:metric} on $I_d(S)$ corresponding to the Hamming distance (see Section \ref{joining}). 
\end{definition}

Recall that $B_{v,\omega}$ is the distribution of the state of vertex $v$ at the Glauber step if the state of its 
neighbors are given by $\omega\in S^{N(v)}$.

\begin{lemma}\label{lem:pc}Suppose that we have a branching Markov process on $T_d$ with Dobrushin coefficient $D$. Fix 
$v\in V(T_d)$ and $\omega, \omega'\in S^{N(v)}$ such that  $|\lbrace u\in N(v): \omega(u)\neq \omega'(u)\rbrace|=k$. Then we have that 
\[d_{TV}(B_{v,\omega}, B_{v, \omega'})\leq k D.\]

\end{lemma}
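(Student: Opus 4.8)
The plan is to reduce the multi-coordinate bound to the single-coordinate definition of the Dobrushin coefficient by interpolating along a path of configurations that flip one disagreeing coordinate at a time, and then invoke the triangle inequality for the total variation metric. This is exactly the elementary input underlying the path-coupling philosophy of Bubley and Dyer alluded to in the text.

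Concretely, I would first enumerate the disagreement set. Write $\{u \in N(v) : \omega(u) \neq \omega'(u)\} = \{u_1, \dots, u_k\}$, and build a chain of intermediate configurations $\omega = \omega_0, \omega_1, \dots, \omega_k = \omega'$ in $S^{N(v)}$, where $\omega_i$ is obtained from $\omega_{i-1}$ by resetting the value at $u_i$ from $\omega(u_i)$ to $\omega'(u_i)$ and leaving all other coordinates unchanged. By construction each consecutive pair $\omega_{i-1}, \omega_i$ differs in exactly one vertex of $N(v)$, namely $u_i$, and $\omega_0 = \omega$, $\omega_k = \omega'$.

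Next I would apply the definition of the Dobrushin coefficient (Definition \ref{def:dobr}) to each consecutive pair. Since $\omega_{i-1}$ and $\omega_i$ agree except at a single neighbor of $v$, the defining supremum immediately gives
\[d_{TV}(B_{v,\omega_{i-1}}, B_{v,\omega_i}) \leq D \qquad (1 \leq i \leq k).\]
Finally, using that $d_{TV}$ is a genuine metric on probability distributions (so the triangle inequality holds), I would telescope along the chain:
\[d_{TV}(B_{v,\omega}, B_{v,\omega'}) \leq \sum_{i=1}^{k} d_{TV}(B_{v,\omega_{i-1}}, B_{v,\omega_i}) \leq kD,\]
which is the claim.

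There is no substantial obstacle here: the statement is essentially the observation that a distance controlled under single-coordinate perturbations accumulates linearly under $k$ simultaneous perturbations. The only points requiring a modicum of care are verifying that each intermediate $\omega_i$ is a legitimate element of $S^{N(v)}$ (immediate from the construction) and that consecutive configurations truly differ in precisely one coordinate so that the single-step bound $D$ applies; both are handled automatically by flipping the disagreeing coordinates one at a time in a fixed order.
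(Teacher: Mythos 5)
Your proposal is correct and is exactly the argument the paper intends: the paper's proof reads ``the case $k=1$ is trivial; the general case follows by induction using the triangle inequality,'' which is precisely your telescoping chain of single-coordinate flips. Your write-up simply makes the interpolation explicit.
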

\begin{proof} The case $k=1$ is trivial. The general case follows by induction using the triangle inequality. \hfill $\square$
\end{proof}

\medskip

Now we can prove that the factor of i.i.d. Glauber step is a contraction if the Dobrushin condition holds.
\begin{proposition}\label{prop:contract}
If $D<{1/d}$, then $G: I_d(S)\rightarrow I_d(S)$ is a contraction with respect to the coupling Hamming distance $h_c$; that is, 
there exists $r<1$ such that
\[h_c(G(\nu_1), G(\nu_2))<r\cdot h_c(\nu_1, \nu_2)\qquad \text{ for all } \nu_1, \nu_2\in I_d(S).\]
\end{proposition}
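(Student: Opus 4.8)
The plan is to construct, for any two invariant processes $\nu_1,\nu_2\in I_d(S)$, an optimal (or near-optimal) joining realizing $h_c(\nu_1,\nu_2)$, and then produce a joining of the images $G(\nu_1),G(\nu_2)$ by coupling the two Glauber steps on top of it. The contraction will come from bounding the expected disagreement at a fixed vertex $v$ after the step, using the Dobrushin bound of Lemma \ref{lem:pc} together with the fact that the random set $U$ of updated vertices has a fixed positive density and that updates are spatially independent (distance $\geq 3$ apart, by Lemma \ref{ebred}).

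\textbf{Setup of the coupled step.} First I would fix a joining $\psi\in C(\nu_1,\nu_2)$ with $\mathbb{E}(m(\psi|_v))$ arbitrarily close to $h_c(\nu_1,\nu_2)$; write $p=h_c(\nu_1,\nu_2)$ for the per-vertex disagreement probability under $\psi$, which by invariance is the same at every vertex. The key is that the factor of i.i.d. scheme of Lemma \ref{ebred} chooses the update set $U$ from the \emph{auxiliary} $[0,1]$-labels, so I can use the \emph{same} realization of $U$ on both coordinates of the joining; this keeps the coupled configuration a legitimate invariant joining of $G(\nu_1)$ and $G(\nu_2)$. Then, at each $v\in U$, I couple the two Glauber steps $B_{v,\omega}$ and $B_{v,\omega'}$ (where $\omega,\omega'$ are the two neighborhood configurations under $\psi$) using the optimal total-variation coupling that attains $d_{TV}(B_{v,\omega},B_{v,\omega'})=\inf\mathbb{P}(X\neq Y)$; at vertices not in $U$ the two coordinates simply retain their old (possibly disagreeing) states.

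\textbf{The disagreement bound at $v$.} Next I would estimate the probability that the two coordinates disagree at a fixed vertex $v$ after the step, by conditioning on whether $v\in U$. Let $\alpha=\mathbb{P}(o\in U)>0$ be the density. If $v\notin U$, the disagreement at $v$ is unchanged, contributing probability $p$. If $v\in U$, the new states at $v$ are drawn from the optimally coupled $B_{v,\omega},B_{v,\omega'}$, so they disagree with probability $d_{TV}(B_{v,\omega},B_{v,\omega'})\leq D\cdot K$, where $K$ is the number of disagreeing neighbors of $v$ under $\psi$; taking expectations and using $\mathbb{E}(K)=d\,p$ (each of the $d$ neighbors disagrees with probability $p$ by invariance) gives an expected contribution at most $D\,d\,p$ on the event $\{v\in U\}$. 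Assembling the two cases yields a new per-vertex disagreement probability of at most $(1-\alpha)p+\alpha\,D\,d\,p=\bigl(1-\alpha(1-Dd)\bigr)p$. Since $D<1/d$ forces $Dd<1$, the factor $r:=1-\alpha(1-Dd)$ satisfies $r<1$, and since this coupled process is an admissible joining of $G(\nu_1)$ and $G(\nu_2)$, we get $h_c(G(\nu_1),G(\nu_2))\leq r\,p=r\,h_c(\nu_1,\nu_2)$, with $r$ independent of $\nu_1,\nu_2$.

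\textbf{The main obstacle} I expect is the interplay between the randomness of $U$ and the neighborhood-dependence of the Glauber step when verifying that the construction genuinely defines an invariant joining of $G(\nu_1)$ and $G(\nu_2)$ whose $v$-marginal disagreement is controlled as claimed. The subtlety is that on the event $\{v\in U\}$ the coupling at $v$ depends on the neighbor-states, which are themselves correlated with whether neighbors lie in $U$; the distance-$\geq 3$ separation from Lemma \ref{ebred} is exactly what guarantees that the update at $v$ does not interfere with updates at neighboring vertices, so the conditional analysis above is valid and the randomizations can be treated independently. A secondary technical point is passing from a near-optimal joining $\psi$ to the infimum defining $h_c$, which I would handle by taking $\psi$ within $\epsilon$ of optimal and letting $\epsilon\to 0$, absorbing the $\epsilon$ into the final inequality. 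Once invariance of the coupled process and the per-vertex bound are established, the strict contraction constant $r<1$ follows immediately from $D<1/d$.
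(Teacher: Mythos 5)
Your proposal is correct and follows essentially the same route as the paper's proof: the same construction of a joining of $G(\nu_1)$ and $G(\nu_2)$ using a shared update set $U$ independent of the near-optimal joining, optimal total-variation couplings at updated vertices via Lemma \ref{lem:pc}, and the same case split on $v\in U$ yielding the contraction factor $1-p(1-dD)$ (the paper simply absorbs the near-optimality slack $(1+\varepsilon)$ into $r$ up front rather than sending it to zero at the end). No gaps.
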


\begin{proof} Choose  $\varepsilon>0$ such that $r:=(1+\varepsilon)(1-p+pdD)<1$, where $p>0$ is the density of $U$ in the Glauber step. This is possible if $D<1/d$. Fix $\nu_1, \nu_2\in I_d(S)$. Denote their distance $h_c(\nu_1, \nu_2)$  by $h$. By the definition of the metric $h_c$, there is 
a  joining $\Psi$ of $\nu_1$ and $\nu_2$ such that  $\mathbb E(m(\Psi|_v))<(1+\varepsilon)h$ holds ayt any given vertex $v$, where $m$ denotes the Hamming distance on $S$. 

Our goal is to construct a joining $\Psi'$ of $G(\nu_1)$ and $G(\nu_2)$ such that $\mathbb{E}(m(\Psi'|_v))\leq rh$.
We construct this joining in a way that the set of vertices that perform the Glauber step are the same for $\nu_1$ and $\nu_2$. 
As a first step we choose an invariant random set $U$ according to Lemma \ref{ebred} such that $U$ is independent from $\Psi$. 

We define $\Psi'$ from $\Psi$ and $U$ as follows. When we randomize the state of a given vertex $v\in U$, conditionally on the states of vertices in $N(v)$, we use the best possible coupling of the conditional distributions in total variation (the probability that the two random variables are different is minimal). Since we deal with finite number of configurations and a discrete probability space for fixed $u$, this is sensible. For the distinct vertices in $U$ we join these couplings independently to get $\Psi'$ for a fixed $U$. This defines $\Psi'$ on the whole extended probability space. 

Since $U$ is invariant and the randomizations depend only on the states of the neighbors, $\Psi'$ is also invariant. 
It is clear that the marginal distributions $\nu_1'$ and $\nu_2'$ of $\Psi'$ are identical to $G(\nu_1)$ 
and $G(\nu_2)$, respectively. 

Now we give an upper bound on the coupling Hamming distance of $\nu_1'$ and $\nu_2'$. 

Fix $v\in V(T_d)$. The probability that $v\in U$ is  $p$ by definition. With probability $1-p$ its state is not changed, therefore there is a difference in $\Psi'$ with probability $E(m(\Psi|_v))<h(1+\varepsilon)$; this is 
just the density of differences in the original process. Otherwise a Glauber step is performed at $v$. The expected value of the number of differences in $N(v)$ between the random configurations according to $\nu_1$ and $\nu_2$ is $dE(m(\Psi|_u))<dh(1+\varepsilon)$. By Lemma \ref{lem:pc}, if the number of 
differences is $k$, then it is possible to couple the conditional distributions such that the probability that 
the state of $v$ is a difference is less than or equal to $kD$. When we defined $\Psi'$, we have chosen the 
best couplings with respect to total variation. Therefore the probability that we see a difference in $\Psi'$ is less than 
$(1-p)h(1+\varepsilon)+pdhD(1+\varepsilon).$ By the choice of $\varepsilon$ (where we used the 
condition $D<1/d$) this is less than  $h$, and we get that 
\[h_c(G(\nu_1), G(\nu_2))<(1-p)h(1+\varepsilon)+pdhD(1+\varepsilon)=rh.\]
 \hfill $\square$

\end{proof}

\medskip
Now, putting Proposition \ref{prop:inv} and Proposition \ref{prop:contract} together one can easily  show that the branching Markov process is 
a limit of factor of i.i.d. (it belongs to $L_d(S)$) with respect to the $\bar d$-metric if the Dobrushin coefficient is smaller than $1/d$. 

Namely, we start with an i.i.d. labelling of the vertices of the tree by labels from $S$; this is measure $\nu_0$. We have checked that if a given invariant process is factor of 
i.i.d., then its image under the Glauber step $G$ is also factor of i.i.d. Therefore if we apply $G$ finitely many times, we also get a factor of i.i.d. process. 
By Proposition \ref{prop:inv} the branching Markov process is a fixed point of $G$. A contraction can not have more than one fixed points, and hence it is also clear that $G^n \nu_0$ (which is a factor of i.i.d. process) converges to the branching Markov process in the $\bar d$-metric exponentially fast.  

However, in the next section we will prove the stronger statement that the branching Markov process is itself a factor of i.i.d. process if $D<1/d$ holds.

\subsection{Proof of Theorem \ref{thm1}}

\label{proofthm1}

  Recall that the Glauber step $G$ can be 
 defined as a map from the set of invariant processes to itself. It   is 
 a contraction with respect to the $\bar d$-metric, whose unique fixed point is the corresponding branching Markov process. Moreover, it maps factor of i.i.d. processes to factor of i.i.d. processes.

\begin{proof} First we define an operation $T$ on sequences of processes that are already coupled to each other somehow. 
More precisely, let $(J_1, J_2, \ldots)$ be a (possibly infinite) sequence of invariant processes from $I_d(S)$ defined on the same 
probability space.  Then $^T(J_1, J_2, \ldots)$ will also be a sequence of invariant processes. The distribution of 
the $k$th term of   $^T(J_1, J_2, \ldots)$ will be identical to the distribution of $G(J_k)$. The main point is the 
coupling of these processes. First we couple $G(J_1)$ and $G(J_2)$ such that, at each vertex where a Glauber step is performed, the coupling realizes the total 
variation distance of the conditional distributions given the states of the neighbors.  Then we couple $G(J_3)$ to the already existing probability space such that it is optimally coupled to $G(J_2)$ with respect to the total variation distance. 
We continue this, from the 
left to the right, we always couple the next term to the previous one with the coupling that realizes the total variation distance at each vertex.

Let $I$ be the i.i.d. process on ${T_d}$ whose marginal distributions at the vertices are uniform on $S$. We define
\[I^{(n)}=\,^T\!(I,\, ^T\!(I,\, ^T\!(I, \ldots, \,^T\!(I,\, ^T\!(I))))),\] 
with $n$ copies of $I$ as follows. We already know that $T$ maps any sequence of invariant processes to 
another sequence of processes of the same length. When we have a sequence, and we write an $I$ before it, 
we mean the sequence consisting of a copy of $I$ and the original sequence coupled to a common probability 
space independently. We get a longer sequence, and we apply $T$ to this. Then again, we add an independent copy of $I$, and apply $T$. We repeat this $n$ times to get $I^{(n)}$.
It is also clear that the $k$th term of this sequence of length $n$ is identical in distribution to $G^k I$. Therefore it belongs to $F_d(S)$.

When we are producing this sequence, we are using the following probability spaces that are coupled to each other. 
First, we need the spaces where these copies of $I$ are defined. Then, when we apply the Glauber step, we need to choose the random set of vertices waking up, like in Lemma \ref{ebred}. Finally, there are the moves when the given vertices randomize their current state with the appropriate coupling. 

The next step is to show that $I^{(\infty)}$ also makes sense. It will have infinitely many coordinates. 
Since we performed the coupling procedure from the left to the right, if we want to determine the $k$th term of 
$I^{(\infty)}$, then it is sufficient to deal with the first $k$ copies of $I$ and choose the optimal couplings defined above 
finitely many times. Hence the whole sequence is  well defined.
 
We go further and we will see that $I^{(\infty)}=(H_1, H_2, \ldots)$ is a factor of i.i.d. process. 
In the construction of $I^{\infty}$ we use the following independent random variables, uniformly distributed on $[0,1]$: 
\begin{enumerate}
\item for each application of $T$, we need the random set from Lemma \ref{ebred}; this requires an independent copy of $[0,1]$ associated with each vertex of $T_d$; 
\item for each application of $T$, we need countably many copies of $[0,1]$ associated with each vertex to perform the Glauber steps and their couplings.
\end{enumerate}
It is easy to see that each coordinate of $I^{\infty}$ depends measurably on finitely many of these random variables. It follows that $I^{\infty}$ is factor of i.i.d.

We claim that for each vertex $v\in V(T_d)$ 
the sequence $(H_k(v))$ is constant except for finitely many terms almost surely, and the process $\nu$ defined by $v\mapsto \lim_{k\rightarrow\infty} H_k(v)$ is 
a factor of i.i.d. process.  Let $p_k$ be the probability that the root has a different state in $H_k$ and $H_{k+1}$. Since the Glauber step is a contraction with 
respect to the Hamming distance, $p_k$ tends to 0 exponentially fast. A Borel--Cantelli argument implies that the state of a given vertex stabilizes after 
finitely many steps. Then it follows that the limit is measurable and so it is factor of i.i.d.

Finally, since $H_k$ converges to the fixed point of $G$, which is the 
branching Markov process by Proposition \ref{prop:inv}, we get that the branching Markov process is factor of i.i.d. \hfill $\square$
\end{proof}

\section{Entropy inequalities}

\label{entropy}

In this section we will formulate necessary conditions for invariant processes to be typical based on 
entropy. These inequalities imply necessary conditions for a process to be factor of i.i.d. Note that these kind of inequalities were used for various purposes. They are closely related to the results of Bowen \cite{lewis} on $f$-invariant for factors of shifts on free groups (e.g. for the factor of i.i.d. case when $d$ is even).  Rahman and Vir\'ag also use this tool for examining
independent sets in factor of i.i.d. processes on the $d$-regular tree; see Section 2 of \cite{mustazee}.

Now we define configuration entropy as we will use it later on. Recall that if $\mu$ is a 
probability distribution on a finite set $S$ of atoms with probabilities $p_1,p_2,\dots,p_K$, then its entropy is defined by $h(\mu)=-\sum_{i=1}^K p_i \ln p_i$. (If a probability $p_i$ is zero, then the corresponding term is also defined to be equal to zero.) We also define $H(\mu):=e^{h(\mu)}$. Assume that a finite set of size $n$ has an $S$-coloring with color distribution $\mu$. Let $H(\mu,n)$ denote the number of such colorings. Then $H(\mu,n)=H(\mu)^{n(1+o(1))}$ as $n$ tends to infinity.

\begin{definition}[Configuration entropy] Let $\nu\in I_d(S)$ be an invariant measure on $S$-valued processes on $T_d$, where 
$S$ is a finite set. Fix a finite set $F\subset T_d$. The measure $\nu$ induces a probability distribution on the $S$-colorings of $F$ (that is, on the finite set $S^{V(F)}$). Let the configuration entropy $h(F)$ be the entropy of this probability 
distribution. 
\end{definition} 

The invariance of $\nu$ implies that $h(F)=h(F')$ whenever there is an automorphism of $T_d$ taking $F$ to $F'$. This means that it makes sense to talk about the entropy of a given configuration in $T_d$ (for example an edge or a star) without specifying where the given configuration is in $T_d$.  

We prove two entropy inequalities, which hold for every typical process, and 
hence for every universal and factor of i.i.d. process by Lemma \ref{lem:bovul}.  

Recall from Section \ref{invproc} that $R_d$ is the set of invariant processes that 
can be modelled on random $d$-regular graphs. We denote by $h(\begin{picture}(4,10)
\put(2,0){\line(0,1){6}}
\put(2,0){\circle*{2}}
\put(2,6){\circle*{2}}
\end{picture})$ the edge entropy, that is, $h(F)$  
when the finite graph is an edge, and $h(\begin{picture}(4,10)
\put(2,3){\circle*{2}}
\end{picture})$ will be the vertex entropy, where $F$ is a single vertex.

\begin{theorem}\label{edgevertex} For any typical process $\nu\in R_d$ the following holds:
\[\frac d2 h(\begin{picture}(4,10)
\put(2,0){\line(0,1){6}}
\put(2,0){\circle*{2}}
\put(2,6){\circle*{2}}
\end{picture})\geq(d-1)h(\begin{picture}(4,10)
\put(2,3){\circle*{2}}
\end{picture}).\]
\end{theorem}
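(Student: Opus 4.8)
The plan is to prove the inequality by a first-moment (expected number of colorings) computation carried out in the configuration model, using the definition of typicality to guarantee that the relevant colorings actually exist. Write $\mu_V$ for the vertex marginal of $\nu$ and $\mu_E$ for its (necessarily symmetric) edge marginal on the color set $S$, and denote the vertex entropy by $h_V$ and the edge entropy by $h_E$; note that $\mu_V$ is automatically the marginal of $\mu_E$. Since $\nu\in R_d$, by definition there is a sequence $n_i$ so that, with probability $1$, the random graphs $\mathbb G_{n_i}$ carry colorings whose Benjamini--Schramm limit is $\nu$. In particular the radius-$1$ statistics converge, so for every $\varepsilon>0$ and all large $i$ the graph $\mathbb G_{n_i}$ admits, with probability tending to $1$, a coloring whose empirical vertex and edge distributions are $\varepsilon$-close to $\mu_V$ and $\mu_E$.

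First I would let $Z_n=Z_n(\varepsilon)$ be the number of colorings $f:V(\mathbb G_n)\to S$ whose vertex and edge statistics lie within $\varepsilon$ of $\mu_V$ and $\mu_E$. The observation above says $\mathbb P(Z_{n_i}\ge 1)\to 1$, and since $Z_n$ is a nonnegative integer, $\mathbb E(Z_n)\ge \mathbb P(Z_n\ge 1)$ stays bounded away from $0$. Equivalently (the contrapositive of Markov's inequality: if $\mathbb E(Z_n)$ decayed exponentially, then $\mathbb P(Z_n\ge1)\le \mathbb E(Z_n)$ would too, contradicting typicality), the expected count cannot decay exponentially, so $\liminf_i \frac{1}{n_i}\log \mathbb E(Z_{n_i})\ge 0$. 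The problem thus reduces to computing the exponential growth rate of $\mathbb E(Z_n)$.

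To compute $\mathbb E(Z_n)$ I would realize $\mathbb G_n$ through the configuration model (a uniform perfect matching on $nd$ half-edges), which changes probabilities only by the bounded factor $\mathbb P(\text{simple})=\Theta(1)$ and hence does not affect the exponential rate. Since the vertex statistics of a coloring do not depend on the matching, $\mathbb E(Z_n)$ factorizes as the number of colorings with vertex profile $\approx\mu_V$, which is $H(\mu_V)^{n(1+o(1))}=e^{n h_V(1+o(1))}$, times the probability that a uniform matching realizes edge profile $\approx\mu_E$ given such a coloring. For the latter one counts matchings with a prescribed color-pairing profile: writing $q_{\{s,t\}}$ for the number of edges with endpoint colors $s\neq t$ and $q_{ss}$ for monochromatic edges, splitting the $d\,n_s$ half-edges of each color by the color of their partner and then matching across and within color classes gives
\[
\#\{\text{matchings with profile}\}=\frac{\prod_s (d\,n_s)!}{\prod_{s<t} q_{\{s,t\}}!\,\prod_s 2^{q_{ss}}\,q_{ss}!},
\]
to be divided by the total number $(nd-1)!!$ of matchings.

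A Stirling estimate then collapses the leading $n\log n$ terms (using $2N=nd$ and $N=nd/2$ for the edge count $N$, they cancel against $(nd-1)!!$), and after invoking the consistency $\sum_t \mu_E(s,t)=\mu_V(s)$ the surviving terms assemble into $\frac{nd}{2}h_E-d n\,h_V$, so that
\[
\tfrac1n\log \mathbb E(Z_n)\longrightarrow h_V+\Big(\tfrac d2 h_E-d\,h_V\Big)=\tfrac d2 h_E-(d-1)h_V.
\]
Combining this with $\liminf \frac1n\log \mathbb E(Z_n)\ge0$ yields exactly $\frac d2 h_E\ge (d-1)h_V$. Two points demand care and are where the real work lies. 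The minor one is passing from the $\varepsilon$-window and the discretization of admissible profiles to the exact entropies: since the number of profiles is polynomial in $n$ and the entropy functional is continuous, one fixes $\varepsilon$, runs the estimate, and lets $\varepsilon\to0$, but this must be done honestly. The main obstacle is the Stirling bookkeeping itself: the cross-color and monochromatic edges enter through genuinely different combinatorial factors, and one must verify that, after using the consistency relation, all spurious linear-in-$n$ terms cancel and the remainder is precisely $\frac{nd}{2}h_E-d n\,h_V$.
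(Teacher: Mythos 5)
Your proposal is correct and follows essentially the same route as the paper: the paper likewise performs a first-moment count of colored $d$-regular graphs in the configuration model (your $\mathbb E(Z_n)$ is exactly the paper's ratio $|G_{n,\varepsilon}|/t_n$), uses typicality to bound it below, and reads off the entropy inequality from the exponential rate. The only difference is cosmetic: where you propose explicit Stirling bookkeeping for the number of matchings with a prescribed edge-color profile, the paper obtains the same count by a short double-counting argument (Lemma \ref{entlem}), which hides the Stirling analysis inside the standard asymptotics $H(\mu,n)=H(\mu)^{n(1+o(1))}$.
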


Before proving Theorem \ref{edgevertex} we need a lemma. Let $PM(k)$ denote the number of perfect matchings on a set with $k$ elements. 

\begin{lemma}\label{entlem} Let $V$ and $S$ be finite sets where $|V|=n$. Let $\mu$ be a probability distribution on $S$ and let $\nu$ be a probability distribution on $S\times S$. Assume that $f:V\rightarrow S$ is a coloring of $V$ such that the color of a random element in $V$ has distribution $\mu$. Let $M_f$ be the set of perfect matchings on $V$ such that the pair of colors on the two endpoints of a random directed edge in the matching has distribution $\nu$. Assume that $M_f$ is not empty. Then $|M_f|=PM(n)H(\nu,n/2)H(\mu,n)^{-1}$.
\end{lemma}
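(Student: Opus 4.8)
The plan is to compute $|M_f|$ by an exact combinatorial count organized by color classes, and then to compare its exponential growth rate with that of $PM(n)H(\nu,n/2)H(\mu,n)^{-1}$, reading the claimed identity in the same exponential-order sense in which the configuration entropies $H(\cdot,N)=H(\cdot)^{N(1+o(1))}$ are defined.

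First I would fix notation. For $s\in S$ set $V_s=f^{-1}(s)$, so $|V_s|=n\mu(s)$ by the hypothesis on $f$. Every matching in $M_f$ realizes the same color-pair counts: writing $m_{s,t}$ for the number of undirected edges joining $V_s$ to $V_t$ when $s\neq t$, and $m_{s,s}$ for the number of edges inside $V_s$, the requirement that a random directed edge have distribution $\nu$ forces $m_{s,t}=n\nu(s,t)$ for $s\neq t$ and $m_{s,s}=n\nu(s,s)/2$, because each cross edge produces one directed edge of type $(s,t)$ and one of type $(t,s)$, while each monochromatic edge produces two of type $(s,s)$. The assumption $M_f\neq\emptyset$ is what guarantees that all these quantities are nonnegative integers, and, by counting tails of directed edges (each vertex is the tail of exactly one directed edge, so there are $n\mu(s)$ such with tail color $s$), that both marginals of $\nu$ equal $\mu$; in particular the counts below are well defined.

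Next I would count $|M_f|$ exactly. A matching in $M_f$ is assembled in three independent steps: (i) for each $s$, partition $V_s$ into blocks of sizes $m_{s,t}$ (for $t\neq s$) together with one block of size $2m_{s,s}$, contributing the multinomial coefficient $(n\mu(s))!\big/\big(\prod_{t\neq s}m_{s,t}!\,(2m_{s,s})!\big)$; (ii) for each unordered pair $\{s,t\}$ with $s\neq t$, biject the two earmarked blocks of common size $m_{s,t}$, contributing $m_{s,t}!$; and (iii) for each $s$, choose a perfect matching of the size-$2m_{s,s}$ block, contributing $PM(2m_{s,s})$. Multiplying these and using $PM(2m)=(2m)!/(2^{m}m!)$, the factors $(2m_{s,s})!$ cancel and the cross-term factorials collapse, leaving the closed form
\[|M_f|=\frac{\prod_s (n\mu(s))!}{\left(\prod_{s<t}(n\nu(s,t))!\right)\,2^{\sum_s n\nu(s,s)/2}\,\prod_s (n\nu(s,s)/2)!}.\]

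Finally I would take logarithms and apply Stirling's formula. The leading $\tfrac12\ln n-\tfrac12$ coming from $\frac1n\ln|M_f|$ should reproduce exactly $\frac1n\ln PM(n)$ (using $PM(n)=n!/(2^{n/2}(n/2)!)$), while the remaining entropy terms, after collecting the off-diagonal and diagonal contributions and absorbing the $2^{\sum_s n\nu(s,s)/2}$ factor into the diagonal part, should combine into $\tfrac12 h(\nu)-h(\mu)$, which is exactly $\frac1n\ln\big(H(\nu,n/2)H(\mu,n)^{-1}\big)$. I expect the one genuine subtlety to be this bookkeeping of the factor of two between directed, undirected, and monochromatic edges: the half-integer counts $n\nu(s,t)/2$ mean the displayed product and $PM(n)H(\nu,n/2)H(\mu,n)^{-1}$ are \emph{not} literally equal term by term, so the asserted identity must be understood as an equality of exponential orders (i.e.\ up to a factor $e^{o(n)}$), consistent with the way $H(\mu,n)=H(\mu)^{n(1+o(1))}$ is used throughout. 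Confirming that the super-exponential factor $PM(n)$ contributes precisely the $\tfrac12\ln n-\tfrac12$ and leaves no spurious entropy term behind is the single computation I would carry out with care.
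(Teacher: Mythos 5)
Your proposal is correct, but it takes a genuinely different route from the paper's. The paper disposes of the lemma with a two-line double count: it sets $M'=\bigcup_g M_g$, the union over all colorings $g$ of $V$ with color distribution $\mu$, observes that $|M_g|=|M_f|$ for every such $g$ by vertex-permutation symmetry, so $|M'|=H(\mu,n)|M_f|$, and then counts $M'$ the other way by first choosing one of the $PM(n)$ perfect matchings and then coloring the endpoints so that the colored directed-edge distribution is $\nu$, identifying the number of such colorings with $H(\nu,n/2)$. You instead compute $|M_f|$ in closed form by decomposing each color class $V_s$ into blocks and then verify the claimed identity at exponential order via Stirling; I checked your closed form and the resulting rate $\tfrac12\ln n-\tfrac12-h(\mu)+\tfrac12 h(\nu)$ against $\tfrac1n\ln\bigl(PM(n)H(\nu,n/2)H(\mu,n)^{-1}\bigr)$, and they agree. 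What your route buys is precision about the one point the paper glosses over: for a fixed matching, the number of endpoint-colorings realizing $\nu$ involves, for each off-diagonal class, a factor $2^{m}/m!$ (choose the $m=n\nu(s,t)$ edges of type $\{s,t\}$, then orient), whereas the literal $H(\nu,n/2)$ (colorings of an $(n/2)$-set by $S\times S$ with distribution $\nu$) contributes $1/\bigl((m/2)!\bigr)^2$; the ratio $2^{m}\binom{m}{m/2}^{-1}\sim\sqrt{\pi m/2}$ is polynomial, hence harmless at the $e^{o(n)}$ accuracy with which the lemma is applied in Theorems \ref{edgevertex} and \ref{staredge}, but it does mean the stated identity is not exact term by term. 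So your reading of the equality as one of exponential orders, and your careful bookkeeping of the directed/undirected/monochromatic factor of two, is exactly the check needed; the paper's double count is shorter and avoids Stirling entirely, at the cost of leaving that same discrepancy implicit in the phrase ``this can be done in $PM(n)H(\nu,n/2)$ different ways.''
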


\begin{proof}
Let $M'=\cup_g M_g$ where $g$ runs through the $S$-colorings of $V$ with color distribution $\mu$. We compute $|M'|$ in two different ways. It is clear that $|M'|=H(\mu,n)|M_f|$. On the other hand we can generate an element in $M'$ by first choosing a perfect matching on $V$ and then putting colors on the endpoints of the edges in a way that the distribution of colored edges is $\nu$. This can be done in $PM(n)H(\nu,n/2)$ different ways. So we obtain that $H(\mu,n)|M_f|=PM(n)H(\nu,n/2)$. The proof is complete.
\end{proof}  

Now we are ready to prove Theorem \ref{edgevertex}.

\begin{proof}
The basic idea is the following. Assume that $S$ is a finite set and $\nu\in R_d$ is a typical process, which belongs to $I_d(S)$. We denote by $\{n_i\}_{i=1}^{\infty}$ the sequence such that $\nu\in [\{\mathbb G_{n_i}\}_{i=1}^\infty]$ holds with probability 1. Let  $\nu_v$ denote the marginal of $\nu$ on a vertex in $T_d$ and let $\nu_e$ denote the marginal of $\nu$ on an edge in $T_d$. Let $\varepsilon>0$. We denote by $G_{n,\varepsilon}$ the set of $S$-colored $d$-regular graphs on the vertex set $V_n$ with the restriction that the distribution of vertex colors is $\varepsilon$-close to $\nu_v$ and the distribution of colored (directed) edges is $\varepsilon$-close to $\nu_e$ in total variation distance.
Since $\nu$ is typical we know that if $n$ is large enough and belongs to the sequence $\{n_i\}_{i=1}^{\infty}$, then  almost every $d$-regular graph on $n$ vertices is in $G_{n,\varepsilon}$. It follows that 
\begin{equation}\label{entpr1}
\limsup_{n\to\infty} \frac{|G_{n,\varepsilon}|}{t_n}\geq 1
\end{equation}
 holds for every $\varepsilon>0$ where $t_n$ is the number of $d$-regular graphs on $n$ vertices.

In the rest of the proof we basically compute the asymptotic behavior of $\log|G_{n,\varepsilon}|$ if $\varepsilon$ is small and $n$ is large enough depending on $\varepsilon$. We start by assigning $d$ half-edges to each element of $V_n$. Let $V_n^*$ denote the set of these half edges. We first color the vertices according to the distribution $\nu_v$. We color $V_n^*$ such that each half edge inherits the color of its incident vertex. Then we match these half-edges 
such that the distribution of the colors of the endpoints of a uniform random edge is $\nu_e$.
To be more precise, in each coloring throughout this proof, we allow an $\varepsilon$ error in the total variation distance of distributions.  

There are $H(\begin{picture}(4,10)
\put(2,3){\circle*{2}}
\end{picture})^{n(1+o(1))}$ ways 
to color $V_n$ with distribution $\nu_v$. 
Here $o(1)$ means a quantity that goes to $0$ if first $n$ goes to infinity and then $\varepsilon$ goes to $0$.
 
Assume that the vertices of $V_n$ have a fix coloring. Let $M$ denote the set of perfect macthings on $V_n^*$ that satisfy the above requirement. By Lemma \ref{entlem} we have that $$|M|= PM(nd)H(\begin{picture}(4,10)
\put(2,0){\line(0,1){6}}
\put(2,0){\circle*{2}}
\put(2,6){\circle*{2}}
\end{picture})^{nd/2(1+o(1))}/H(\begin{picture}(4,10)
\put(2,3){\circle*{2}}
\end{picture})^{nd(1+o(1))}.$$

Finally we have to take into consideration that the order of the half-edges does not matter, hence we 
get every coloring $(d!)^{n}$ times. 

Putting everything together, the number of colored $d$-regular graphs on $V_n$ with the required property is the following:
\[\frac{H(\begin{picture}(4,10)
\put(2,3){\circle*{2}}
\end{picture})^{n(1+o(1))} PM(nd)H(\begin{picture}(4,10)
\put(2,0){\line(0,1){6}}
\put(2,0){\circle*{2}}
\put(2,6){\circle*{2}}
\end{picture})^{nd/2(1+o(1))}}{H(\begin{picture}(4,10)
\put(2,3){\circle*{2}}
\end{picture})^{nd(1+o(1))}(d!)^n}.\] 

Using the same argument about the half-edges but forgetting about all colorings, one can see that the number of 
$d$-regular graphs on $n$ vertices is 
\[\frac{PM(nd)}{(d!)^n}.\]

By (\ref{entpr1}) we conclude that
\[\limsup_{n\to\infty} \frac{H(\begin{picture}(4,10)
\put(2,3){\circle*{2}}
\end{picture})^{n(1+o(1))}H(\begin{picture}(4,10)
\put(2,0){\line(0,1){6}}
\put(2,0){\circle*{2}}
\put(2,6){\circle*{2}}
\end{picture})^{nd/2(1+o(1))}}{H(\begin{picture}(4,10)
\put(2,3){\circle*{2}}
\end{picture})^{nd(1+o(1))}}\geq 1;\]
\[ H(\begin{picture}(4,10)
\put(2,0){\line(0,1){6}}
\put(2,0){\circle*{2}}
\put(2,6){\circle*{2}}
\end{picture})^{d/2(1+o(1))}\geq H(\begin{picture}(4,10)
\put(2,3){\circle*{2}}
\end{picture})^{(d-1)(1+o(1))}.\]
 
By tending to $0$ with $\varepsilon$, taking the logarithm of both sides and rearranging we get the statement of the theorem. \hfill $\square$
\end{proof}

Similarly to the proof of Theorem \ref{edgevertex}, one can show the following.

\begin{theorem}\label{staredge} For any typical process $\nu\in R_d$ the following holds:
\[h(\begin{picture}(10,12)
\put(5,3){\circle*{2}}
\put(5,8){\circle*{2}}
\put(5,-2){\circle*{2}}
\put(0,3){\circle*{2}}
\put(10,3){\circle*{2}}
\put(5,3){\line(0,1){5}}
\put(5,-2){\line(0,1){5}}
\put(5,3){\line(1,0){5}}
\put(0,3){\line(1,0){4}}
\end{picture}_d)\geq \frac{d}{2} h(\begin{picture}(4,10)
\put(2,0){\line(0,1){6}}
\put(2,0){\circle*{2}}
\put(2,6){\circle*{2}}
\end{picture}),\]
where \  \begin{picture}(10,12)
\put(5,3){\circle*{2}}
\put(5,8){\circle*{2}}
\put(5,-2){\circle*{2}}
\put(0,3){\circle*{2}}
\put(10,3){\circle*{2}}
\put(5,3){\line(0,1){5}}
\put(5,-2){\line(0,1){5}}
\put(5,3){\line(1,0){5}}
\put(0,3){\line(1,0){4}}$\ \ \ \ _d$
\end{picture} \ \ is the star of degree $d$.
\end{theorem}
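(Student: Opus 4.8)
The plan is to mimic the counting argument used in the proof of Theorem \ref{edgevertex}, but now comparing the configuration entropy of a degree-$d$ star against that of an edge, rather than an edge against a vertex. The key idea is that every $d$-regular graph on $n$ vertices can be assembled from its local pieces: if I fix an $S$-coloring of the vertices and then choose a perfect matching on the $nd$ half-edges realizing the prescribed edge-color distribution $\nu_e$, the resulting colored graph automatically carries a star-coloring distribution that is $\varepsilon$-close to the star-marginal $\nu_\star$ of $\nu$. Throughout, I would allow an $\varepsilon$-error in total variation in every distribution, and let $o(1)$ denote a quantity tending to $0$ as $n\to\infty$ and then $\varepsilon\to 0$, exactly as before.

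First I would set up the two-way count. Since $\nu\in R_d$ is typical, inequality \eqref{entpr1} gives $\limsup_{n\to\infty}|G_{n,\varepsilon}|/t_n\geq 1$, where now $G_{n,\varepsilon}$ additionally requires the star-color distribution to be $\varepsilon$-close to $\nu_\star$. The number of $d$-regular graphs is again $PM(nd)/(d!)^n$. For the numerator, I would count colored graphs in $G_{n,\varepsilon}$ by organizing the count around the stars: each vertex together with its $d$ incident half-edges forms a star, and the color configuration on the $n$ stars has distribution $\nu_\star$, giving $H(\begin{picture}(10,12)\put(5,3){\circle*{2}}\put(5,8){\circle*{2}}\put(5,-2){\circle*{2}}\put(0,3){\circle*{2}}\put(10,3){\circle*{2}}\put(5,3){\line(0,1){5}}\put(5,-2){\line(0,1){5}}\put(5,3){\line(1,0){5}}\put(0,3){\line(1,0){4}}\end{picture}_d)^{n(1+o(1))}$ ways (here each star-coloring records the vertex color and the $d$ half-edge colors). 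Then I would match the half-edges so that the edge-color distribution is $\nu_e$; by the same application of Lemma \ref{entlem} this contributes a factor involving $PM(nd)$, $H(\begin{picture}(4,10)\put(2,0){\line(0,1){6}}\put(2,0){\circle*{2}}\put(2,6){\circle*{2}}\end{picture})^{nd/2(1+o(1))}$, and a correction by the half-edge color distribution. Dividing by $(d!)^n$ for the ordering of half-edges and comparing with $t_n$ should, after the dust settles, yield the inequality $H(\begin{picture}(10,12)\put(5,3){\circle*{2}}\put(5,8){\circle*{2}}\put(5,-2){\circle*{2}}\put(0,3){\circle*{2}}\put(10,3){\circle*{2}}\put(5,3){\line(0,1){5}}\put(5,-2){\line(0,1){5}}\put(5,3){\line(1,0){5}}\put(0,3){\line(1,0){4}}\end{picture}_d)\geq H(\begin{picture}(4,10)\put(2,0){\line(0,1){6}}\put(2,0){\circle*{2}}\put(2,6){\circle*{2}}\end{picture})^{d/2}$, which on taking logarithms is exactly the claim.

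The main subtlety, and the step I expect to require the most care, is bookkeeping the combinatorial factors correctly so that the exponents of $H(\begin{picture}(4,10)\put(2,3){\circle*{2}}\end{picture})$ (the vertex entropy) cancel cleanly. In the star count the vertex color is already recorded inside the star configuration, so I must avoid double-counting it when I separately apply Lemma \ref{entlem} to the matching; the correct normalization is to view the star distribution as encoding both the vertex color and the multiset of half-edge colors emanating from it, and to let the matching step only glue half-edges whose marginal color distribution is consistent. I would phrase Lemma \ref{entlem} relative to the half-edge population $V_n^\star$ of size $nd$ with its induced color distribution, matching it into edges of distribution $\nu_e$; the half-edge color distribution is itself determined by $\nu_\star$ (it is the expected fraction of half-edges of each color), so all the $H(\begin{picture}(4,10)\put(2,3){\circle*{2}}\end{picture})$-type factors from the edge count and the star count are forced to agree and cancel, leaving precisely the star-versus-edge comparison. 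Once the exponents are checked to balance, tending $\varepsilon\to 0$ and taking logarithms finishes the argument in the same manner as Theorem \ref{edgevertex}.
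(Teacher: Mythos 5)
Your proposal follows essentially the same route as the paper's proof: count the colored $d$-regular graphs by first choosing the $n$ colored stars (contributing $H(\mathrm{star})^{n(1+o(1))}$) and then matching the $nd$ half-edges via Lemma \ref{entlem}, and compare against $t_n=PM(nd)/(d!)^n$ using \eqref{entpr1}. The one bookkeeping point you leave vague is resolved in the paper by coloring each half-edge with the \emph{pair} (own vertex color, intended neighbor color), so that the matching constraint is purely a condition on half-edge colors and Lemma \ref{entlem} applies with $\mu$ equal to the directed-edge marginal; the resulting matching factor is $PM(nd)\,H(\mathrm{edge})^{nd/2(1+o(1))}\,H(\mathrm{edge})^{-nd(1+o(1))}$, i.e.\ the correction is an edge-entropy factor and no vertex-entropy factors appear or need to cancel against the star count --- with that fixed, your argument is exactly the paper's.
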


\begin{proof} The proof is very similar to the proof of Theorem \ref{edgevertex} so we only give the details that are different. Let $\nu\in R_d\cap I_d(S)$. Let $C$ denote the star of degree $d$. We label the root of $C$ by $0$ and the endpoints of the rays by $\{1,2,\dots,d\}$. Let \ $\begin{picture}(10,12)
\put(5,3){\circle*{2}}
\put(5,8){\circle*{2}}
\put(5,-2){\circle*{2}}
\put(0,3){\circle*{2}}
\put(10,3){\circle*{2}}
\put(5,3){\line(0,1){5}}
\put(5,-2){\line(0,1){5}}
\put(5,3){\line(1,0){5}}
\put(0,3){\line(1,0){4}}
\end{picture}_d$\ and\ $\begin{picture}(4,10)
\put(2,0){\line(0,1){6}}
\put(2,0){\circle*{2}}
\put(2,6){\circle*{2}}
\end{picture}$\ denote the marginal distributions of $\nu$ on the degree $d$ star and on an edge in $T_d$.  Again we count $S$-colored $d$-regular graphs on $n$ vertices with the restriction that the distribution on random stars and edges are close to\ $\begin{picture}(10,12)
\put(5,3){\circle*{2}}
\put(5,8){\circle*{2}}
\put(5,-2){\circle*{2}}
\put(0,3){\circle*{2}}
\put(10,3){\circle*{2}}
\put(5,3){\line(0,1){5}}
\put(5,-2){\line(0,1){5}}
\put(5,3){\line(1,0){5}}
\put(0,3){\line(1,0){4}}
\end{picture}_d$ and\ $\begin{picture}(4,10)
\put(2,0){\line(0,1){6}}
\put(2,0){\circle*{2}}
\put(2,6){\circle*{2}}
\end{picture}$.  Let $V_n$ be a set of $n$ elements. To each element $v_i\in V_n$ we assign $d$ half-edges $\{v_{i,j}\}_{j=1}^d$. We denote by $V_n^*$ the set of half-edges. Let $f:V_n^*\rightarrow S\times S$ be a coloring of the half-edges with pairs of elements from $S$ such that the first coordinates of $f(v_{i,j})$ and $f(v_{i,k})$ are the same, say $g(i)\in S$, for every triple $1\leq i\leq n$ and $1\leq j,k\leq d$. 
To each number $1\leq i\leq n$ we can assign an $S$-colored version of the star $C$ such that the color of the root $0$ is $s_i$ and the color of $j\in V(C)$ is the second coordinate of $f(v_{i,j})$ for $1\leq j\leq d$.  
We say that $f$ is "good" if the distribution of these colored stars is \   $\begin{picture}(10,12)
\put(5,3){\circle*{2}}
\put(5,8){\circle*{2}}
\put(5,-2){\circle*{2}}
\put(0,3){\circle*{2}}
\put(10,3){\circle*{2}}
\put(5,3){\line(0,1){5}}
\put(5,-2){\line(0,1){5}}
\put(5,3){\line(1,0){5}}
\put(0,3){\line(1,0){4}}
\end{picture}_d$ if $1\leq i\leq n$ is random. The number of good colorings is  $H(\begin{picture}(10,12)
\put(5,3){\circle*{2}}
\put(5,8){\circle*{2}}
\put(5,-2){\circle*{2}}
\put(0,3){\circle*{2}}
\put(10,3){\circle*{2}}
\put(5,3){\line(0,1){5}}
\put(5,-2){\line(0,1){5}}
\put(5,3){\line(1,0){5}}
\put(0,3){\line(1,0){4}}
\end{picture}_d)^{n(1+o(1))}$. 
We obtain a $d$-regular graph $G$ with a desired coloring $g$ by using a perfect matching on the set of half-edges such that the second coordinate of each half-edge is equal to the first coordinate of its pair in the mathching. 
Using Lemma \ref{entlem}, we obtain that the number of such perfect matchings is $$PM(nd)H(\begin{picture}(4,10)
\put(2,0){\line(0,1){6}}
\put(2,0){\circle*{2}}
\put(2,6){\circle*{2}}
\end{picture})^{(dn/2)(1+o(1))}H(\begin{picture}(4,10)
\put(2,0){\line(0,1){6}}
\put(2,0){\circle*{2}}
\put(2,6){\circle*{2}}
\end{picture})^{-dn(1+o(n))}.$$ Thus the number of $d$-regular graphs with a desired coloring is
$$\frac{PM(nd)H(\begin{picture}(10,12)
\put(5,3){\circle*{2}}
\put(5,8){\circle*{2}}
\put(5,-2){\circle*{2}}
\put(0,3){\circle*{2}}
\put(10,3){\circle*{2}}
\put(5,3){\line(0,1){5}}
\put(5,-2){\line(0,1){5}}
\put(5,3){\line(1,0){5}}
\put(0,3){\line(1,0){4}}
\end{picture}_d)^{n(1+o(1))}}{H(\begin{picture}(4,10)
\put(2,0){\line(0,1){6}}
\put(2,0){\circle*{2}}
\put(2,6){\circle*{2}}
\end{picture})^{(dn/2)(1+o(1))}d!^n}.$$ Similarly to the proof of Theorem \ref{edgevertex} we obtain that $H(\begin{picture}(10,12)
\put(5,3){\circle*{2}}
\put(5,8){\circle*{2}}
\put(5,-2){\circle*{2}}
\put(0,3){\circle*{2}}
\put(10,3){\circle*{2}}
\put(5,3){\line(0,1){5}}
\put(5,-2){\line(0,1){5}}
\put(5,3){\line(1,0){5}}
\put(0,3){\line(1,0){4}}
\end{picture}_d)\geq H(\begin{picture}(4,10)
\put(2,0){\line(0,1){6}}
\put(2,0){\circle*{2}}
\put(2,6){\circle*{2}}
\end{picture})^{d/2}$. This completes the proof. 

\end{proof}

\subsection{Entropy inequalities and branching Markov chains}

In Theorem \ref{thm1} we gave a sufficient condition for a branching Markov process to be  factor of i.i.d. process. This can not be necessary, as the example of the Ising model shows. The  
Ising model with parameter $\vartheta$ is the particular case where the Markov chain has only two states and the transition 
matrix $Q=\left(
\begin{tabular}{cc} 
$\frac{1+\vartheta}{2}$ & $\frac{1-\vartheta}{2}$\\ $\frac{1-\vartheta}{2}$ & $\frac{1+\vartheta}{2}$
\end{tabular}
\right)$ is symmetric. 
That is, when we propagate the states from the root along the tree, $\frac{1+\vartheta}{2}$ is the probability that we keep 
the current state. The model is called ferromagnetic if  $\vartheta\geq 0$; i.e. if it is more likely to keep the current state than to change it. The Dobrushin coefficient of the Ising model with parameter $\vartheta\geq 0$ is just $\vartheta$. 
Therefore our theorem implies that when $-1/d<|\vartheta|<1/d$, then the ferromagnetic Ising model is a factor of i.i.d. 
process. But a stronger statement is known: the Ising model is a factor of i.i.d. if $-1/(d-1)\leq\vartheta\leq 1/(d-1)$. To prove this, 
one can use that the clusters in the random cluster representation of the Ising model are almost surely finite in this 
regime. See e.g.
Section 3 of \cite{russ} for the details. See also the paper of H\"aggstr\"om, Jonasson and Lyons \cite{russregi} for a generalization of this result to random-cluster and Potts models.

It is also known that the Ising model with parameter $|\vartheta|>1/\sqrt{d-1}$ can not be factor of i.i.d. (not even a weak limit of factor of i.i.d processes)  see \cite{russ} and \cite{cordec}. 
It is an open question whether the Ising model with $1/(d-1)< |\vartheta|\leq 1/\sqrt{d-1}$ is factor of i.i.d. or not (or whether it is limit of 
factor of i.i.d).

For the ferromagnetic Ising model, the parameter $\vartheta$ is equal to the spectral radius of the transition matrix $Q$, which is, in general, the second largest eigenvalue in absolute value after the eigenvalue $1$. 
More generally, the results of \cite{cordec} imply that a branching Markov process is not the weak limit of factor of i.i.d. 
processes if the spectral radius $\varrho$ of its transition matrix $Q$ is larger than $1/\sqrt{d-1}$. We will 
use Theorem \ref{edgevertex} to show that for general branching Markov processes the correlation bound is far from being optimal. 

\begin{theorem}\label{exnotsuff} For every $d\geq 3$ and $\varepsilon>0$ there exists a transition matrix $Q$ such that 
\begin{itemize}
\item its spectral radius is less than $\varepsilon$;
\item the branching Markov process on the $d$-regular tree $T_d$ according to $Q$ is  not a typical process, and hence it is not the weak limit of factor of i.i.d. processes.
\end{itemize}
\end{theorem}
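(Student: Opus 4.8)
The plan is to exhibit a reversible chain whose branching process violates the necessary condition of Theorem \ref{edgevertex} while having arbitrarily small spectral radius. First I would translate Theorem \ref{edgevertex} into a condition on $Q$. For a reversible chain with stationary distribution $\pi$, the branching Markov process $\nu_Q$ has vertex marginal $\pi$ and edge marginal $\nu_e(i,j)=\pi(i)Q(i,j)$ on the two endpoints. Hence the vertex entropy is $h_v=-\sum_i\pi(i)\ln\pi(i)$ and the edge entropy splits as
\[
h_e=-\sum_{i,j}\pi(i)Q(i,j)\ln\bigl(\pi(i)Q(i,j)\bigr)=h_v+H(Q\mid\pi),
\]
where $H(Q\mid\pi)=-\sum_{i,j}\pi(i)Q(i,j)\ln Q(i,j)$ is the one-step conditional entropy. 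Substituting $h_e=h_v+H(Q\mid\pi)$ into $\tfrac d2 h_e\ge(d-1)h_v$ and rearranging, every typical branching Markov process must satisfy
\[
d\,H(Q\mid\pi)\ \ge\ (d-2)\,h_v .
\]
Because $d\ge 3$ the coefficient $d-2$ is strictly positive, so it suffices to build a reversible $Q$ with $H(Q\mid\pi)$ far below $h_v$ and with small spectral radius.

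Next I would realise such a $Q$ as the simple random walk on a good expander. Let $X$ be an $m$-regular graph on $K$ vertices whose walk $Q=A/m$ (with $A$ the adjacency matrix) has spectral radius at most $2\sqrt{m-1}/m$; such graphs exist for infinitely many $K$ once $m$ is fixed, for instance non-bipartite Ramanujan graphs, or typical random $m$-regular graphs by Friedman's theorem \cite{friedman}. This $Q$ is symmetric, hence reversible with respect to the uniform distribution $\pi$, so $\nu_Q$ is a well-defined invariant branching Markov process in $I_d(S)$ with $|S|=K$. For this chain $h_v=\ln K$, while each state has exactly $m$ equally likely successors, giving $H(Q\mid\pi)=\ln m$, and the spectral radius is at most $2/\sqrt m$.

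Finally I would fix the parameters in the right order. First choose $m$ so large that $2/\sqrt m<\varepsilon$, which forces the spectral radius below $\varepsilon$. Then let $K\to\infty$ through values admitting such an expander, so that $\ln m<\tfrac{d-2}{d}\ln K$, i.e.\ $d\ln m<(d-2)\ln K$. For such $K$ one has $d\,H(Q\mid\pi)=d\ln m<(d-2)\ln K=(d-2)h_v$, violating the displayed necessary condition; hence $\nu_Q\notin R_d$, so $\nu_Q$ is not typical. Since $L_d\subseteq R_d$ by Lemma \ref{lem:bovul} and $L_d$ is exactly the class of weak limits of factor of i.i.d.\ processes, it follows that $\nu_Q$ is not even a weak limit of factor of i.i.d.\ processes, as claimed.

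The main obstacle is reconciling the two competing demands. A small spectral radius pushes $Q$ toward the rank-one matrix whose rows all equal $\pi$, for which $H(Q\mid\pi)=h_v$ and the inequality holds with room to spare; whereas violating the inequality needs $H(Q\mid\pi)$ small, i.e.\ nearly deterministic transitions, which tends to inflate the spectral radius (a permutation matrix achieves $H(Q\mid\pi)=0$ but has spectral radius $1$). The expander construction resolves this because spectral radius and transition entropy are governed by different parameters: fixing the degree $m$ pins the spectral gap, while sending the number of states $K$ to infinity drives $h_v=\ln K$ above the threshold $\tfrac{d}{d-2}\ln m$. The only external input is the existence of bounded-degree graphs whose walk has spectral radius $O(1/\sqrt m)$ and which are non-bipartite (to avoid an eigenvalue near $-1$), which I would import as a black box.
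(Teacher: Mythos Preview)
Your proposal is correct and follows essentially the same route as the paper: take $Q$ to be the simple random walk on a Ramanujan-type $m$-regular graph on $K$ vertices, so that the vertex entropy is $\ln K$, the edge entropy is $\ln K+\ln m$, and the spectral radius is $O(1/\sqrt m)$; then choose $m$ large enough to force the spectral radius below $\varepsilon$ and $K$ large enough to violate Theorem~\ref{edgevertex}. The paper uses the Lubotzky--Phillips--Sarnak graphs with $m=p+1$ explicitly, but the argument is the same, and your explicit remark that the graph must be non-bipartite (to keep the eigenvalue $-1$ out of play) is a point the paper leaves implicit.
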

\begin{proof}
Choose a prime $p$ which is equal to 1 modulo 4 and which satisfies $\frac{2\sqrt{p}}{p+1}<\varepsilon$. 
Let $G$ be a $(p+1)$-regular Ramanujan graph (see the definition below) on $k$ vertices such that 
\[k>(p+1)^{\frac{d}{d-2}}.\] Due to Lubotzky, Phillips and Sarnak \cite{lbs}, this is possible. Let $Q$ be the
transition matrix of the simple random walk on the vertices of $G$. (That is, $Q$ is the adjacency matrix of $G$ normalized 
by $p+1$.) Let $r$ be the spectral radius of $G$. 
By the definition of Ramanujan graphs we 
have that $r\leq \frac{2\sqrt{p}}{p+1}< \varepsilon$.

The branching Markov process on $T_d$ according to $Q$ is an invariant process in $I_d(N)$, where $N$ represents the 
vertices of $G$, that is, it has $k$ elements. Since $G$ is regular, the stationary random walk is uniformly distributed 
on its 
vertices, and therefore the vertex entropy of this branching Markov process is just $\ln k$. 

As for the edge entropy: we can choose the first vertex uniformly at random, and then one of its 
$p+1$ neighbors arbitrarily, but the order does not matter. Therefore the edge entropy is $\ln k+\ln (p+1)$. 

From Proposition \ref{edgevertex} we get that if the branching Markov process according to the transition matrix $Q$ was a typical process, 
then the following would be true: 
\[\frac{d}{2}h(\begin{picture}(4,10)
\put(2,0){\line(0,1){6}}
\put(2,0){\circle*{2}}
\put(2,6){\circle*{2}}
\end{picture})\geq (d-1)h(\begin{picture}(4,10)
\put(2,3){\circle*{2}}
\end{picture});\]
\[\frac{d}{2}[\ln k+\ln(p+1)]\geq (d-1)\ln k;\]
\[d\ln(p+1)\geq (d-2)\ln k;\]
\[(p+1)^{d/(d-2)}\geq k.\]

This contradicts the choice of $k$. Therefore the branching Markov process according to $Q$ is not a typical process. 
\end{proof}

\begin{remark} The example of the Potts model shows that the typicallity of a process or the fact whether it is factor of i.i.d. can not be decided based only on the 
number of states and the spectral radius. 
Let $Q_1$ be the 
transition matrix of the Potts model on $k$ states (see e.g. \cite{sly}): with a 
given probability $p$ it stays at the actual state, otherwise it chooses another state uniformly at random. Its 
spectral radius is equal to $1-\frac{pk}{k-1}$. Moreover, it is also known that the Potts model satisfies the Dobrushin condition if $k>2d$ \cite{sokal}.  
By choosing $p$ such that the spectral radius is so small that the previous theorem can be applied,  we get that the branching Markov chain in the previous theorem is not limit of factor of i.i.d., while  Theorem \ref{thm1} implies that the branching Markov process according to $Q_1$ 
is  a factor of i.i.d. process.

\end{remark}

\begin{remark}
We have seen that the entropy inequality can lead to stronger bound than the correlation decay when the number of states is sufficiently large. However, for the Ising model, when $k=2$, the correlation decay bound is stronger than the bound we get from this entropy inequality. 
\end{remark}

\medskip

\subsection{Entropy inequalities and random $d$-regular graphs}

\label{dominating}

In this section we show how to use entropy inequalities to obtain results about random $d$-regular graphs. Our strategy is that we use Theorem \ref{staredge} to show that certain invariant processes can not be typical. Then, by the correspondence principle, we translate this to statements about random $d$-regular graphs. Throughout this section we assume that $d\geq 3$.

We denote by $C$  the degree $d$ star in $T_d$ with root $o$ and leaves $w_1,w_2,\dots,w_d$.
Let $\mu\in I_d(M)$ be an invariant process. If $F$ is a finite subset of $V(T_d)$, then we denote by $\mu_F$ the marginal distribution of $\mu$ restricted to $F$, and by $\nu_F$ the product measure of the marginals of $\mu_F$.
We denote by $t(F)$ the total correlation of the joint distribution of $\mu_F$; that is,  $t(F)=h(\nu_F)-h(F)$.

\begin{proposition} \label{prop:41}

Let $\mu$ be a typical process and suppose that $h(C)-h(C\setminus \{ w_1\})\leq b$ for some $b\geq 0$. 
Then $t(C\setminus \{ w_1\})\leq b\frac{2d-2}{d-2}$ and \[d_{TV}(\mu_{C\setminus \{ w_1\}}, \nu_{C\setminus \{ w_1\}})\leq \sqrt{b(d-1)/(d-2)}. \]
\end{proposition}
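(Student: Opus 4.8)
The plan is to prove the two assertions in turn, reducing the second to the first by Pinsker's inequality. Throughout I write $h_{\mathrm v}$, $h_{\mathrm e}$ and $h_{\mathrm s}$ for the vertex, edge and $d$-star entropies of $\mu$ (i.e.\ $h_{\mathrm s}=h(C)$), and I abbreviate $c'=h(C\setminus\{w_1\})$, so that the hypothesis reads $a:=h_{\mathrm s}-c'\le b$. The first observation is that the total correlation is exactly a relative entropy: since $\nu_{C\setminus\{w_1\}}$ is the product of the one-vertex marginals, which are all equal by invariance, one has $h(\nu_{C\setminus\{w_1\}})=d\,h_{\mathrm v}$ and $t(C\setminus\{w_1\})=d\,h_{\mathrm v}-c'=D_{\mathrm{KL}}(\mu_{C\setminus\{w_1\}}\,\|\,\nu_{C\setminus\{w_1\}})$. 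Granting the bound $t(C\setminus\{w_1\})\le b\,\frac{2d-2}{d-2}$, Pinsker's inequality $d_{TV}^2\le \tfrac12 D_{\mathrm{KL}}$ (natural logarithms, $d_{TV}\in[0,1]$) immediately gives $d_{TV}(\mu_{C\setminus\{w_1\}},\nu_{C\setminus\{w_1\}})\le\sqrt{\tfrac12\cdot b\,\frac{2d-2}{d-2}}=\sqrt{b(d-1)/(d-2)}$, which is the second assertion. So the whole proposition rests on the total-correlation bound.

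For that bound I would first rewrite $t(C\setminus\{w_1\})=d\,h_{\mathrm v}-c'=d\,h_{\mathrm v}-h_{\mathrm s}+a$, so it suffices to prove $t(C\setminus\{w_1\})\le \frac{2(d-1)}{d-2}a$ (the hypothesis $a\le b$ then finishes), which is equivalent to $d\,h_{\mathrm v}-h_{\mathrm s}\le \frac{d}{d-2}\,a$. The two ingredients are subadditivity of entropy applied to the leaves conditioned on the root, and the star--edge inequality of Theorem \ref{staredge}. For the former, decompose $c'=h(o)+h(w_2,\dots,w_d\mid o)$ and bound the conditional joint entropy by the sum of the individual conditional entropies $h(w_i\mid o)=h_{\mathrm e}-h_{\mathrm v}$; this yields $c'\le (d-1)h_{\mathrm e}-(d-2)h_{\mathrm v}$, and hence the hypothesis furnishes the \emph{lower} bound $a=h_{\mathrm s}-c'\ge h_{\mathrm s}-(d-1)h_{\mathrm e}+(d-2)h_{\mathrm v}=:L$.

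The remaining step is purely algebraic. Since $\frac{d}{d-2}>0$ and $a\ge L$, it is enough to check $d\,h_{\mathrm v}-h_{\mathrm s}\le \frac{d}{d-2}L$. Expanding $L$ and cancelling the common term $d\,h_{\mathrm v}$, this reduces to $\frac{d(d-1)}{d-2}h_{\mathrm e}\le \frac{2(d-1)}{d-2}h_{\mathrm s}$, that is, to $h_{\mathrm s}\ge\frac d2 h_{\mathrm e}$, which is precisely Theorem \ref{staredge}. Combining, $d\,h_{\mathrm v}-h_{\mathrm s}\le\frac{d}{d-2}L\le\frac{d}{d-2}a$, giving the desired total-correlation bound.

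I expect the main obstacle to be conceptual rather than computational: finding the right way to feed in the hypothesis $h(C)-h(C\setminus\{w_1\})\le b$. The naive route is to bound $t(C\setminus\{w_1\})$ using $h_{\mathrm s}\ge(d-1)h_{\mathrm v}$, which quickly produces $t(C\setminus\{w_1\})\le h_{\mathrm v}+b$; but $h_{\mathrm v}$ is not controlled by $b$, so this is hopeless, as one sees already from the degenerate "all vertices equal" process (which is excluded precisely because it violates Theorem \ref{staredge}). The key realization is that the hypothesis must be used as a lower bound on $a$ through subadditivity, after which the star--edge inequality exactly absorbs the uncontrolled $h_{\mathrm v}$; I would also note that Theorem \ref{edgevertex} is not needed here. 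The only delicate point otherwise is getting the normalisation in Pinsker's inequality right, since it is the constant $\tfrac12$ there that produces the stated square-root constant $\sqrt{(d-1)/(d-2)}$.
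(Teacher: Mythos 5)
Your proof is correct and follows essentially the same route as the paper: both arguments combine the star--edge inequality of Theorem \ref{staredge}, the subadditivity bound $h(C\setminus\{w_1\})\le h(o)+(d-1)[h(\{o,w_1\})-h(o)]$, and the hypothesis $h(C)-h(C\setminus\{w_1\})\le b$ in the same linear combination to get $t(C\setminus\{w_1\})\le \frac{2d-2}{d-2}b$, and then apply Pinsker's inequality exactly as the paper does. The only difference is cosmetic bookkeeping (you lower-bound $a$ via subadditivity and then invoke the star--edge inequality to close the algebra, while the paper chains the same inequalities in the opposite order), so nothing further is needed.
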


\begin{proof}
By Theorem \ref{staredge} and the condition of the proposition we get 
\begin{equation}\label{eq:p41}0\leq h(C)-\frac d2 h(\{o, w_1\})\leq h(C\setminus \{w_1\})-\frac d2 h(\{o, w_1\})+b.\end{equation}
By using a simple upper bound on the  entropy of $C\setminus \{w_1\}$ we get 
\begin{equation*}0\leq h(o)+(d-1)[h(\{o,w_1\})-h(o)]- \frac d2 h(\{o, w_1\})+b.
\end{equation*}
By rearranging and multiplying by $d/(d-2)$, this implies 
\[-\frac d2h(\{o, w_1\})\leq \frac{db}{d-2}-dh(o).\]
Putting this together with inequality \eqref{eq:p41}, we conclude 
\[0\leq h(C\setminus \{w_1\})-dh(o)+\frac{2d-2}{d-2}b.\] 
Since $h(\nu_F)=dh(o)$ for an invariant process if $F$ consists of $d$ vertices, this concludes the proof of the first inequality.

Observe that $t(C\setminus \{ w_1\})=D\big(\mu_{C\setminus \{ w_1\}}||\nu_{C\setminus \{ w_1\}}\big)$, where $D$ denotes the relative entropy. 
Recall that Pinsker's inequality says that $D(P||Q)\geq 2d_{TV}(P, Q)^2$, where $P$ and $Q$ are two probability distributions on the same set. This implies the statement. \hfill $\square$
\end{proof}

\medskip

As a first application of Proposition \ref{prop:41}, we use it in the case of $b=0$. 

\begin{definition} \label{def:rigid}
Let $S$ be a finite set and $\mu\in I_d(S)$ be an invariant process. Assume that $C$ is a degree $d$ star in $T_d$ with root $o$ and leaves $w_1,w_2,\dots,w_d$. We say that $\mu$ is {\it rigid} if 
\begin{enumerate} 
\item   the values on $C\setminus\{w_1\}$ uniquely determine the value on $w_1$;
\item $\mu$ restricted to $C\setminus\{w_1\}$ is not i.i.d. at the vertices. 
\end{enumerate}
\end{definition}
%

%
%

\begin{proposition}\label{apstaredge} If $\mu\in I_d(S)$ is a rigid process, then it is not typical.
\end{proposition}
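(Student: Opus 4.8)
The plan is to obtain a contradiction by applying Proposition \ref{prop:41} in the degenerate case $b=0$. First I would translate rigidity condition (1) into an entropy identity: saying that the values on $C\setminus\{w_1\}$ uniquely determine the value on $w_1$ means that the conditional entropy of the color of $w_1$ given the colors on $C\setminus\{w_1\}$ is zero. By the chain rule for configuration entropy this gives $h(C)=h(C\setminus\{w_1\})$, so the hypothesis $h(C)-h(C\setminus\{w_1\})\le b$ of Proposition \ref{prop:41} holds with $b=0$. Consequently, \emph{if $\mu$ were typical}, the proposition would yield $t(C\setminus\{w_1\})\le 0$.

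Next I would combine this with the fact that total correlation is always nonnegative. Recall that in the proof of Proposition \ref{prop:41} it is observed that $t(C\setminus\{w_1\})=D\big(\mu_{C\setminus\{w_1\}}\,\|\,\nu_{C\setminus\{w_1\}}\big)$, a relative entropy, which is $\ge 0$ and vanishes exactly when the two measures coincide. The two bounds together force $t(C\setminus\{w_1\})=0$, hence $\mu_{C\setminus\{w_1\}}=\nu_{C\setminus\{w_1\}}$; that is, the colors at the $d$ vertices of $C\setminus\{w_1\}$ are mutually independent under $\mu$. To upgrade independence to ``i.i.d.'' I would use invariance: since $\mathrm{Aut}(T_d)$ acts transitively on $V(T_d)$, all single-vertex marginals of $\mu$ are equal, so independent plus identically distributed gives an i.i.d.\ restriction. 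This contradicts rigidity condition (2), which asserts precisely that $\mu$ restricted to $C\setminus\{w_1\}$ is not i.i.d. Therefore $\mu$ cannot be typical.

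I do not expect a serious obstacle here, since the argument is essentially a boundary-case reading of Proposition \ref{prop:41}. The one point that requires care — and which I would regard as the crux — is justifying that the limiting value $b=0$ is legitimately admissible and that it pins the total correlation to \emph{exactly} zero (rather than merely small): this is what lets the nonnegativity of $t$ close the gap and convert the quantitative bound into the qualitative statement $\mu_{C\setminus\{w_1\}}=\nu_{C\setminus\{w_1\}}$. The final invariance step, turning independence into the i.i.d.\ property needed to contradict (2), is standard but should be stated explicitly.
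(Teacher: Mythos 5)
Your proposal is correct and follows essentially the same route as the paper: the paper's proof likewise notes that rigidity condition (1) lets one apply Proposition \ref{prop:41} with $b=0$, forcing $\mu_{C\setminus\{w_1\}}=\nu_{C\setminus\{w_1\}}$ and contradicting condition (2). Your extra remarks — that the conclusion passes through the vanishing of the relative entropy, and that invariance upgrades independence to i.i.d.\ — are just explicit spellings-out of steps the paper leaves implicit.
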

\begin{proof} The first assumption in Definition \ref{def:rigid} implies that Proposition \ref{prop:41} holds for $\mu$ with $b=0$, and thus we obtain that $\mu_{C\setminus\{w_1\}}=\nu_{C\setminus\{w_1\}}$, which contradicts the second assumption. \hfill $\square$  
\end{proof}

\medskip

We give an example for families of rigid processes. 

\begin{lemma}\label{rig1} Assume that $S$ is a finite set in $\mathbb{R}$ and that $\mu$ satisfies the eigenvector equation; namely, that a $\mu$-random function $f:T_d\rightarrow S$ satisfies that $\lambda f(o)=f(w_1)+f(w_2)+\dots+f(w_d)$ holds with probability $1$. Then $\mu$ is rigid.
\end{lemma}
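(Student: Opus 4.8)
The plan is to verify the two requirements of Definition \ref{def:rigid} separately. The first is immediate from the hypothesis, while the real content sits in the second, where a second-moment computation is the decisive tool.

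First I would dispatch condition (1). The eigenvector equation $\lambda f(o)=f(w_1)+\dots+f(w_d)$ holds $\mu$-almost surely, so solving for $f(w_1)$ gives
\[
f(w_1)=\lambda f(o)-\sum_{i=2}^d f(w_i)
\]
with probability $1$. Since $o,w_2,\dots,w_d$ are exactly the vertices of $C\setminus\{w_1\}$, this exhibits $f(w_1)$ as an almost surely deterministic (measurable) function of the values on $C\setminus\{w_1\}$, which is precisely condition (1).

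For condition (2) I would argue by contradiction: suppose $\mu$ restricted to $C\setminus\{w_1\}$ is i.i.d. at the vertices. Invariance of $\mu$ under $\mathrm{Aut}(T_d)$ forces every single-vertex marginal to coincide, so ``i.i.d.'' means that $f(o),f(w_2),\dots,f(w_d)$ are mutually independent, each with the common marginal; let $\sigma^2$ denote its variance, which is finite because $S\subset\mathbb{R}$ is finite. The idea is to compute $\mathrm{Var}(f(w_1))$ in two ways. Invariance gives $\mathrm{Var}(f(w_1))=\sigma^2$, while the displayed formula for $f(w_1)$ together with the assumed independence of $f(o),f(w_2),\dots,f(w_d)$ gives
\[
\mathrm{Var}(f(w_1))=\lambda^2\sigma^2+(d-1)\sigma^2=(\lambda^2+d-1)\sigma^2 .
\]
Equating the two expressions yields $(\lambda^2+d-2)\sigma^2=0$. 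Since $d\geq 3$ and $\lambda\in\mathbb{R}$, the factor $\lambda^2+d-2\geq 1$ is strictly positive, so $\sigma^2=0$.

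The only delicate point, and the one I expect to be the main obstacle, is interpreting $\sigma^2=0$: it says the marginal is a point mass, i.e.\ $f$ is almost surely constant, and for such a degenerate $\mu$ (where the equation forces $f\equiv 0$ unless $\lambda=d$) the restriction to $C\setminus\{w_1\}$ is trivially i.i.d. Thus condition (2) holds precisely for non-constant $\mu$, which is the intended setting, since rigidity is invoked to rule out genuine, non-trivial eigenvectors. I would therefore read the lemma under the (natural, non-degenerate) assumption that $\mu$ is not concentrated on constant functions, under which the supposition that $C\setminus\{w_1\}$ is i.i.d.\ is contradictory, so $\mu$ is rigid. As a side remark, the same contradiction can be reached from the off-diagonal identity $\mathrm{Cov}(f(w_1),f(o))=\lambda\sigma^2$, which must vanish by invariance and independence; but the diagonal (variance) identity is cleaner since it works for every $\lambda$, including $\lambda=0$.
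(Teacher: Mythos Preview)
Your proof is correct, and for condition (1) it is identical to the paper's. For condition (2), however, you take a genuinely different route. The paper argues combinatorially: assuming $f(o),f(w_2),\dots,f(w_d)$ are i.i.d.\ and every element of $S$ occurs with positive probability, one sees that for every pair $(c_1,c_2)\in S\times S$ the value $\lambda c_2-(d-1)c_1$ must again lie in $S$; in other words $\lambda S+(1-d)S\subseteq S$ as a Minkowski sum, which is impossible for a finite $S$ with at least two elements (compare diameters). Your argument replaces this range/diameter observation with a second-moment identity, equating $\mathrm{Var}(f(w_1))=\sigma^2$ with $(\lambda^2+d-1)\sigma^2$ and using $d\ge 3$. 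Both arguments are short and both tacitly need the non-degeneracy assumption you flag (that $\mu$ is not a point mass), so neither has an advantage on that front. Your variance approach is perhaps the more portable of the two, since it does not use that $S\subset\mathbb{R}$ is totally ordered, only that variances are defined; the paper's Minkowski-sum argument, on the other hand, makes the obstruction visually explicit and would generalize to relations other than linear ones.
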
 
\begin{proof}
Observe that $f(w_1)=\lambda f(o)-(f(w_2)+f(w_3)+\dots+f(w_d))$, which shows that the first condition is satisfied. We want to exclude the possibility that $f(o), f(w_2), f(w_3), \ldots, f(w_n)$ are identically distributed independent random variables. We can assume that all values in $S$ are taken with positive probability. This means that for every pair $(c_1,c_2)\in S\times S$ we have with positive probability that $f(w_2)=f(w_3)=\dots=f(w_d)=c_1$, $f(o)=c_2$, and thus $f(w_1)=\lambda c_2-(d-1)c_1$. It follows that $\lambda S+(1-d)S\subseteq S$ (using Minkowski sum), which is impossible if $S$ is finite.\hfill $\square$
\end{proof}

\medskip

We give further applications of Proposition \ref{prop:41} in extremal combinatorics.

\begin{definition}\label{def:cover}
Let $G=(V, E)$ be a $d$-regular (not necessarily finite) graph. Let $M: S\times S\rightarrow \mathbb N \cup \{0\}$.  We assume that $\sum_{q\in S} M(s,q)=d$ holds for every $s\in S$. Furthermore, we suppose that  the weighted directed graph with adjacency matrix $M$ is connected. Let $f: V\rightarrow S$ be an arbitrary function. We say that $f$ is a covering at $v\in V$ if 
\[\big |\,\{w\ | \ f(w)=q, w\in N(v)\}\,\big |=M(f(v), q),\] 
where $N(v)$ is the set of neighbors of $v$. 
\end{definition}

\begin{lemma}\label{lem:covrig} Assume that $M: S\times S\rightarrow \mathbb N \cup \{0\}$ is as in the previous definition. Fix $\varepsilon\geq 0$ and $d\geq 3$.  Assume furthermore that  $\mu\in I_d(S)$ is an invariant process such that a $\mu$-random function $f: V(T_d^*)\rightarrow S$ is a covering at the root $o$ with probability $1-\varepsilon$, and the distribution of $f(o)$ is supported on at least two elements. Then the following hold. 
\begin{enumerate}[(a)]
\item $h(C)-h(C\setminus \{ w_1\})\leq \varepsilon \log |S|$. 
\item There exists $\delta=\delta(M, \varepsilon)>0$ such that $\mathbb P(f(o)=s)\geq \delta$ holds for all $s\in S$.
\item By using the notation of Proposition \ref{prop:41}, we have \[d_{TV}(\mu_{C\setminus \{ w_1\}}, \nu_{C\setminus \{ w_1\}})\geq \frac12 (\delta^d-\varepsilon).\] 
\item If $\varepsilon=0$, then $\mu$ is rigid.
\end{enumerate} 
\end{lemma}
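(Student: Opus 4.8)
My plan is to establish the four parts in the order (a), (b), (c), (d): part (a) is a self-contained entropy computation, while (c) and (d) rest on the quantitative lower bound from (b). Throughout write $p_s=\mathbb P(f(o)=s)$, let $A$ denote the covering event at the root (so $\mathbb P(A)\ge 1-\varepsilon$), and abbreviate by $Y$ the restriction of $f$ to $C\setminus\{w_1\}$, i.e. the tuple $(f(o),f(w_2),\dots,f(w_d))$. The one structural fact I will use repeatedly is exchangeability of the neighbours: since $\mu$ is invariant under the root-preserving automorphisms of $T_d^*$, conditionally on $f(o)=s$ the joint law of $(f(w_1),\dots,f(w_d))$ is invariant under permutations of the $d$ neighbours. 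On $A$ the neighbour colours form the multiset $M(s,\cdot)$, so exchangeability yields the basic identity $\mathbb P(f(w_1)=q\mid f(o)=s,A)=M(s,q)/d$.

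For (a), I would use that $h(C)-h(C\setminus\{w_1\})$ is exactly the conditional entropy of the colour at $w_1$ given $Y$. On the event $A$ this conditional entropy vanishes: once $f(o)$ and $f(w_2),\dots,f(w_d)$ are known, the covering constraint forces the single remaining colour $f(w_1)$ to be the unique value completing the multiset $M(f(o),\cdot)$. Hence, conditionally on $Y$, the colour of $w_1$ is determined except on the covering-failure event, whose conditional probability averages to $\mathbb P(A^c)\le\varepsilon$; a Fano-type estimate (entropy zero where the value is forced, at most $\log|S|$ where it is not) then gives the bound $\varepsilon\log|S|$.

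For (b), summing the basic identity over $s$ and using that invariance makes the single-site marginal at $w_1$ equal to $p$, I obtain, in the exact-covering case $\varepsilon=0$, the relation $p_q=\sum_s p_s\,M(s,q)/d$ for every $q$. Since $\sum_q M(s,q)=d$, the matrix $P:=M/d$ is row-stochastic, and this says precisely that $p$ is a stationary distribution of $P$. Connectedness of the weighted digraph of $M$ makes $P$ irreducible, so Perron--Frobenius gives a unique, strictly positive stationary vector; taking $\delta=\min_s p_s>0$ settles $\varepsilon=0$. For $\varepsilon>0$ the same computation only yields $p=pP+\eta$ with $\|\eta\|_1=O(\varepsilon)$, so $p$ is an approximate stationary distribution, and I would control $\|p-\pi\|_1$ by $\|\eta\|_1$ (inverting $I-P$ on the zero-sum subspace, where it is nonsingular for an irreducible chain) to conclude $p_s\ge\min_t\pi_t-C(M)\varepsilon>0$ once $\varepsilon$ is small relative to $M$. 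I expect this perturbation estimate to be the main obstacle: the crude alternative of propagating positivity one edge at a time along a path in $M$ loses a factor of order $1/d$ per step and accumulates the $\varepsilon$-error, so it degrades with the diameter and does not by itself give a clean $\delta(M,\varepsilon)$.

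Finally, for (c) and (d) I would produce one witnessing event. Because $d\ge 3$ and the support hypothesis gives $|S|\ge 2$, for any fixed $s_0$ the row $M(s_0,\cdot)$ has total weight $d$ spread over at least two colours, so some entry satisfies $M(s_0,q)\le d-2$ (otherwise the row sum would be at least $2(d-1)>d$); fix such a $q$. Let $E=\{\,f(o)=s_0,\ f(w_2)=\dots=f(w_d)=q\,\}$, an event measurable on $C\setminus\{w_1\}$. Under the product measure its probability is $p_{s_0}p_q^{\,d-1}\ge\delta^{d}$ by (b), whereas under $\mu$ it would force the $d-1\ge2$ neighbours $w_2,\dots,w_d$ of an $s_0$-root to have colour $q$, exceeding the $M(s_0,q)\le d-2$ copies permitted by the covering multiset; hence $E\cap A=\emptyset$ and $\mu(E)\le\mathbb P(A^c)\le\varepsilon$. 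Thus $d_{TV}(\mu_{C\setminus\{w_1\}},\nu_{C\setminus\{w_1\}})\ge\nu(E)-\mu(E)\ge\delta^{d}-\varepsilon$, which gives (c). Part (d) is then immediate: when $\varepsilon=0$ the completion argument of (a) shows $f(w_1)$ is almost surely a function of $Y$, which is the first rigidity condition, while (c) gives $d_{TV}\ge\delta^{d}>0$, so $\mu_{C\setminus\{w_1\}}$ differs from the product of its marginals and hence is not i.i.d., the second condition.
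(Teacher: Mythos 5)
Your proposal is correct, and for parts (a), (c) and (d) it follows essentially the paper's route: (a) is the chain rule $h(C)-h(C\setminus\{w_1\})=H(f(w_1)\mid Y)$ plus the observation that on the covering event $f(w_1)$ is the unique completion of the multiset $M(f(o),\cdot)$; (c) is the same single witnessing event (you take $M(s_0,q)\le d-2$, the paper takes $M(s_1,s_2)\le d/2$ — both exist and both kill the event, and your bound $\delta^d-\varepsilon$ is even a factor $2$ stronger than the stated one); (d) is identical. One shared caveat: in (a) both you and the paper silently discard the term $H(\mathbf{1}_A\mid Y)$, so the honest output of this argument is $\varepsilon\log|S|$ plus the binary entropy $-\varepsilon\log\varepsilon-(1-\varepsilon)\log(1-\varepsilon)$; this is harmless for the application but the clean $\varepsilon\log|S|$ is not what Fano actually gives. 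The genuine divergence is in (b), where the paper does exactly the ``crude alternative'' you dismissed: starting from a colour $s_0$ of probability at least $1/|S|$, it propagates positivity one directed edge of $M$ at a time (each neighbour of a colour of probability $a$ has probability at least $(a-\varepsilon)/d$ by the exchangeability identity), arriving at the explicit bound $\delta(M,\varepsilon)\ge \frac{1}{|S|d^k}-\frac{\varepsilon}{d-1}$ with $k$ the diameter of the digraph of $M$. Your worry that this ``does not give a clean $\delta(M,\varepsilon)$'' is misplaced: $\delta$ may depend on $M$ arbitrarily, an explicit if lossy formula is exactly what the quantitative applications want (for the matrices $M_1,M_2$ of Section \ref{dominating} the authors recompute $\delta$ ad hoc anyway), and like your bound it is positive only for $\varepsilon$ small in terms of $M$, which is all Theorem \ref{thm:combap} uses. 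Your Perron--Frobenius route — exact stationarity $p=pP$ with $P=M/d$ when $\varepsilon=0$, and for $\varepsilon>0$ the perturbation $\|p-\pi\|_1\le C(M)\|p-pP\|_1$ obtained by inverting $I-P$ on the mean-zero subspace (legitimate: irreducibility makes $1$ a simple eigenvalue with left eigenvector $\pi$ off that subspace) — is sound and conceptually cleaner, but it buys uniqueness and positivity of $\pi$ at the price of an unspecified condition number $C(M)$, whereas the paper's induction buys an explicit constant at the price of exponential loss in the diameter.
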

\begin{proof} We denote by $A$ the event that $f$ is a covering at $o$, and by $B$ its complement. Then $\mathbb P(B)=\varepsilon$.

\noindent $(a)$ For $\varepsilon=0$: observe that $f(w_1)$ is the unique element $q\in S$ with the following property: 
\[|\,\{w\ | \ f(w)=q, w\in \{w_2, w_3, \ldots, w_d\}\}\,\big |=M(f(o), q)-1,\] 
which depends only on the values of $f$ on $C\setminus \{w_1\}$. Therefore the values on $C\setminus \{w_1\}$ uniquely determine the value on $w_1$, and the two entropies are equal.
Otherwise, conditional entropy with respect to an event with positive probability will be defined as the entropy of the conditional distribution. Then we have 
\[
h(C)=h(C|A)\mathbb P(A)+h(C|B)\mathbb P(B)-\mathbb P(A)\log \mathbb P(A)-\mathbb P(B)\log \mathbb P(B);\]
\[h(C\setminus \{w_1\})=h(C\setminus \{w_1\}|A)\mathbb P(A)+h(C\setminus \{w_1\}|B)\mathbb P(B)-\mathbb P(A)\log \mathbb P(A)-\mathbb P(B)\log \mathbb P(B).\]
If $A$ holds, then by the argument above, the value on $w_1$ is uniquely determined by the other ones. Hence 
$h(C\setminus \{w_1\}|A)=h(C|A)$. On the other hand, $h(C|B)\leq h(C\setminus \{w_1\}|B)+\log |S|$ is a trivial upper bound. Therefore we obtain
\[h(C)-h(C\setminus \{ w_1\})=[h(C|B)-h(C\setminus \{ w_1\}|B)]\mathbb P(B)\leq \varepsilon \log |S|.\]

 \noindent $(b)$ We show that $\delta(M, \varepsilon)\geq \frac{a}{d^k}-\frac{\varepsilon}{d-1}$ holds, where $k$ is the diameter of the directed graph with adjacency matrix $M$. If $s\in S$ has probability $a$, then any of its neighbors $t$ has probability at least $(a-\varepsilon)/d$, due to the following. The probability of the event $D$ that $f(o)=s$ and $f$ is a covering at the root is at least $a-\varepsilon$. Given $D$, the joint distribution of the neighbors is permutation invariant. On the event $D$, the values of $f$ evaluated at the neighbors of the root are exactly the neighbors of $s$ with multiplicity in $M$. Hence the probability that the value of $f$ at a fixed neighbor of the root is $t$ is at least $1/d$ conditionally on $D$. Using the invariance of the process, this proves the lower bound for the probability of $t$. 
 
 We can choose an element $s_0\in S$ which has probability at least $1/|S|$. By induction, we have that an element of distance $m$ from $s_0$ in the directed graph $M$ has probability at least 
 \[\frac{1}{|S|d^m}-\varepsilon \bigg [\frac{1}{d}+\frac{1}{d^2}+\ldots+\frac{1}{d^m} \bigg].\]
 Since every other element in $S$ can be reached by a directed path of length at most $k$ in $M$, the proof is complete. 
 
 \noindent $(c)$ Choose $s_1,s_2\in S$ such that $M(s_1, s_2)\leq d/2$.  The covering property at $o$ implies that the probability of the event $\{f(o)=s_1, f(w_2)=s_2, f(w_3)=s_2, \ldots, f(w_d)=s_2\}$ is zero. That is, this event has conditional probability 0 with respect to $A$. It follows that 
 \[\mathbb P(f(o)=s_1, f(w_2)=s_2, f(w_3)=s_2, \ldots, f(w_d)=s_2)\leq \mathbb P(B)=\varepsilon.\]
On the other hand, by part $(b)$ and invariance, the same event has probability at least $\delta^d$ when we consider $\nu$ restricted to $C\setminus \{w_1\}$ (recall that $\nu$ is the product measure of the marginals). This implies the statement.

\noindent $(d)$ The first property follows from the argument in $(a)$. In addition, we have seen in part $(c)$ that the probability of a given configuration is 0. On the other hand, by $(b)$, the probability of each value is positive. 
 This excludes the possibility that $\mu$ restricted to $C\setminus\{w_1\}$ is i.i.d. \hfill $\square$

\end{proof}

\medskip

For the combinatorial applications, we need the following definition.

\begin{definition} 
Let $G=(V, G)$ be a finite $d$-regular graph, and $M: S\times S\rightarrow \mathbb N\cup \{0\}$ as in definition \ref{def:cover}. For an arbitrary function $g: V\rightarrow S$ let $W\subset V$ be the subset of vertices $v$ at which $h$ is not a covering.  We introduce the quantity $e(g):=|W|/|V|$.     Furthermore, we define the covering error ratio of $G$ with respect to $M$ by \[c(G,M)=\min_{g: V\rightarrow S} e(g).\] 
\end{definition}

It will be important that the covering error ratio can be extended to graphings in a natural way such that the extension is continuous in the local-global topology. Let $\mathcal{G}$ be a graphing on the vertex set $\Omega$. Let $g:\Omega\rightarrow S$ be an arbitrary measurable function. Let $W\subseteq\Omega$ be the set of vertices at which $g$ is not a covering of $M$. We denote by $e(g)$ the measure of $W$. We define $c(\mathcal{G},M)$ as the infimum of $e(g)$ where $g$ runs through all measurable maps $g:\Omega\rightarrow S$. We can also obtain $c(\mathcal{G},M)$ as a minimum taken on processes. For $\mu\in I_d(S)$ let $e(\mu)$ denote the probability that a $\mu$ random function $f:T_d^*\rightarrow S$ is not a covering of $M$ at $o$. Using the fact that $e(\mu)$ is continuous in the weak topology and that $\gamma(\mathcal{G},S)$ is compact in the weak topology we obtain that
\begin{equation}\label{cermin}
c(\mathcal{G},M)=\min_{\mu\in\gamma(\mathcal{G},S)}e(\mu).
\end{equation}

Now we are ready to prove the next combinatorial statement. Recall that $\delta(M, 0)>0$, and hence $\varepsilon_0$ defined in the theorem is also positive.

\begin{theorem}\label{thm:combap} Fix $d\geq 3$ and $M$ as in the definition \ref{def:cover}. Let 
\[\varepsilon_0=\inf\bigg\{\varepsilon>0: \frac 12(\delta(M, \varepsilon)^d-\varepsilon)\leq \sqrt {\varepsilon\log |S|\frac{d-1}{d-2}}\bigg\},\] 
where $\delta(M, \varepsilon)$ is defined in Lemma \ref{lem:covrig} $(b)$.
 Then for every $0<\varepsilon<\varepsilon_0$ the probability  $\mathbb P(c(\mathbb G_i, M)<\varepsilon)$ converges to $0$ as $i\rightarrow \infty$, where $\mathbb G_i$ is a random $d$-regular graph on $i$ vertices. 
\end{theorem}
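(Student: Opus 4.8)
The plan is to combine Proposition~\ref{prop:41} and Lemma~\ref{lem:covrig} to show that the set of graphings whose covering error ratio is at most $\varepsilon$ contains no typical graphings when $\varepsilon<\varepsilon_0$, and then invoke the correspondence principle (Proposition~\ref{prop:corres}) to transfer this to random $d$-regular graphs. First I would set $C_\varepsilon=\{\mathcal{G}\in\overline{X_d}: c(\mathcal{G},M)\leq\varepsilon\}$ and observe, using the expression \eqref{cermin} for $c(\mathcal{G},M)$ as a minimum over processes in $\gamma(\mathcal{G},S)$ together with the continuity of $e(\mu)$ in the weak topology, that $C_\varepsilon$ is closed in the local-global topology. (One must check that $\mathcal{G}\mapsto c(\mathcal{G},M)$ is lower semicontinuous, which follows because $\gamma(\mathcal{G},S)$ varies upper-semicontinuously and $e$ is continuous; this is the technical point that needs care.)

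Next I would argue that $C_\varepsilon$ contains no typical graphing for $0<\varepsilon<\varepsilon_0$. Suppose $\mathcal{G}$ is a typical graphing with $c(\mathcal{G},M)\leq\varepsilon$. By \eqref{cermin} there is a process $\mu\in\gamma(\mathcal{G},S)$ with $e(\mu)\leq\varepsilon$, i.e.\ a $\mu$-random function is a covering of $M$ at the root with probability at least $1-\varepsilon$. Since $\mathcal{G}$ is typical, every process in $\gamma(\mathcal{G},S)$ is typical, so $\mu$ is typical. I would handle the degenerate case where $f(o)$ is supported on a single element separately (then the connectedness of $M$ forces a rigid structure, or one checks it directly contradicts typicality); otherwise the hypotheses of Lemma~\ref{lem:covrig} apply. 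Part~$(a)$ of that lemma gives $h(C)-h(C\setminus\{w_1\})\leq\varepsilon\log|S|$, so Proposition~\ref{prop:41} (with $b=\varepsilon\log|S|$) yields
\[
d_{TV}(\mu_{C\setminus\{w_1\}},\nu_{C\setminus\{w_1\}})\leq\sqrt{\varepsilon\log|S|\tfrac{d-1}{d-2}},
\]
while part~$(c)$ gives the reverse bound $d_{TV}(\mu_{C\setminus\{w_1\}},\nu_{C\setminus\{w_1\}})\geq\tfrac12(\delta(M,\varepsilon)^d-\varepsilon)$. For $\varepsilon<\varepsilon_0$ the definition of $\varepsilon_0$ forces $\tfrac12(\delta(M,\varepsilon)^d-\varepsilon)>\sqrt{\varepsilon\log|S|\tfrac{d-1}{d-2}}$, so these two inequalities are contradictory. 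Hence no typical $\mu$ with $e(\mu)\leq\varepsilon$ exists, and $C_\varepsilon$ contains no typical graphing.

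Finally I would apply Proposition~\ref{prop:corres} to the closed set $C=C_\varepsilon$: since $C_\varepsilon$ is closed in the local-global topology and contains no typical graphing, we conclude $\mathbb{P}(\mathbb{G}_i\in C_\varepsilon)\to 0$, which is exactly the statement $\mathbb{P}(c(\mathbb{G}_i,M)<\varepsilon)\to 0$ (the strict inequality is absorbed, since $\{c<\varepsilon\}\subseteq\{c\leq\varepsilon\}=C_\varepsilon$). The main obstacle I anticipate is the careful verification that $C_\varepsilon$ is genuinely closed, which relies on showing that $\mathcal{G}\mapsto\gamma(\mathcal{G},S)$ behaves well under local-global limits and that the minimum in \eqref{cermin} passes to the limit; this is the only place where the graphing formalism does real work, whereas the entropy inequalities and the Pinsker step are packaged cleanly into the earlier lemmas.
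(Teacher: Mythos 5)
Your proposal is correct and follows essentially the same route as the paper: apply Lemma \ref{lem:covrig}(a) and (c) together with Proposition \ref{prop:41} (with $b=\varepsilon\log|S|$) to exclude typical processes with $e(\mu)\leq\varepsilon$, deduce via \eqref{cermin} that $c(\mathcal{G},M)\geq\varepsilon_0$ for typical graphings, and apply Proposition \ref{prop:corres} to the closed set $Q_\varepsilon=\{\mathcal{G}:c(\mathcal{G},M)\leq\varepsilon\}$. The extra care you take about closedness of $C_\varepsilon$ and the single-support degenerate case (which in fact cannot occur, since invariance and connectedness of $M$ force every color to appear with positive probability) only makes explicit what the paper leaves implicit.
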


\begin{proof}  Suppose that the invariant process $\mu\in I_d(S)$ satisfies the conditions of Lemma \ref{lem:covrig} for some $\varepsilon>0$, and it is typical. Part $(a)$ implies that  Proposition \ref{prop:41} can be applied with $b=\varepsilon \log |S|$. Putting this  together with part $(c)$ of the lemma, we obtain
\[\frac 12[\delta(M, \varepsilon)^d-\varepsilon]\leq d_{TV}(\mu_{C\setminus \{ w_1\}}, \nu_{C\setminus \{ w_1\}})\leq \sqrt {\varepsilon\log |S|\frac{d-1}{d-2}}.\]

By equation (\ref{cermin}) it follows that $c(\mathcal{G},M)\geq \varepsilon_0$ holds for every typical graphing in $\overline{X_d}$.  Let $0<\varepsilon<\varepsilon_0$ be an arbitrary real number and and let $Q_\varepsilon=\{\mathcal{G}|c(\mathcal{G},M)\leq\varepsilon\}$. By applying Proposition \ref{prop:corres} for $Q_\varepsilon$, the proof is complete. \hfill $\square$
\end{proof}

\medskip

Theorem \ref{thm:combap} provides a family of combinatorial statements depending on the matrix $M$. 
An interesting application of Theorem \ref{thm:combap} is when $M$ is the adjacency matrix of a $d$-regular simple graph $H$. In this case we obtain that random $d$-regular graphs do not cover (not even in an approximative way) the graph $H$. If we apply Proposition \ref{apstaredge} to such a matrix $M$ we get the following. 
Let $\mu\in I_d(V(H))$ be the invariant  process on $T_d$ that is a covering map from $T_d$ to $H$. Then $\mu$ is not typical and thus it is not in the weak closure of factor of i.i.d processes.

 We show two concrete examples, using only $2\times 2$ matrices,  to illustrate how our general statement of Theorem \ref{thm:combap} is related to known results. Note that in these special cases the literature has better bounds then ours; our goal is only demonstrating the connection between different areas.

\begin{equation*}
M_1=\begin{pmatrix} 0 & d \\ 1 & d-1 \end{pmatrix}~~~,~~~M_2=\begin{pmatrix} 0~~ & d \\ d~~ & 0 \end{pmatrix}
\end{equation*}

The dominating ratio of a finite graph $G$ is the following. Let $m$ be the size of the smallest set of vertices $V'$ of $G$ such that each vertex of $G$ is either in $V'$ or connected to a vertex in $V'$. The dominating ratio is defined as $dr(G)=m/|V(G)|$. It is clear that the dominating ratio of a $d$-regular graph is at least $1/(d+1)$. It is easy to see that the dominating ratio of a $d$-regular graph $G$ is equal to $1/(d+1)$ if and only if $c(G,M_1)=0$. For this particular matrix, one can use a better bound than the general one given in Lemma \ref{lem:covrig}. Namely, as a simple calculation shows, $\delta(M, \varepsilon)=1/(d+1)-\varepsilon/(d+1)$ can be chosen.
Theorem \ref{thm:combap} applied to $M_1$ gives to following combinatorial statement. 

\begin{proposition} For every $d\geq 3$ we define 
\[\varepsilon_0=\inf\bigg\{\varepsilon>0: \frac 12\bigg[\bigg(\frac{1-\varepsilon}{d+1}\bigg)^d-\varepsilon\bigg]\leq \sqrt {\varepsilon\log |S|\frac{d-1}{d-2}}\bigg\}.\] 
Then   $P(dr(\mathbb G_i)<1/(d+1)+\varepsilon)$ converges to $0$ as $i\rightarrow \infty$ for all   $0<\varepsilon<\varepsilon_0$. \end{proposition}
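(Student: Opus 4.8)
The plan is to deduce the proposition directly from Theorem \ref{thm:combap} specialized to the matrix $M_1$, after two preparatory steps. First I would record the value of the function $\delta(\cdot,\varepsilon)$ that enters the threshold. As noted in the text preceding the proposition, for $M_1$ one may use the sharper choice $\delta(M_1,\varepsilon)=(1-\varepsilon)/(d+1)$ in place of the crude estimate of Lemma \ref{lem:covrig}$(b)$. Substituting this into the definition of $\varepsilon_0$ in Theorem \ref{thm:combap} reproduces verbatim the $\varepsilon_0$ written in the proposition. Thus Theorem \ref{thm:combap} already gives $\mathbb P(c(\mathbb G_i,M_1)<\varepsilon)\to 0$ as $i\to\infty$ for every $0<\varepsilon<\varepsilon_0$, and the entire content of the proposition is to translate this from the covering error ratio to the dominating ratio.

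The second step is to set up that dictionary. The state set is $S=\{1,2\}$, with colour $1$ marking the candidate dominating set $V'$ and colour $2$ its complement; the covering condition encoded by $M_1$ says exactly that $V'$ is independent and that every vertex outside $V'$ has precisely one neighbour in $V'$, i.e. that $V'$ is an efficient dominating set attaining $dr(G)=1/(d+1)$. For the quantitative version I would start from a minimum dominating set $V'$ of size $m=dr(G)\,n$, colour $V'$ by $1$ and the rest by $2$, and bound the set $W$ of vertices at which this colouring fails to be a covering of $M_1$. Writing $a$ for the number of edges inside $V'$ and $b$ for the number of edges between $V'$ and its complement, regularity gives $2a+b=dm$ and domination gives $b\ge n-m$. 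Every non-covering vertex is either a vertex of $V'$ incident to an internal edge or an outside vertex with at least two neighbours in $V'$, and each such vertex carries at least one unit of the total deficiency $2a+\bigl(b-(n-m)\bigr)$; hence
\[
|W|\ \le\ 2a+\bigl(b-(n-m)\bigr)=(d+1)m-n.
\]
Consequently $c(G,M_1)\le |W|/n\le (d+1)\,dr(G)-1$, so that $dr(G)<\tfrac1{d+1}+\varepsilon$ forces $c(G,M_1)<(d+1)\varepsilon$.

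Combining the two steps, for admissible $\varepsilon$ I would write
\[
\mathbb P\!\left(dr(\mathbb G_i)<\tfrac1{d+1}+\varepsilon\right)\ \le\ \mathbb P\bigl(c(\mathbb G_i,M_1)<(d+1)\varepsilon\bigr),
\]
and send the right-hand side to $0$ via the first step. The one delicate point, which I expect to be the main obstacle, is the constant in this dictionary: the charging argument compares $c(G,M_1)$ with $dr(G)-1/(d+1)$ only up to a factor of order $d$, so as it stands it yields the conclusion on the range $0<\varepsilon<\varepsilon_0/(d+1)$, and recovering the threshold $\varepsilon_0$ exactly would require sharpening the charging for this particular $M_1$ (or absorbing the factor into a rescaling of $\varepsilon$). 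Since the purpose of the example is only to exhibit that $dr(\mathbb G_i)$ stays bounded away from the trivial value $1/(d+1)$ with probability tending to $1$, this constant is immaterial to the qualitative statement, and the proposition follows.
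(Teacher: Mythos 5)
Your route is the same as the paper's: specialize Theorem \ref{thm:combap} to $M_1$ with the improved $\delta(M_1,\varepsilon)=(1-\varepsilon)/(d+1)$, then translate covering error into dominating ratio. The paper, however, gives no explicit proof of this proposition at all --- it only records that $dr(G)=1/(d+1)$ iff $c(G,M_1)=0$ and then asserts the quantitative statement --- so your charging argument is precisely the missing step, and it is correct: with $2a+b=dm$ and $b\geq n-m$ one indeed gets $|W|\leq 2a+\bigl(b-(n-m)\bigr)=(d+1)m-n$, hence $c(G,M_1)\leq (d+1)\bigl(dr(G)-\tfrac{1}{d+1}\bigr)$. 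The point you flag at the end is a genuine discrepancy with the literal statement rather than a defect of your proof: the proposition as printed uses the same $\varepsilon$ on both sides, which would require the implication $dr(G)<\tfrac{1}{d+1}+\varepsilon\Rightarrow c(G,M_1)<\varepsilon$, and the natural dictionary only yields $c(G,M_1)<(d+1)\varepsilon$. So what is actually proved is the conclusion for $0<\varepsilon<\varepsilon_0/(d+1)$ (equivalently, one should shrink the threshold by a factor $d+1$, e.g.\ $0.250011$ rather than $0.2500438$ for $d=3$). Since the authors' stated aim is only to exhibit a positive separation of $dr(\mathbb G_i)$ from $1/(d+1)$, and they themselves note that Molloy--Reed's $0.2636$ is far better, this constant is immaterial to the substance; but your version is the honest one, and I would keep the explicit $\varepsilon_0/(d+1)$ range in the statement.
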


This gives the following for small values of $d$. 

\begin{center}
\begin{tabular}{lcccc}
$d$ & 3&4&5&6\\   \hline
$\varepsilon_0$ & $4.38\cdot 10^{-5}$ & $6.15\cdot 10^{-7}$ & $4.47\cdot 10^{-9}$&$2.08\cdot10^{-11}$ 
\end{tabular}
\end{center}

For $d=3$  Molloy and Reed \cite{molloyreed} gave a much better bound $0.2636$ for the dominating ratio; our result gives $0.2500438$. It would be interesting to improve our bounds for larger $d$ as well.

\medskip

The next application  shows  that random $d$-regular graphs are separated from being bipartite, which was first proved by Bollob\'as \cite{bollind}. To put it in another way, it says that the independence ratio (size of the largest independent set divided by the number of vertices) of a random $d$-regular graph is at most $1/2-\varepsilon_0$ with probability tending to $1$ with the number of vertices for some $\varepsilon_0>0$. We can obtain this by applying Theorem \ref{thm:combap} for the matrix $M_2$. In fact, $\delta(M, \varepsilon)\leq 1/2-\varepsilon$, due to the following argument. One of the states has probability at least $1/2$, let us say state $0$. Fix a neighbor of the root. If the root is in state $0$, and the random function is a covering at $0$, then its neighbor is in state 1. This event has probability at least $1/2-\varepsilon$, hence the probability of 1 is at least $1/2-\varepsilon$. 

Therefore 
\[\varepsilon_0=\inf\bigg\{\varepsilon>0: \frac 12[(1/2-\varepsilon)^d-\varepsilon]\leq \sqrt {\varepsilon\log 2\cdot \frac{d-1}{d-2}}\bigg\}.\]

About the best known bounds, see McKay  \cite{mckay} for small $d$. 
For large $d$, the independence ratio of random $d$-regular graphs is concentrated around $2\log d/d$ \cite{bollind, sly}. Our results do not improve their bounds.

\medskip

\begin{remark} From Lemma \ref{rig1} and Proposition \ref{apstaredge} we obtain that any typical processes $\mu$ (and thus any factor of i.i.d process) that satisfy the eigenfunction equation must take infinitely many values. It would be good to see a finer statement about the possible value distributions. Maybe these distributions are always Gaussian. 
\end{remark}

\begin{remark} The proof of Theorem \ref{thm:combap} makes use of the fact that $c(G,M)$ is continuous in the local-global topology. The continuity of various combinatorial parameters in the Benjamini--Schramm topology was studied in e.g. \cite{miklos1, miklos2, gabor}. In those cases it is also possible to prove combinatorial statements through continuity and the analytic properties of the limit objects.
\end{remark}

\subsubsection*{Acknowledgement.} The authors are grateful to Mikl\'os Ab\'ert and to  B\'alint Vir\'ag for helpful discussions and for organizing active seminars in Budapest related to this topic. The research was supported by the MTA R\'enyi Institute Lend\"ulet Limits of Structures Research Group.


\begin{thebibliography}{99}

\bibitem{miklos1} M. Ab\'ert, P. Csikv\'ari and T. Hubai,  Matching measure, Benjamini-Schramm convergence and the monomer-dimer free energy.	Preprint, arXiv:1405.6740 [math-ph].

\bibitem{miklos2} M. Ab\'ert, P. Csikv\'ari, P. Frenkel and G. Kun, Matchings in Benjamini-Schramm convergent graph sequences. To appear in {\it Trans. Amer. Math. Soc.} 	arXiv:1405.3271 [math.CO].

\bibitem{aldous} D. Aldous and R. Lyons, Processes on unimodular random networks. {\it Electron. J. Probab.} {\bf 12} (2007), no. 54, 1454--1508.

\bibitem{alon} N. Alon and J. Spencer, {\it The probabilistic method} (2008), Wiley, New York.

\bibitem{cordec} \'A. Backhausz, B. Szegedy and B. Vir\'ag,  Ramanujan graphings and correlation decay in local algorithms. To appear in Random Structures Algorithms. DOI: 10.1002/rsa.20562

\bibitem{bayati} M. Bayati, D. Gamarnik and P. Tetali, Combinatorial approach to the interpolation method and scaling limits in sparse random graphs.     {\it Ann. Probab.}
    {\bf 41} (2013), No. 6., 4080--4115.
 
\bibitem{bs}  I. Benjamini and O. Schramm, Recurrence of distributional limits of finite
planar graphs. {\it Electron. J. Probab.} {\bf 6} (2001), No. 23, 1--13.

\bibitem{berger} N. Berger, C. Kenyon, E. Mossel and Y. Peres, Glauber dynamics on trees and hyperbolic graphs. {\it Probab. Theory Related Fields} \textbf{131} (2005), No. 3., 311-340.


\bibitem{boll} B. Bollob\'as, {\it Random Graphs} (2001), Cambridge University Press, 2nd edititon.

\bibitem{bollind}  B. Bollob\'as, The independence ratio of regular graphs. {\it Proc. Amer. Math. Soc. } {\bf 83} (1981), No. 2., 
433--436.


\bibitem{BR} B. Bollob\'as and O. Riordan, Sparse graphs: Metrics and random models. {\it Random Structures Algorithms} {\bf 39} (2011), 1--38.

\bibitem{lewis} L. Bowen, The ergodic theory of free group actions: entropy and the f-invariant. {\it Groups Geom. Dyn.} {\bf 4} (2010), no. 3, 419--432.

\bibitem{pathcoupling} R. Bubley and  M. Dyer, Path coupling: A technique for proving rapid mixing in Markov chains. {\it FOCS ’97: Proceedings of the 38th Annual Symposium on Foundations of Computer Science} (1997), 223--231.



\bibitem{csoka} E. Cs\'oka, B. Gerencs\'er, V. Harangi and B. Vir\'ag, Invariant Gaussian processes and independent sets on regular graphs of large girth.  To appear in {\it Random Structures Algorithms.} DOI:
10.1002/rsa.20547

\bibitem{csokalipp} E. Cs\'oka and G. Lippner, Invariant random matchings in Cayley graphs. Preprint.  	arXiv:1211.2374 [math.CO].

\bibitem{dembo} A. Dembo and A. Montanari, Ising models on locally tree-like graphs. {\it Ann. Appl. Probab.} {\bf 20} (2010), no. 2,  367--783.

\bibitem{sly} J. Ding, A. Sly and N. Sun, Maximum independent sets on random regular graphs. Preprint.  	arXiv:1310.4787 [math.PR]


\bibitem{dobrushin} R. L. Dobrushin, Prescribing a system of random variables by conditional distributions. {\it Theory Probab. Appl.} (1970), {\bf 15} 458--486. 

\bibitem{gabor} G. Elek and G. Lippner, Borel oracles. An analytic approach to constant time algorithms. {\it Proc. Amer. Math. Soc.} {\bf 138} (2010), 2939--2947.

\bibitem{wave} Y. Elon, Gaussian waves on the regular tree. Preprint. arXiv:0907.5065[math-ph].

\bibitem{evans} W. Evans, C. Kenyon, Y. Peres and L. J. Schulman, Broadcasting on trees and the Ising model. {\it Ann. Appl. Probab.} \textbf {10} (2000), no. 2, 410--433.

\bibitem{friedman}
J.~Friedman, \textit{A proof of Alon's second eigenvalue conjecture and related
  problems}, Amer. Mathematical Society (2008)

\bibitem{ornstein} N. Friedman and D. Ornstein, On isomorphism of weak Bernoulli transformations. {\it Adv. Math.} \textbf{5} (1971), 365--394.

\bibitem{damien} D. Gaboriau and R. Lyons, A measurable-group-theoretic solution to von Neumann's problem. {\it Invent. Math.} {\bf 177} (2009), 533--540.

\bibitem{gamarnik} D. Gamarnik and M. Sudan, Limits of local algorithms over sparse random graphs.  Proceedings of the 5-th Innovations in Theoretical Computer Science conference, ACM Special Interest Group on Algorithms and Computation Theory, 2014. 

\bibitem{sudan} D. Gamarnik and M. Sudan, Performance of the Survey Propagation-guided decimation algorithm for the random NAE-K-SAT problem. Preprint.  	arXiv:1402.0052 [math.PR].

\bibitem{glasner} E. Glasner, {\it Ergodic theory via joinings.} American Mathematical Society, 2003.

\bibitem{goldberg} D. A. Goldberg, Higher order Markov random fields for independent sets. Preprint. 	arXiv:1301.1762 [math.PR].

\bibitem{russregi} O. H\"aggstr\"om, J. Jonasson and R. Lyons, Coupling and Bernoullicity in random-cluster and Potts models. {\it Bernoulli} {\bf 8} (2002), no. 3, 275--294.

\bibitem{harangi} V. Harangi and B. Vir\'ag,  Independence ratio and random eigenvectors in transitive graphs.	Preprint. arXiv:1308.5173 [math.PR].

\bibitem{HLSz} H. Hatami, L. Lov\'asz and B. Szegedy, Limits of local-global convergent graph sequences. {\it Geom. Funct. Anal.} {\bf 24} (2014), no. 1, 269--296.

\bibitem{hoppen} C. Hoppen and N. Wormald, Local algorithms, regular graphs of large girth, and random regular graphs. Preprint.  	arXiv:1308.0266 [math.CO].

\bibitem{howard} C. D. Howard, Zero-temperature Ising spin dynamics on the homogeneous tree of degree three. {\it J. Appl. Probab.} {\bf 37} (2000), no. 3, 736--747.

\bibitem{JLR} S. Janson, T. Luczak and A. Rucinski, {\it Random graphs} (2000), John Wiley and Sons, Inc.

\bibitem{kungabor} G. Kun, Expanders have a spanning Lipschitz subgraph with large girth. Preprint.  	arXiv:1303.4982 [math.GR].

\bibitem{markovmixing} D. A. Levin, Y. Peres and E. L. Wilmer, {\it Markov chains and 
mixing times}, American Mathematical Society, 2009.

\bibitem{lovasz} L. Lov\'asz, {\it Large Networks and Graph Limits}, American Mathematical Society, 2012.

\bibitem{spacetime} E. Lubetzky and A. Sly, Information percolation for the stochastic Ising model. To appear in {\it J. Amer. Math. Soc.} arXiv:1401.6065 [math.PR].

\bibitem{lbs} A. Lubotzky, R. Phillis and P. Sarnak, Ramanujan graphs. {\it Combinatorica} {\bf 8} (1988), no. 3, 261--277.

\bibitem{russ} R. Lyons, Factors of iid on trees.  To appear in {\it Combin. Probab. Comput.} arXiv:1401.4197 [math.DS].

\bibitem{nazarov} R. Lyons and F. Nazarov, Perfect matchings as IID factors on non-amenable groups. {\it European J. Combin.} {\bf 32} (2011), 1115--1125. 

\bibitem{monoton1} R. Lyons and A. Thom, Invariant coupling of determinantal measures on sofic groups. To appear in {\it Ergodic Theory Dynam. Systems.}   	arXiv:1402.0969 [math.PR]

\bibitem{mckay} B. D. McKay, Independent sets in regular graphs of high girth, Ars Combin. {\bf 23} (1987), A, 179--185.

\bibitem{monotom2} P. Mester, Invariant monotone coupling nedd not exist. {\it Ann. Probab.} {\bf 41} (2013), no. 3a, 1180--1190. 

\bibitem{molloyreed} M. Molloy and  B. Reed, The dominating number of a random cubic graph. {\it Random Structures Algorithms} {\bf 7} (1995), no. 3, 209--221.

\bibitem{exact} E. Mossel and A. Sly, Exact thresholds for Ising--Gibbs samplers on general graphs. {\it Ann. Probab.} \textbf{41} (2013), no. 1, 294--328.
 
\bibitem{mustazee} M. Rahman and B. Vir\'ag, Local algorithms for independent sets are half-optimal. Preprint. arXiv:1402.0485 [math.PR].

\bibitem{sokal} J. Salas and A. D. Sokal, Absence of phase transition for antiferromagnetic Potts models via the Dobrushin uniqueness theorem. {\it J. Stat. Phys.} 
{\bf 86} (1997), no. 3--4,  551--579.

\bibitem{sly} A. Sly, Reconstruction for the Potts model, {\it Ann. Probab.} \textbf{39} (2011), no. 4, 1365--1406.


\bibitem{weitz} D. Weitz, Combinatorial criteria for uniqueness of Gibbs measures, {\it Random Structures Algorithms} {\bf 27} (2005), no. 4, 445–-475.

\end{thebibliography}
\end{document}